\newcommand{\p}{\mathbb{P}} 
\newcommand{\F}{\mathcal{F}}
\newcommand{\N}{\mathbb{N}}
\newcommand{\C}{\mathbb{C}}
\newcommand{\LD}{\mathbb{L}}
\newcommand{\vu}{\varnothing}
\newcommand{\I}{\mathcal{I}}
\newcommand{\E}{\mathcal{E}}
\newcommand{\X}{\mathcal{X}}
\newcommand{\A}{\mathbb{A}}
\newcommand{\LL}{\mathcal{L}}
\newcommand{\M}{\mathcal{M}}
\newcommand{\oo}{\mathcal{O}}
\newcommand{\mul}{\mathop{\rm mult}\nolimits}
\newcommand{\Bs}{\mathop{\rm Bs}\nolimits}
\newcommand{\Bl}{\mathop{\rm Bl}\nolimits}
\newcommand{\Span}[1]{\langle#1\rangle}
\newcommand{\expdim}{\mathop{\rm edim}\nolimits}
\newcommand{\vdim}{\mathop{\rm vdim}\nolimits}
\newcommand{\h}{\mathop{\rm h}\nolimits}
\theoremstyle{plain}
\newtheorem{thm}{Theorem}
\newtheorem{pro}[thm]{Proposition}
\newtheorem{lem}[thm]{Lemma}   
\newtheorem{cor}[thm]{Corollary}
\theoremstyle{definition}
\newtheorem{construction}[thm]{Construction}
\newtheorem{obs}[thm]{Observation}
\newtheorem{es}[thm]{Example}
\newtheorem{con}[thm]{Conjecture}
\newtheorem{dfn}[thm]{Definition}
\newtheorem{rmk}[thm]{Remark}
\newcommand{\mult}[0]{\operatorname{mult}}
\begin{document}

\title[Collisions of fat points and interpolation theory]{Collisions of fat points and\\ applications to interpolation theory}

\author{Francesco Galuppi}
\address{Max Planck Institute for Mathematics in the Sciences\\
Inselstra\ss{}e 22\\
04103 Leipzig\\
Germany}
\email{galuppi@mis.mpg.de}
\thanks{}
\begin{abstract}
  We address the problem to determine the flat limit of the collision of fat points in $\p^n$. We give a description of the limit scheme in many cases, in particular in low dimension and multiplicities.  
  The problem turns out to be closely related with interpolation theory, and as an application we exploit collisions to prove new cases of Laface-Ugaglia Conjecture.
\end{abstract}
\maketitle	

The study of linear systems on projective varieties is an important branch of algebraic geometry, and it is studied since the beginning of 20th century. Despite the efforts of many mathematicians, there is still much we do not know.
Interpolation theory deals with linear systems of divisors passing through a bunch of fixed points with prescribed multiplicities. Given such a linear system, it is natural to ask its dimension, and sometimes the na\"{\i}f parameter counting does not give the correct answer. A challenging problem in interpolation theory is the study of special linear systems, that is, systems having larger dimension than the expected one. In general, this is widely open, even for systems of plane curves.
A standard approach is to consider degenerations. By semicontinuity, a degeneration cannot decrease the dimension of a system, so, if the degenerated system is non-special, then the original one is non-special as well. Typically, it is convenient to degenerate some of the assigned base points to a special configuration, for instance by sending them on a hyperplane in order to apply induction arguments. Sometimes it can be useful to allow points not only to be in special position, but also to collide to the same point. This idea was introduced by Evain in \cite{EvainIdea}, in order to study the dimension of systems of plane curves.

The degenerated linear system features a new singular base point, which is the limit of the collision. Hence this degeneration strategy is useful only if we fully understand the limit scheme. This raises a fairly natural question, which is of interest in itself.
\vskip4pt
	{\bf Question.} Given $n,m_1,\dots,m_h\in\N$, what is the flat limit of $h$ colliding fat points of multiplicities $m_1,\dots,m_h$ in $\p^n$? Or more generally on a smooth $n$-dimensional variety?
\vskip4pt
While it is easy to ask, this question has not a simple answer. Results in \cite{CM3} and in \cite{Ne} show the lack of a clean and complete solution even if $n = 2$.
Since the beginning of this work, we realized that there is a nice interplay between collisions of fat points and interpolation theory. On one hand, some basic properties of the limit of a collision can be stated in the language of linear systems. On the other hand, with this technique we can afford new ways to degenerate a bunch of singular points, so we have new tools to compute the dimension of linear systems. We believe this connection to be worth of a deep analysis.

Our strategy to describe a collision is quite simple. Thanks to flatness, we know the degree of the limit scheme. First, we compute the multiplicity of the limit. In Proposition \ref{pro:molteplicità_limite} we prove that this is actually a question about linear systems. Once the multiplicity is determined, we try to get more information about this linear system, such as its base locus, in order to get further conditions on the limit. In this way we get a candidate scheme, and we compute its degree. Since this candidate is a subscheme of the limit, if their degree coincide then they are the same scheme.

Once we are able to describe a limit, we can use it to degenerate linear systems. In Section \ref{section:interpolation} we use such degenerations to study some families of linear systems on $\p^2$, $\p^3$, $\p^4$ and $\p^1\times\p^1$, and we compare our results with the known results in interpolation theory. For instance, the Laface-Ugaglia conjecture predicts that the linear system $ L_{3,d}(m^r)$ is non-special for $d>\frac{3}{\sqrt{2}}m+o(m)$. We will prove the following partial result in this direction.
\begin{thm}\label{thm:LUcondgrande}
	If $r\le 15$ and $d\ge 3m$, then $ L_{3,d}(m^r)$ is non-special. In particular, Laface-Ugaglia conjecture holds for these values $d$, $m$ and $r$.
\end{thm}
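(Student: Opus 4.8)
The plan is to prove the cohomological vanishing $h^{1}(\p^{3},\I_{Z}(d))=0$, where $Z=Z_{r}$ denotes the union of $r$ general fat points of multiplicity $m$. For $r\le 15$ and $d\ge 3m$ a direct check gives $\binom{d+3}{3}\ge\binom{3m+3}{3}\ge 20\binom{m+2}{3}>r\binom{m+2}{3}=\deg Z$, so $\chi(\I_{Z}(d))>0$; since moreover $h^{i}(\I_{Z}(d))=0$ for $i\ge 2$, the vanishing $h^{1}=0$ is equivalent to $h^{0}(\I_{Z}(d))=\chi(\I_{Z}(d))$, i.e.\ to the non-speciality of $L_{3,d}(m^{r})$, and it a fortiori settles the cases of the Laface--Ugaglia conjecture under consideration, whose conjectural non-speciality range $d>\tfrac{3}{\sqrt 2}m+o(m)$ lies well inside $d\ge 3m$. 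A first step reduces to the extremal degree $d=3m$: for a general plane $H\subset\p^{3}$, which is disjoint from $\supp Z$, the exact sequence
\[
0\longrightarrow \I_{Z}(t)\longrightarrow \I_{Z}(t+1)\longrightarrow \oo_{H}(t+1)\longrightarrow 0
\]
and $h^{1}(\p^{2},\oo(t+1))=0$ show that $h^{1}(\I_{Z}(t))=0$ implies $h^{1}(\I_{Z}(t+1))=0$; iterating with $t=3m,3m+1,\dots$ handles every $d\ge 3m$. The case $m=1$ is classical ($r\le 15\le\binom{d+3}{3}$ general points impose independent conditions on $\oo_{\p^{3}}(d)$ for $d\ge 3$), so assume $m\ge 2$.

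Next I would degenerate the $r$ general points to a collision and use the description of its flat limit obtained earlier in the paper (starting from the analysis around Proposition~\ref{pro:molteplicità_limite}). Let $W$ be the limit scheme when the $r$ fat points of multiplicity $m$ collide to a point $p$, in the configuration prescribed by that description; in particular its homogeneous ideal is explicit, $\deg W=r\binom{m+2}{3}$, and --- by the classification, available for $r\le 15$ --- $W$ carries no unexpected base locus, so that $\mul_{p}W$ is the value predicted by Proposition~\ref{pro:molteplicità_limite}. By semicontinuity of $h^{0}$ in a flat family --- the degeneration principle recalled in the introduction --- it is enough to prove $h^{1}(\p^{3},\I_{W}(3m))=0$. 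As a first sanity check, $r\binom{m+2}{3}\le 15\binom{m+2}{3}<\binom{3m+3}{3}$ forces $\mul_{p}W\le 3m$, which puts us below the regularity threshold in degree $3m$.

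To prove $h^{1}(\I_{W}(3m))=0$ I would run the Horace method around $W$. If $W$ is supported on a line $\ell$ and $H\supset\ell$ is a plane, the Castelnuovo exact sequence
\[
0\longrightarrow \I_{\operatorname{Res}_{H}W}(3m-1)\longrightarrow \I_{W}(3m)\longrightarrow \I_{\operatorname{Tr}_{H}W,\,H}(3m)\longrightarrow 0
\]
reduces the vanishing to $h^{1}(\p^{2},\I_{\operatorname{Tr}_{H}W}(3m))=0$ --- a non-speciality statement on $\p^{2}$ for the (collision) scheme $\operatorname{Tr}_{H}W$, which follows from the Alexander--Hirschowitz theorem together with the planar results recalled earlier --- and to $h^{1}(\p^{3},\I_{\operatorname{Res}_{H}W}(3m-1))=0$, obtained by descending induction since $\operatorname{Res}_{H}W$ is again (a degeneration of) a fat-point scheme of strictly smaller total degree, with $3m-1$ still in the admissible range after re-reading the multiplicities. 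Peeling off further planes through $\ell$ and finally restricting to $\ell$ reduces everything to elementary statements about divisors on $\p^{1}$. (When $W$ is concentrated at $p$ rather than spread along a line, the same sequences with $H$ a general plane through $p$ do the job, the content being that the classified $W$ imposes independent conditions on $\oo(3m)$ because $3m\ge\mul_{p}W$.)

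The main obstacle is the interplay between the last two steps and the precise shape of $W$: the residual of a collision scheme along a plane is not a union of honest fat points, so making the degree bookkeeping in the Castelnuovo sequences balance requires $\deg\operatorname{Tr}_{H}W$ and $\deg\operatorname{Res}_{H}W$ on the nose --- which is exactly where the explicit classification of limits (hence the bound $r\le 15$) is indispensable, and where the auxiliary systems appearing as traces and residuals must themselves be non-special. I expect the genuine work to concentrate in the borderline sub-cases --- small $m$, or $r$ close to $15$ --- where the Horace balancing is tight and an ad hoc degeneration or direct computation may be needed.
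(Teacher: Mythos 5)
Your overall strategy---collide \emph{all} $r$ points to a single point $p$, identify the flat limit $W$, and then prove $\h^1\I_{W}(3m)=0$ by the Horace method---is not the route the paper takes, and it has a gap that I do not see how to close. The pivotal claim ``its homogeneous ideal is explicit \dots by the classification, available for $r\le 15$'' is unsupported: no classification of the limit of $r\le 15$ colliding $m$-fold points in $\A^3$ exists, in this paper or elsewhere. The results established here describe the limit only in very constrained situations: when $\deg Z_1$ equals the degree of a single fat point and an auxiliary system is non-special (Lemma~\ref{lem:grassisugrasso}), for $n+1$ or $n+2$ colliding double points, and for a handful of sporadic higher-multiplicity configurations. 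For $15$ general $m$-fold points the limit has multiplicity roughly $15^{1/3}m$, its degree $15\binom{m+2}{3}$ is not of the form $\binom{k+3}{3}$, so the limit is a fat point plus infinitely near structure whose shape is precisely the hard unknown; the paper stresses that its base-locus techniques break down in such cases. You have also misread where the bound $r\le 15$ comes from: it is not the range of a classification of collisions, and consequently the subsequent Horace step (whose traces would be large planar collision schemes, far beyond the reach of Alexander--Hirschowitz) rests on nothing.

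The paper's actual argument is much shorter and only ever collides points into a configuration it can control. Reduce to $r=15$; since $\vdim L_{3,d}(m^{15})>m+1$ for $d\ge 3m$, it suffices to show $L_{3,d}(m^{15},1^{m+1})$ is non-special. Now collide only $8$ of the $m$-fold points together with the $m+1$ simple points: by Proposition~\ref{pro:limitediottograssi} (which is an instance of Lemma~\ref{lem:grassisugrasso}, using the identity $8\binom{m+2}{3}+m+1=\binom{2m+3}{3}$ and the non-speciality of $L_{3,2m}(m^8)$) the limit is a single $(2m+1)$-fold point. By semicontinuity it then suffices that $L_{3,d}(2m+1,m^7)$ be non-special, and this system has only $8$ base points, so it falls under the complete classification of De Volder--Laface. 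This is where $15=8+7$ comes from. If you want to salvage your write-up, replace the total collision by this partial collision; the reduction to $d=3m$ and the $m=1$ case in your first paragraph are correct but become unnecessary.
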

Sometimes it is convenient to work out examples with a software. In those cases we use Macaulay2, available at {\tt{www.math.uiuc.edu/Macaulay2}}.

\section{Notations and preliminaries}\label{sec:notations}
The main aim of this section is to provide the setup for the rest of the paper. After establishing the definitions we need about points with multiplicities and linear systems, we recall two important techniques to bound the dimension of a given system, namely restriction and specialization. Classically, the latter roughly consist of moving some of the imposed points to a special position. A variation of the standard specialization is presented in Construction \ref{cons:collapse}, where the points are allowed not only to be moved to a special position, but also to collapse. Proposition \ref{pro:molteplicità_limite} computes the multiplicity of the resulting limit scheme and points out the connection with interpolation theory. Lemma \ref{lem:grassisugrasso} is the prototype of a collision of fat points, and will be useful in the rest of the paper.

We work over the complex field $\C$. Every scheme will be projective, unless we specify it is not. For a scheme $X$ and a closed subscheme $Y\subset X$, we will write $\I_{Y,X}$ to denote the ideal sheaf of $Y$ in $X$. 
If no ambiguity is likely to arise, we will write simply $\I_Y$ instead of $\I_{Y,X}$. If $\F$ is a coherent sheaf on $X$ and $i\in\N$, we will write $H^i\F$ to denote the cohomology group $H^i(X,\F)$ and $\h^i\F$ for its dimension.

\begin{dfn} Let $X$ be a $0$-dimensional scheme. The \textit{degree}, or \textit{length}, of $X$, denoted by $\deg X$, is the dimension of its ring of regular functions as a complex vector space. If $X$ is supported on a point $p$, we define the \textit{multiplicity} of $X$, denoted by $\mul X$, to be the largest $k\in\N$ such that $X$ contains the $k$-tuple point supported on $p$.
\end{dfn}

There is a more general definition of multiplicity. If $X$ is a scheme of any dimension and $Y$ is an irreducible component of $X_{\mbox{\Small{red}}}$, one can define the multiplicity of $X$ at $Y$ as the length of the local ring $\oo_{X,Y}$ (see \cite[Section 1.2.1]{EisenbudHarris}).

If $X\subset\p^n$ is a 0-dimensional subscheme, then $\deg X$ is the limit value of the Hilbert function of $X$. In other words, if $d$ is large enough, then $X$ imposes $\deg X$ independent linear conditions to degree $d$ divisors of $\p^n$. 
Let us recall a basic fact about 0-dimensional schemes.

\begin{lem}\label{stessogrado} \label{basic} Let $X$ be a $0$-dimensional schemes supported at a point, and let $Y$ be a subscheme of $X$. If $\deg Y=\deg X$, then $Y = X$.	
\end{lem}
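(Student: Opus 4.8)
The plan is to reduce the statement to a triviality in commutative algebra. Since $X$ is a $0$-dimensional scheme supported at a point, it is affine and its ring of regular functions $A:=\oo_X$ is a local Artinian $\C$-algebra; in particular $\dim_\C A<\infty$, and by definition $\deg X=\dim_\C A$. The closed subscheme $Y\subseteq X$ is cut out by an ideal $I\subseteq A$, so that $\oo_Y=A/I$ and the restriction $\oo_X\twoheadrightarrow\oo_Y$ is the quotient map. Taking $\C$-dimensions yields
\[
\deg Y=\dim_\C(A/I)=\dim_\C A-\dim_\C I=\deg X-\dim_\C I .
\]

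From here the hypothesis $\deg Y=\deg X$ forces $\dim_\C I=0$, hence $I=0$; therefore $\oo_Y=A/I=A=\oo_X$ and the closed immersion $Y\hookrightarrow X$ is an isomorphism, that is, $Y=X$ as subschemes of the ambient scheme. The only thing that makes the dimension count legitimate is the finiteness of all the vector spaces involved, which is exactly what "$0$-dimensional, supported at a point" guarantees; so there is no genuine obstacle. (If one prefers to avoid choosing the affine picture, the same argument runs with $\oo_X$, $\oo_Y$ replaced by the global sections, which coincide with the stalks at the supporting point.)

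If anything could be called the delicate point, it is merely bookkeeping: one should state clearly at the outset that the degree is the length of the Artinian ring, so that "$\deg Y=\deg X$'' literally means "$\len\oo_Y=\len\oo_X$'', and then the short exact sequence $0\to I\to \oo_X\to \oo_Y\to 0$ and additivity of length finish the proof immediately. I would present it in essentially the two displayed lines above, with a sentence recalling why $\oo_X$ is finite length.
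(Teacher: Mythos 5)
Your argument is correct and is the standard one; the paper itself states this lemma without proof, merely calling it ``a basic fact about $0$-dimensional schemes,'' and the additivity-of-length argument via the exact sequence $0\to I\to\oo_X\to\oo_Y\to 0$ is exactly the justification it implicitly relies on. The only point worth making explicit is that any subscheme of a one-point scheme is automatically closed, so the identification of $Y$ with an ideal $I\subseteq\oo_X$ is legitimate; with that remark your proof is complete.
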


Since we will deal with linear systems with assigned singularities
, we introduce the notations we are going to use.
\begin{dfn} Let $V$ be a smooth quasi-projective variety, let $p_1,\ldots, p_r\in V$.
	The \textit{linear system} 
	$$ L_{V,d}(m_1,\dots,m_r)(p_1
	,\ldots,p_r)\subset H^0\oo_{V}(d)$$ 
	is the vector space of divisors of $V$ having multiplicities at least $m_i$ at the point $p_i$.
	In other words, if $X=m_1p_1+\ldots+m_rp_r\subset V$ is a fat point subscheme, then $$L_{V,d}(m_1,\dots,m_r)(p_1
	,\ldots,p_r)=H^0\I_{X,V}(d).$$
	We will write $\LL_{V,d}(m_1,\dots,m_r)(p_1
	,\ldots,p_r)$ to denote the associated ideal sheaf, that is,
	$$\LL_{V,d}(m_1,\dots,m_r)(p_1
	,\ldots,p_r):=\I_{X,V}(d).$$
	If either the points $p_1,\ldots, p_r$
	 are in general position, or no confusion is likely to arise, then we set
	$$ L_{V,d}(m_1
	,\dots,m_r):= L_{V,d}(m_1,\dots,m_r)(p_1
	,\ldots,p_r).
	$$
	Moreover, if $m_1=\ldots=m_s=m$ then we indicate
	$$ L_{V,d}(m^s,m_{s+1},\dots,m_r):= L_{V,d}(m_1,\dots,m_r).$$
	We will write $ L_{n,d}(m_1,\dots,m_r)$ instead of $ L_{\p^n,d}(m_1,\dots,m_r)$. Finally, we will use $ L_{\p^1\times\p^1,(a,b)}(m_1,\dots,m_r)$
to indicate the system of bidegree $(a,b)$ curves on $\p^1\times\p^1$ with the prescribed singularities.
\end{dfn}

Some authors in the literature consider a linear system as a projective space, so they work with $\p(L_{n,d}(m_1,\dots,m_r))$. The two approaches are equivalent. We only have to be aware that in this paper we work with affine dimensions, rather than projective dimensions.

\begin{dfn}
	Let $n:=\dim V$. The \textit{virtual dimension} of such a linear system is 
	$$\vdim L_{V,d}(m_1
	,\dots,m_r):=\h^0\oo_{V}(d)-\sum_{i=1}^r\binom{m_i-1+n}{n}
	.$$
	The \textit{expected dimension} is defined	as
	$$\expdim L_{V,d}(m_1,\dots,m_r):=\max
	\left\{\vdim L_{V,d}(m_1,\dots,m_r),0\right\},$$
	where expected dimension $0$ indicates that the linear system is expected to be empty.
	Note that 
	$$\dim L_{V,d}(m_1,\dots,m_r)\ge\expdim
	 L_{V,d}(m_1,\dots,m_r).$$
	When the linear conditions imposed by the base points are dependent, then previous inequality is strict, and the linear system is said to be \textit{special}. On the other hand, if the conditions are independent, then $\dim L_{V,d}(m_1,\dots,m_r)=\expdim L_{V,d}(m_1,\dots,m_r)$ and the system is called \textit{non-special}.
\end{dfn}

Not much is known about the classification of special linear systems $ L_{n,d}(m_1,\dots,m_r)$ for an arbitrary $n$. The most important result in this direction is the celebrated Alexander-Hirschowitz theorem, proven in \cite{AH}, that solves the problem for systems with general double points.

\begin{thm}[Alexander-Hirschowitz] \label{thm:AH} The linear system $ L_{n,d}(2^h)$ is special if and only if $(n,d,h)$ is one of the following:
	\begin{itemize}
		\item[i)] $(n,2,h)$ with $2\leq h\leq n$,
		\item[ii)] $(2,4,5)$, $(3,4,9)$, $(4,3,7)$, $(4,4,14)$.
	\end{itemize}
\end{thm}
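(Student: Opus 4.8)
The statement is the Alexander--Hirschowitz theorem, an equivalence, so I would prove the two implications separately, and I would begin by recasting non-speciality in a form that makes the inductive machinery available. By definition $L_{n,d}(2^h)$ is non-special if and only if the $h$ general double points impose independent conditions on $\oo_{\p^n}(d)$, i.e. the evaluation map $H^0\oo_{\p^n}(d)\to\bigoplus_{i=1}^h\oo_{2p_i}$ has maximal rank. By Terracini's lemma this is equivalent to the $h$-th secant variety $\sigma_h(v_d(\p^n))$ of the $d$-uple Veronese having its expected dimension, and the four sporadic triples together with the quadric family are exactly the defective secant varieties. I would record this dictionary at the outset, since the exceptional cases are most transparent on the Veronese side while the general non-speciality is cleanest on the linear-systems side.

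For the \emph{only if} direction I would exhibit a special system in each listed case by an explicit construction. In case (i), $d=2$ and $2\le h\le n$: the singular locus of a quadric is linear, so a quadric singular at general points $p_1,\dots,p_h$ is automatically singular along their span $\Lambda\cong\p^{h-1}$; hence the double points fail to impose independent conditions and $\dim L_{n,2}(2^h)>\vdim$. In case (ii): for $(2,4,5)$ the unique conic $C$ through five general points gives a double conic $2C\in L_{2,4}(2^5)$, so the system is nonzero while $\vdim=0$; for $(3,4,9)$ the unique quadric $Q$ through nine general points of $\p^3$ gives $2Q\in L_{3,4}(2^9)$ while $\vdim=-1$; and the remaining triples $(4,3,7)$ and $(4,4,14)$, where $\vdim=0$ yet the system is nonempty, correspond to the two classically known defective secant varieties of $v_3(\p^4)$ and $v_4(\p^4)$, for which I would quote the explicit quadric and Veronese constructions. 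These verifications are short and self-contained.

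For the \emph{if} direction I would prove non-speciality for every $(n,d,h)$ outside the list by a double induction on $n$ and $d$ via the m\'ethode d'Horace. Fixing a hyperplane $H\cong\p^{n-1}$ and specializing some of the double points onto $H$, the Castelnuovo restriction sequence
$$0\to\I_{\operatorname{Res}_H X}(d-1)\to\I_X(d)\to\I_{X\cap H,\,H}(d)\to 0$$
reduces the vanishing of $\h^1\I_X(d)$ to a residual system in degree $d-1$ on $\p^n$ and a trace system in degree $d$ on $H\cong\p^{n-1}$, both of strictly smaller invariants; semicontinuity then transfers non-speciality back to the general position. When the count fails to split evenly across the sequence --- the recurring parity obstruction --- I would replace the naive specialization by Hirschowitz's differential Horace method, laying the double points on $H$ so that the trace is a reduced point and the residue a simple point infinitely near $H$. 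Controlling the flat limit of such a degeneration is precisely the collision analysis of Proposition \ref{pro:molteplicità_limite} and Lemma \ref{lem:grassisugrasso}, which is why this theorem is the natural benchmark for the present paper. The induction is anchored at small degree: $d=1$ is trivial, and $d=2$ is the quadric analysis already used in case (i), which pins down exactly when quadrics become special.

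The main obstacle lies entirely in this third step. Plain Horace specialization almost never splits the numerics into two already-solved subproblems, so the real work is choosing how many points to lay on $H$, passing to the differential method, and verifying delicately that neither the residual nor the trace system accidentally falls into an exceptional case --- for otherwise the inductive hypothesis would be unavailable. Orchestrating this bookkeeping across a two-parameter induction, and in particular establishing the limit lemmas that justify the differential degeneration, is where essentially all the content resides; by comparison the ``easy'' constructions proving the four sporadic cases and the quadric family are brief.
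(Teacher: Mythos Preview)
The paper does not prove this statement. Theorem \ref{thm:AH} is quoted as a background result, attributed to Alexander and Hirschowitz and cited via \cite{AH}; it is used repeatedly (e.g.\ in Proposition \ref{pro:doppisumultiplo} and throughout Section \ref{sec:doublepoints}) but no argument for it is given or sketched anywhere in the text. So there is no ``paper's own proof'' to compare your proposal against.

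That said, your outline is a faithful high-level summary of the Alexander--Hirschowitz strategy: the easy direction via explicit special members (double conic, double quadric, etc.), and the hard direction via Horace/differential Horace with a double induction on $n$ and $d$. One caution: you invoke Proposition \ref{pro:molteplicità_limite} and Lemma \ref{lem:grassisugrasso} as the tools controlling the flat limit in the differential step, but this reverses the logical dependence in the present paper. Those results are developed here \emph{after} Theorem \ref{thm:AH} is taken as known, and several applications downstream (notably Proposition \ref{pro:doppisumultiplo}) appeal to \ref{thm:AH} directly. In Alexander and Hirschowitz's original work the limit control is handled by their own degeneration lemmas, not by the collision machinery of this paper; presenting the latter as the justification for the former would be circular in this context.
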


Let us introduce two classical tools to deal with the computation of the dimension of a linear system.
The first one is an useful exact sequence that will help us later.

\begin{dfn}
	\label{dfn:castelnuovo}  
	Let $S\subset V$ be a smooth hypersurface and $L$ a linear system on $V$. Let $\rho: L\to L_{|S}$ be the restriction map. Let $ L-S:=\ker(\rho)$, that is $ L-S=\{0\}\cup\{D\in L\mid D\supset S\}$. Denote by $\LL-S$ the associated sheaf and by $\LL_{|S}$ the sheaf associated to $L_{|S}$. There is a short exact sequence of sheaves on $V$
	\[0\to  \LL-S\to \LL\to \LL_{|S}\to 0,\]
	 called \textit{restriction sequence} or
	\textit{Castelnuovo sequence}.
\end{dfn}

By Castelnuovo sequence, if both $ L-S$ and $ L_{|S}$ are non-special of non-negative virtual dimension, then $ L$ is non-special.

Another thing we can do with a linear system $ L:= L_{V,d}(m_1,\dots,m_r)$ is to degenerate it, namely we can pick $q_1,\dots,q_r\in V$ and move the singularities of $ L$ from general position to the point we choose. In this way we have to deal with
\[ L_0:= L_{V,d}(m_1,\dots,m_r)(q_1,\dots,q_r)\]
instead of $ L$. If we choose the points $q_i$ wisely, hopefully we can say something on $ L_0$ (for instance, on its dimension) and use semicontinuity to get information about $ L$. Now we want to make this intuitive notion more precise. The next definitions are based on \cite{CM1}.

\begin{dfn} Let $Y$ be a smooth variety. A \textit{degeneration} is a proper and flat morphism $\pi: Y\to\Delta$, where $\Delta\ni 0,1$ is a complex disk. For any $t\in\Delta$, we denote by $Y_t$ the fiber of $\pi$ over $t$. Let $\sigma_i:\Delta\to Y$ be sections of $\pi$ and let $Z$ be a scheme supported on $\bigcup_i\sigma_i(\Delta)$. For $t\in\Delta$, define $Z_t:=Z_{|Y_t}$, so that $Z_0$ is the flat limit of the schemes $Z_t$.
We say that $Z_0$ is a \textit{specialization} of $Z_t$.
\end{dfn}

For the sake of simplicity, sometimes we will say that $Z_0$ is a specialization of $Z_1$, instead of $Z_t$, implying that 1 is any general point of $\Delta$.

\begin{construction}[Specialization without collisions]\label{cons:notationlinearsystem}
	Let $m_1,\dots,m_r\in\N$ and let $V$ be a smooth variety. Let $Y:=V\times\Delta$ and let $\pi:Y\to\Delta$ be the projection. Fix $r$ disjoint sections $\sigma_1,\ldots,\sigma_{r}:\Delta\to Y$. 
	Let
	\[Z:=\bigcup_{i=1}^r \sigma_i(\Delta)^{m_i}\subset Y\]
	be the scheme supported on the sections with multiplicity $m_i$ along $\sigma_i(\Delta)$.
	Let 
	$$\LD 
	:=H^0\I_{Z,Y}(d)$$
	be the linear system on $Y$ associated to degree $d$ divisors having multiplicities at least $m_i$ along $\sigma_i(\Delta)$. 
	Then, for a general $t\in \Delta$, the linear system
	$\LD 
	_{|Y_t}$ coincides with 
	$$ L_t:= L_{V,d}(m_1,\dots,m_r)(\sigma_1(t),\ldots,\sigma_r(t)).$$
	\label{rem:specialize_to_get_non_speciality}
	By semicontinuity, we have	$$\dim L_0\geq \dim L_t.$$
	Therefore, in order to prove that $ L_t$ is non-special, it is enough to produce a degeneration such that 	$ L_0$ is non-special.
\end{construction}

In this paper we are interested in a different kind of degeneration, namely we want to drop the hypothesis that $\sigma_1,\dots,\sigma_r$ are disjoint. We now modify Construction \ref{cons:notationlinearsystem} in order to allow the specialized points to collapse. Since a limit is a local technique, we can work with the affine space instead of a variety $V$. This idea is based on \cite{CM3}.

\begin{construction}[Specialization with $h$ collapsing points]\label{cons:collapse}
	Let $Y:=\A^n\times\Delta$, with second projection 
	$\pi:Y\to\Delta$ and fibers $Y_t:=\A^n\times\{t\}$. 
	Fix a point $q\in Y_0$ and $h$ general sections $\sigma_1,\ldots,\sigma_{h}:\Delta\to Y$ of $\pi$ such that $\sigma_i(0)=q$.
	Define $$Z:=\bigcup_i\sigma_i(\Delta)^{m_i}\subset Y.$$	
	Let $X\to Y$ be the blow-up of $Y$ at the point $q$, with exceptional divisor $W$. Then we have 
	a degeneration $\pi_{X}:X\to\Delta$ and sections $\sigma_{X,i}:\Delta\to X$. The fiber $X_0$ is reducible, and it is given by $W\cup \tilde{Y_0}$, where $W\cong\p^n$ and $\tilde{Y_0}=\Bl_q Y_0$ is $\A^n$ blown up at one point. Let $R=W\cap \tilde{Y_0}\cong\p^{n-1}$ be the exceptional divisor of this blow-up. We want to stress that, since the sections $\sigma_i$ are general, $\sigma_{X,1}(0),\dots,\sigma_{X,h}(0)$ are general points of $W$. With these notations, we say that $Z_0=Z_{|Y_0}$ is the flat limit of $h$ collapsing points of multiplicities $m_1,\dots,m_h$.
\end{construction}

 Our goal will be to describe $Z_0$. Once we understand the limit, we may study the dimension of a linear system via its specializations with collapsing points, using the same technique described in Construction~\ref{rem:specialize_to_get_non_speciality}.

\begin{rmk} \label{rmk:possiamo assumere di essere nell'affine}\begin{enumerate}
		\item Since a collision is a local construction, our results about collisions on $\A^n$ hold on any smooth variety.
		\item When we consider degenerations as in Construction \ref{cons:notationlinearsystem} or \ref{cons:collapse}, by flatness we know that the length is preserved, so $\deg Z_0=\deg Z_1$.
		\item One could give the same definitions without requiring that the sections $\sigma_i$ are general, but in this case the theory becomes more involved and less interesting for applications.
	\end{enumerate}
\end{rmk}

As a warm-up, we start with an easy result that describes collisions of fat points on smooth curves.

\begin{pro}\label{pro:sullaretta}
	Let $m_1,\dots,m_h\in\N$ and let $m=m_1+\ldots+m_h$. The limit of $h$ collapsing points of multiplicities $m_1,\dots,m_h$ in $\A^1$ is an $m$-tuple point.
	\begin{proof}
		It is enough to observe that the only length $m$ subscheme of $\A^1$ supported at a point is the $m$-tuple point.
	\end{proof}
\end{pro}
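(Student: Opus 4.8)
The plan is to combine flatness with the fact that a smooth curve carries very few zero-dimensional subschemes. First I would record that the limit scheme $Z_0$ is supported at $q$: every section satisfies $\sigma_i(0)=q$, so the fibre $Z_{|Y_0}$ sits set-theoretically over the single point $q$. Next I would compute its degree. By Remark~\ref{rmk:possiamo assumere di essere nell'affine}(2) the family $\pi$ is flat, hence $\deg Z_0=\deg Z_1$; and for general $t\in\Delta$ the scheme $Z_t=Z_{|Y_t}$ is the disjoint union of $h$ fat points of multiplicities $m_1,\dots,m_h$ on the smooth curve $\A^1\times\{t\}$, each of which is isomorphic to $\spec\C[u]/(u^{m_i})$ and therefore has degree $m_i$. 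Summing, $\deg Z_0=m_1+\dots+m_h=m$.

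It then remains to identify the unique length-$m$ subscheme of $\A^1$ supported at a point. The local ring $\oo_{\A^1,q}$ is a discrete valuation ring, so its ideals are exactly the powers $\mathfrak m_q^k$ of the maximal ideal; hence any zero-dimensional subscheme of $\A^1$ supported at $q$ is cut out by some $\mathfrak m_q^k$ and has degree $k$. Therefore $\deg Z_0=m$ forces $Z_0$ to be defined by $\mathfrak m_q^m$, that is, $Z_0$ is the $m$-tuple point. (Alternatively, one could first argue that the $m$-tuple point is contained in $Z_0$ and then invoke Lemma~\ref{basic} together with the equality of degrees, but the direct argument above is shorter.)

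I do not expect any genuine obstacle: the only non-formal ingredient is the flatness built into Construction~\ref{cons:collapse}, and everything else is elementary commutative algebra on a DVR. The proposition is included as a warm-up precisely to contrast with the case $n\ge 2$, where $\A^n$ admits many pairwise non-isomorphic zero-dimensional subschemes of a fixed degree supported at a point; there, pinning down exactly which scheme is realised by the limit is the substantive problem addressed in the rest of the paper.
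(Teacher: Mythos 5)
Your argument is correct and is essentially the paper's own proof, merely spelled out in more detail: the paper's one-line observation that the only length-$m$ subscheme of $\A^1$ supported at a point is the $m$-tuple point is exactly your DVR argument, combined with the degree computation from flatness. No differences of substance.
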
	

Now that the case $n=1$ is settled, for the rest of this paper we assume $n\ge 2$ and we try to move to some more interesting cases in higher dimension. In order to understand what $Z_0$ is, the first problem to tackle is to compute its multiplicity. 
In \cite[Proposition 4]{EvainIdea}, the author solves the problem for $n=2$, under the assumption that the sections $\sigma_i$ are given by a homothetic transformation. It is proved that the multiplicity of the limit scheme $Z_0$ is the minimum integer $j$ such that the linear system $H^0\I_{Z_1,\A^2}(j)=L_{n,j}(m_1,\dots,m_h)$ is nonzero. In \cite[Theorem 2.6]{Ne}, the assumption on the section is dropped, but only a bound is provided. In \cite{GM}, a different proof allows the authors to generalize this bound to any dimension. Now we want to improve this result, and show that the estimated value is actually achieved with equality.
\begin{pro}\label{pro:molteplicità_limite}
	Let $k:=\min\{j\in\N\mid H^0\I_{Z_1,\A^n
	}(j)\neq 0\}$. Then $\mul Z_0=k$. In particular, the multiplicity of the limit scheme does not depend on $\sigma_i$, as long as they are general.
	\begin{proof}
		Thanks to \cite[Lemma 20]{GM}, it suffices to prove that $\mul Z_0\le k$, so we only have to show that $Z_0$ is contained in a degree $k$ divisor. For $t\neq 0$, set $l=\h^0\I_{Z_t}(k)$. Since the points of $Z_t$ are in general position, $l$ does not depend on $t$, and by hypothesis $l\ge 1$. Let $P\subset Y_t=\A^n$ be a set of $l-1$ general points, and define $Z'_t=Z_t\cup P$.
		Observe that $Z'_t\supset Z_t$ for every $t$, and there is a unique degree $k$ divisor $D_t\subset Y_t$ such that $D_t\supset Z'_t$. Let $f_t$ be the polynomial defining $D_t$ as a divisor in $Y_t$.
		Then $f_0$ defines a divisor $D_0$ of $Y_0$ which is the flat limit of the $D_t$'s. Hence $\deg f_0\le \deg f_t=k$ 
		 and $D_0\supset Z'_0\supset Z_0$, so $\mul Z_0\le k$.
	\end{proof}
\end{pro}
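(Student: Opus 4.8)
The plan is to establish the two inequalities $\mul Z_0 \le k$ and $\mul Z_0 \ge k$ separately. The second inequality is quoted from \cite[Lemma 20]{GM}, so the whole burden is to prove $\mul Z_0 \le k$, i.e.\ that the limit scheme $Z_0$ is contained in some divisor of degree $k$ in $Y_0 = \A^n$. The natural idea is to produce such a divisor as a \emph{flat limit} of degree $k$ divisors containing the generic-fiber schemes $Z_t$.

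First I would fix $t \ne 0$ and set $\ell = \h^0\I_{Z_t}(k)$. Because the sections $\sigma_i$ are general, the points $\sigma_i(t)$ are in general position, so $\ell$ is independent of $t$, and by the definition of $k$ we have $\ell \ge 1$. The subtlety is that the linear system of degree $k$ divisors through $Z_t$ may be positive-dimensional, and a naive limit of an arbitrary member need not behave well; to pin down a \emph{unique} member I would add $\ell - 1$ general auxiliary points $P \subset Y_t$ and look at $Z'_t = Z_t \cup P$. Since $P$ is general and imposes independent conditions, there is a unique degree $k$ divisor $D_t \supset Z'_t$; let $f_t$ be its defining polynomial. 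Letting the auxiliary points move along general sections too, the family $\{f_t\}_{t\ne 0}$ extends (after normalizing) to $t = 0$, producing a polynomial $f_0$ of degree $\le k$ cutting out a divisor $D_0$ of $Y_0$ that is the flat limit of the $D_t$.

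The key step is then to check that $D_0 \supset Z_0$. This follows from the fact that $D_t \supset Z'_t \supset Z_t$ for all $t \ne 0$, together with the compatibility of flat limits with inclusions: $Z_0$ is by construction the flat limit of $Z_t$, and a flat limit of subschemes contained in a family of divisors is contained in the limit divisor. Hence $Z_0 \subset D_0$ with $\deg D_0 \le k$, giving $\mul Z_0 \le k$, and combined with the cited lower bound we get $\mul Z_0 = k$. The final sentence of the statement — independence of $\sigma_i$ — is then immediate, since $k$ was defined purely in terms of $H^0\I_{Z_1,\A^n}(j)$, whose value does not change as long as the points $\sigma_i(1)$ stay in general position.

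The main obstacle I anticipate is the clean bookkeeping of the limiting process for $f_t$: one must argue that after clearing denominators / rescaling, the coefficients of $f_t$ have a well-defined nonzero limit as $t \to 0$, so that $f_0$ is a genuine nonzero polynomial of degree $\le k$, and that the resulting $D_0$ really is the flat limit (not just a set-theoretic limit) of the $D_t$. This is where the properness and flatness in Construction~\ref{cons:collapse} are used, and it is essentially the only place where care is needed; the rest is formal.
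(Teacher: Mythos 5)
Your proposal is correct and follows essentially the same route as the paper: reduce to the upper bound $\mul Z_0\le k$ via \cite[Lemma 20]{GM}, add $\ell-1$ general auxiliary points to make the degree $k$ divisor through $Z'_t$ unique, and pass to the flat limit $D_0\supset Z'_0\supset Z_0$. The only difference is that you spell out the bookkeeping of normalizing the coefficients of $f_t$ as $t\to 0$, which the paper leaves implicit.
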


In some cases the multiplicity is enough to compute the limit scheme.



\begin{lem}\label{lem:grassisugrasso} Let $m_1,\dots,m_h,m,n\in\N$ be such that
\[\binom{m_1+n-1}{n}+\ldots+\binom{m_h+n-1}{n}=\binom{m+n}{n}.\]
If 
$ L_{n,m}(m_1,\dots,m_h)$ is non-special, then the limit of $h$ colliding points of multiplicities $m_1,\dots,m_h$ in $\A^n$ is a $(m+1)$-tuple point.
			\begin{proof} Consider the scheme $Z_1\subset\A^n$ made by $h$ general fat points of multiplicities $m_1,\dots,m_h$. By hypothesis, $L_{n,m}(m_1,\dots,m_h)=H^0\I_{Z_1,\A^n}(m)$ is non-special and it has expected dimension $0$, so it is empty. On the other hand, $ L_{n,m+1}(m_1,\dots,m_h)$ has positive expected dimension, so it is not empty. By Proposition \ref{pro:molteplicità_limite}, the limit scheme $Z_0$ contains a $(m+1)$-tuple point. We know that $\deg Z_0=\deg Z_1$ by flatness, and by hypothesis the length of $Z_1$ coincides with that of a $(m+1)$-tuple point. We conclude by Lemma \ref{stessogrado}.
		\end{proof}\end{lem}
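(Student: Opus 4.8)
The plan is to proceed exactly as the paper's general strategy dictates: first pin down the multiplicity of the limit scheme $Z_0$ via Proposition~\ref{pro:molteplicità_limite}, then exploit the numerical hypothesis together with flatness to conclude via Lemma~\ref{stessogrado}. So let $Z_1\subset\A^n$ be $h$ general fat points of multiplicities $m_1,\dots,m_h$, and set $X=m_1p_1+\dots+m_hp_h$ to be the underlying fat point scheme, so $\deg Z_1=\sum_{i=1}^h\binom{m_i+n-1}{n}=\binom{m+n}{n}$ by hypothesis.

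First I would compute $k:=\min\{j\in\N\mid H^0\I_{Z_1,\A^n}(j)\neq 0\}$, which by Proposition~\ref{pro:molteplicità_limite} equals $\mul Z_0$. The key observation is that the numerical hypothesis makes $L_{n,m}(m_1,\dots,m_h)$ have virtual dimension $\h^0\oo_{\p^n}(m)-\sum_i\binom{m_i+n-1}{n}=\binom{m+n}{n}-\binom{m+n}{n}=0$, hence expected dimension $0$. Since we assume this system is non-special, its actual dimension is also $0$, i.e. $H^0\I_{Z_1,\A^n}(m)=0$, so $k\ge m+1$. On the other hand $L_{n,m+1}(m_1,\dots,m_h)$ has virtual dimension $\binom{m+1+n}{n}-\binom{m+n}{n}=\binom{m+n}{n-1}>0$, so it is nonempty regardless of speciality, giving $k\le m+1$. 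Therefore $k=m+1$ and $\mul Z_0=m+1$, which means $Z_0$ contains the $(m+1)$-tuple point $Y$ supported at $q$.

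Finally I would compare degrees. By Remark~\ref{rmk:possiamo assumere di essere nell'affine}(2) (flatness), $\deg Z_0=\deg Z_1=\binom{m+n}{n}$. But $\deg Y=\binom{(m+1)-1+n}{n}=\binom{m+n}{n}$ as well, so $\deg Y=\deg Z_0$. Since $Y\subseteq Z_0$ is a subscheme of a $0$-dimensional scheme supported at the single point $q$ with the same degree, Lemma~\ref{stessogrado} forces $Y=Z_0$, i.e. $Z_0$ is exactly the $(m+1)$-tuple point.

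The content here is almost entirely bookkeeping with binomial coefficients plus the two cited results; there is no serious obstacle. The one point that requires the hypothesis in an essential way is the lower bound $k\ge m+1$: this is precisely where non-speciality of $L_{n,m}(m_1,\dots,m_h)$ is used, since without it the system could be special and nonempty, making $k\le m$ and destroying the argument. Everything else — the upper bound $k\le m+1$ and the degree count — is automatic from the numerical identity and flatness.
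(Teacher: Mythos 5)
Your proof is correct and follows exactly the same route as the paper's: non-speciality plus the numerical identity forces $H^0\I_{Z_1}(m)=0$ while $L_{n,m+1}(m_1,\dots,m_h)$ is nonempty by virtual dimension, so Proposition~\ref{pro:molteplicità_limite} gives $\mul Z_0=m+1$, and the degree count via flatness and Lemma~\ref{stessogrado} finishes. The only difference is that you spell out the binomial bookkeeping for $k\ge m+1$ and $k\le m+1$ explicitly, which the paper leaves implicit.
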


The previous Lemma will be very useful for our purposes. Indeed, when we use limits to specialize a linear system, the most effective result would be a description of $Z_0$ as a fat point of some multiplicity. However, this can happen only if the hypothesis of Lemma \ref{lem:grassisugrasso} are satisfied. When the scheme we are specializing does not have the degree of a multiple point, this analysis is not enough to determine the limit scheme.

In the notations of Construction \ref{cons:collapse}, 
%
let \[\Sigma:=\bigcup_{i=1}^h\sigma_{X,i}(\Delta)\subset X\] be the smooth scheme associated to strict transform $Z_X$ of $Z$ on $X$.
Let $\X\to X$ be the blow-up of the ideal sheaf $\I_{\Sigma}$ with exceptional divisors $\E_1,\dots,\E_h$, and let $\varphi:\X\to\Delta$ be the degeneration onto $\Delta$. Note that this blow-up is an isomorphism in a neighbourhood of $Y_0$. The central fiber is
$$\X_0:=\varphi^{-1}(0)=P\cup \tilde{Y}_0,$$
where $P$ is the blow-up of $W\cong\p^n$ at $h$ general points. With abuse of notation, we identify $R\subset X$ with its strict transform $R=P\cap \tilde{Y}_0$. The linear systems we are interested in are $ L:=H^0\oo_\X(-\sum_i m_i\E_i-\mul(Z_0)P)$ and its restrictions $ L_P$, $ L_R$, to $P$ and $R$.
The linear system $ L$ is complete. However, the following example (\cite[Example 2.10]{Ne}) shows that in general $ L_R$ is not complete.
\begin{es}\label{3su3plo}
	In the case of 3 colliding double points in $\A^2$, the limit has multiplicity 3. On the other hand, a triple point has degree $6$, while $\deg Z_1=9$, therefore the limit is not only the triple point. 
	 In order to better understand the first infinitesimal neighborhood of the limit, we look at the system $ L_P\cong L_{2,3}(2^3)$. A plane cubic with 3 double points in general position is the union of 3 lines, that intersect $R$ in 3 points. Hence $ L_R\subsetneq\oo_R(3)$ is not complete, but rather it has only one nonzero section, consisting of the 3 intersection points. Those 3 intersections are base points for $ L_P$, so they are tangent directions (infinitely near points) in the limit scheme. We proved that $Z_0$ contains a triple point with 3 infinitely near simple points. Since these tangent directions impose independent conditions on cubics of $R$, that is, $ L_R$ is non-special, this subscheme of $Z_0$ has degree $6+3=9=\deg Z_0$. By Lemma \ref{stessogrado}, $Z_0$ is a triple point with 3 infinitely near simple points.
\end{es}

It is worth to mention that, unlike $ L_R$, the system $ L_P$ is always complete, as proven in \cite[Lemma 24]{GM}. Observe that in Example \ref{3su3plo} we computed the degree of our candidate by checking that $ L_R$ is non-special. This is an important and often nontrivial step, as we will see in Section \ref{sec:doublepoints}. In order to make this precise, we need a lemma.


\begin{lem}
	\label{lem:degree of a fat point with infinitely near simple points}
	Let $q\in\A^n$. Let $E$ be the exceptional divisor of $\Bl_q\A^n$ and $B:=\{p_1,\dots,p_t\}\subset E$ a set of $t$ simple points. Let $X$ be the scheme supported at $q$ consisting of a $m$-ple point with $t$ infinitely near points $p_1,\dots,p_t$. Then
	\[\deg X=\binom{n+m-1}{n}+\binom{n+m-1}{n-1}-\h^0\I_{B,E}(m)=\binom{n+m}{n}-\h^0\I_{B,E}(m).\]
	\begin{proof}
		We argue by induction on $t$. If $t=0$, then $X$ is just a $m$-ple point in $\A^n$, so $\deg X=\binom{n+m-1}{n}$. On the other hand, $B=\varnothing$, so $\h^0\I_{B,E}(m)=\h^0\oo_{\p^n}(m)=\binom{n+m-1}{n-1}$ and the statement holds. Assume then $t\ge 1$. Let $B':=B\setminus\{p_t\}$ and let $X'\subset X$ be the subscheme consisting of a $m$-ple point with $t-1$ infinitely near points $p_1,\dots,p_{t-1}$. By induction hypothesis,
			\[\deg X'=\binom{n+m}{n}-\h^0\I_{B',E}(m).\]
			There are two possibilities. If $p_t$ is a base point for the linear system $H^0\I_{B',E}(m)$, then 
			 $\I_{B',E}(m)=\I_{B,E}(m)$, so
			\[\deg X=\deg X'=\binom{n+m}{n}-\h^0\I_{B,E}(m).\]
			If $p_t$ is not a base point for $H^0\I_{B',E}(m)$, then $\h^0\I_{B',E}(m)=1+\h^0\I_{B,E}(m)$ and $X'$ is a proper subscheme of $X$, so $\deg X'<\deg X$. Since $p_t$ is a simple point, the difference of the degrees cannot be more than 1, so
			\begin{align*}
			\deg X&=1+\deg X'=1+\binom{n+m}{n}-\h^0\I_{B',E}(m)\\
			&=1+\binom{n+m}{n}-(1+\h^0\I_{B,E}(m)) 
			.\qedhere
			\end{align*}

	\end{proof}
	\end{lem}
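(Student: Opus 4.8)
The plan is to reduce the statement to a cohomology computation on one blow-up. Since everything in sight is local at $q$, I would first embed $\A^n$ into $\p^n$, so that $X\subset\p^n$ is a $0$-dimensional scheme and $\deg X=\h^0\oo_{\p^n}(d)-\h^0\I_{X,\p^n}(d)$ for $d$ large enough (then $X$ imposes $\deg X$ independent conditions, as recalled in Section~\ref{sec:notations}). Let $\pi\colon Y\to\p^n$ be the blow-up at $q$, with $H=\pi^*(\text{hyperplane})$ and exceptional divisor $E\cong\p^{n-1}$; then $\oo_Y(H)|_E\cong\oo_E$ and $\oo_Y(-E)|_E\cong\oo_{\p^{n-1}}(1)$, so $\oo_Y(dH-mE)|_E\cong\oo_{\p^{n-1}}(m)$. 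By the meaning of ``$m$-ple point with infinitely near simple points $p_1,\dots,p_t$'', a degree $d$ form belongs to $\I_{X,\p^n}$ precisely when its proper transform, viewed as a section of $\oo_Y(dH-mE)$, vanishes at $p_1,\dots,p_t$; that is, $H^0\I_{X,\p^n}(d)=H^0\bigl(Y,\I_{B,Y}\otimes\oo_Y(dH-mE)\bigr)$ for every $d$.

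Next I would restrict to $E$: tensoring $0\to\oo_Y(-E)\to\oo_Y\to\oo_E\to 0$ with $\oo_Y(dH-mE)$ gives
\[0\to\oo_Y\bigl(dH-(m+1)E\bigr)\to\oo_Y(dH-mE)\xrightarrow{\ r\ }\oo_{\p^{n-1}}(m)\to 0.\]
For $d$ large enough the map $r$ is surjective on global sections, because $H^1\bigl(Y,\oo_Y(dH-(m+1)E)\bigr)=H^1\bigl(\p^n,\I_{(m+1)q}(d)\bigr)=0$ as soon as $d\ge m$ (the point $(m+1)q$ imposes independent conditions on degree $d$ forms). Since $B\subset E$, a section of $\oo_Y(dH-mE)$ vanishes along $B$ if and only if its image under $r$ lies in $H^0\I_{B,E}(m)$; hence $H^0\bigl(Y,\I_{B,Y}\otimes\oo_Y(dH-mE)\bigr)=r^{-1}\bigl(H^0\I_{B,E}(m)\bigr)$ and so
\[\h^0\I_{X,\p^n}(d)=\h^0\oo_Y\bigl(dH-(m+1)E\bigr)+\h^0\I_{B,E}(m).\]
Plugging in $\h^0\oo_Y\bigl(dH-(m+1)E\bigr)=\binom{n+d}{n}-\binom{n+m}{n}$ (valid for $d\ge m$) and subtracting from $\h^0\oo_{\p^n}(d)=\binom{n+d}{n}$, the $\binom{n+d}{n}$ cancel and I get $\deg X=\binom{n+m}{n}-\h^0\I_{B,E}(m)$; the first displayed form of the statement is then Pascal's identity $\binom{n+m-1}{n}+\binom{n+m-1}{n-1}=\binom{n+m}{n}$.

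As a backup not relying on vanishing on $Y$, I would instead induct on $t$, comparing $X$ with the subscheme $X'$ obtained by dropping $p_t$: if $p_t$ is a base point of $H^0\I_{B',E}(m)$ then $\I_{X'}=\I_X$ and $\h^0\I_{B',E}(m)=\h^0\I_{B,E}(m)$; otherwise $\deg X=\deg X'+1$ (a simple point raises the degree by at most $1$) while $\h^0\I_{B,E}(m)=\h^0\I_{B',E}(m)-1$; either way the formula persists, with base case $t=0$ given by $\deg X=\dim\C[x_1,\dots,x_n]/\mathfrak m^m=\binom{n+m-1}{n}$ and $\h^0\oo_{\p^{n-1}}(m)=\binom{n+m-1}{n-1}$. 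In both approaches the one genuinely delicate point is to nail down the definition of $X$ — that the $t$ infinitely near simple points are cut out of the linear system $|\oo_Y(dH-mE)|$ by the ideal of $B\subset E$, with no interference with the multiplicity $m$ at $q$; after that the computation is routine bookkeeping.
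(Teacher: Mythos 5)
Your proposal is correct, and it actually contains two proofs. Your ``backup'' induction on $t$ is, almost word for word, the paper's own argument: compare $X$ with the subscheme $X'$ obtained by dropping $p_t$, and split into the two cases according to whether $p_t$ is a base point of $H^0\I_{B',E}(m)$, with the base case $t=0$ handled by $\deg X=\binom{n+m-1}{n}$ and $\h^0\oo_{\p^{n-1}}(m)=\binom{n+m-1}{n-1}$. Your primary route is genuinely different and arguably cleaner: it computes $\h^0\I_{X,\p^n}(d)$ directly on the blow-up via the exact sequence $0\to\oo_Y(dH-(m+1)E)\to\oo_Y(dH-mE)\to\oo_{\p^{n-1}}(m)\to 0$, using $\h^1\I_{(m+1)q}(d)=0$ for $d\ge m$ to get surjectivity of restriction to $E$, so that $\h^0\I_{X}(d)=\h^0\oo_Y(dH-(m+1)E)+\h^0\I_{B,E}(m)$ and the formula drops out in one step. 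This buys something concrete: in the inductive proof, the base-point case silently needs that $X=X'$ as schemes (equivalently, that the ideals agree in all large degrees $d$, not just that the count in degree $m$ matches), and your identification $H^0\I_{X,\p^n}(d)=r^{-1}\bigl(H^0\I_{B,E}(m)\bigr)$ is exactly what makes that step rigorous. The one point to be explicit about in either version is the scheme-theoretic definition of $X$ — that its degree-$d$ ideal is the preimage under $r$ of $H^0\I_{B,E}(m)$ — which you correctly flag as the only delicate ingredient.
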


Notice that Lemma \ref{lem:degree of a fat point with infinitely near simple points} does not need to hold when the infinitely near points have multiplicities greater than 1. An explicit computation shows that the subscheme of $\A^3$ consisting of a triple point with an infinitely near double point has degree 12. Nonetheless, we will often encounter infinitely near simple points. In those cases, the lemma allows us to get information on the limit scheme.

\begin{cor}\label{cor:condizioni imposte dai punti infinitamente vicini}
	In the notation of Construction \ref{cons:collapse}, let $m=\mul Z_0$ and let $B:=(\Bs( L_P))_{|R}$. If $B$ is a set of simple points, then $\deg Z_0\ge\binom{n+m}{n}-\dim( L_R)$. In particular, if $ L_R$ is non-special then $\deg Z_0\ge\binom{n+m-1}{n}+t$.
	\begin{proof}
By hypothesis $Z_0$ contains a subscheme $S$ consisting of a $m$-ple point with some infinitely near simple points. We conclude by Lemma \ref{lem:degree of a fat point with infinitely near simple points}.
	\end{proof}
\end{cor}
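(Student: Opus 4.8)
The plan is to exhibit an explicit subscheme $S\subseteq Z_0$ whose length I can evaluate with Lemma~\ref{lem:degree of a fat point with infinitely near simple points}, and then check that $\deg S$ is exactly the asserted bound; since $\deg Z_0\ge\deg S$, this finishes the proof.

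First, because $m=\mul Z_0$, the limit scheme already contains the $m$-tuple point supported at $q$. Second — and this is the geometric core — I would argue that every point of $B$ occurs as a reduced infinitely near point of $Z_0$, reproducing the mechanism of Example~\ref{3su3plo}. Concretely: take a general divisor representing the linear system under degeneration; in the family $\varphi\colon\X\to\Delta$ it specializes to a divisor $D_0$ of the central fibre $\X_0=P\cup\tilde Y_0$, whose component on $P$ is a member of $L_P$ and whose component on $\tilde Y_0$ contains the strict transform $\tilde Z_0$ of $Z_0$. Since $R=P\cap\tilde Y_0$ is precisely the exceptional divisor of $\tilde Y_0=\Bl_q\A^n$, and every member of $L_P$ vanishes on $B=(\Bs L_P)_{|R}$, the divisor $D_0$, hence $\tilde Z_0$, must pass through each point of $B$. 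As $B$ is a reduced set of $t$ points by hypothesis, $Z_0$ contains the scheme $S$ made of the $m$-tuple point at $q$ together with $t$ infinitely near simple points, namely the points of $B\subset R\cong\p^{n-1}$.

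Third, I apply Lemma~\ref{lem:degree of a fat point with infinitely near simple points}, identifying $E$ with $R\cong\p^{n-1}$: it gives $\deg S=\binom{n+m}{n}-\h^0\I_{B,R}(m)$. The common zeros of $L_R$ on $R$ are exactly $B$, so $L_R\subseteq H^0\I_{B,R}(m)$, and I would check equality holds — the restriction of the complete system $L_P$ is cut out by its base points once these form a reduced set — so that $\h^0\I_{B,R}(m)=\dim L_R$ and $\deg Z_0\ge\deg S=\binom{n+m}{n}-\dim L_R$. For the last assertion, if $L_R$ is non-special then its $t$ base points impose independent conditions on $H^0\oo_{\p^{n-1}}(m)$, whence $\h^0\I_{B,R}(m)=\binom{n+m-1}{n-1}-t$; Pascal's identity $\binom{n+m}{n}=\binom{n+m-1}{n}+\binom{n+m-1}{n-1}$ then turns the bound into $\deg Z_0\ge\binom{n+m-1}{n}+t$.

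I expect the main obstacle to be the second step: showing rigorously that the base points of $L_P$ lying on $R$ are forced to appear as infinitely near points of $Z_0$, i.e. that the limit scheme remembers its first infinitesimal neighbourhood through the limit linear system $L$. A secondary point needing care is the identification $\dim L_R=\h^0\I_{B,R}(m)$; here the hypothesis that $B$ is a reduced set of points (which is also what makes Lemma~\ref{lem:degree of a fat point with infinitely near simple points} applicable) together with the genericity of the sections in Construction~\ref{cons:collapse} should be what one uses.
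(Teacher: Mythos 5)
Your proposal is correct and follows essentially the same route as the paper: exhibit inside $Z_0$ the subscheme $S$ given by the $m$-ple point with the infinitely near simple points of $B$, evaluate $\deg S$ with Lemma \ref{lem:degree of a fat point with infinitely near simple points}, and conclude from $\deg Z_0\ge\deg S$. The paper's proof is just a two-line version of this, taking the containment $S\subseteq Z_0$ for granted from the preceding discussion (Example \ref{3su3plo} and Remark \ref{rmk:limit_description}), whereas you spell out both that containment and the identification $\dim L_R=\h^0\I_{B,R}(m)$ — the two points you rightly flag as the ones needing care.
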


Before we move to the first results on limits, it is important to have clear in mind what kind of characterization we want. In general it will be too complicated to determine the limit up to isomorphism. For instance, let $Z_1$ consist of 14 general simple points in $\A^2$ and let $Z_0$ be their collision. By Proposition \ref{pro:molteplicità_limite}, $\mul Z_0=4$. However, just as in Example \ref{3su3plo}, the scheme cannot be just a fourtuple point. In order to find more information, we look at the linear system $ L_P\cong L_{2,4}(1^{14})$. This system has only one nonzero section $C$, so its restriction to the exceptional line $R$ consists of 4 simple points. Thus the candidate limit is a fourtuple point with 4 tangent directions, and since $ L_R$ is non-special it has degree at least
\[\binom{3+2}{2}+4=14=\deg Z_1\]
by Corollary \ref{cor:condizioni imposte dai punti infinitamente vicini}. Therefore, our candidate actually coincides with the limit. However, notice that if we change the sections $\sigma_1,\dots,\sigma_{14}$, then we will have different tangent directions to the limit. Recall that two 4-tuples of points in $\p^1$ are not projectively equivalent in general, so the limits do not need to be isomorphic. Nonetheless, we will be satisfied to say that the limit is a fourtuple point with 4 infinitely near simple points. 

We also want to stress that our analysis works as long as we make all points collide at once. If we collide some of them to a limit scheme $\tilde{Z_1}$ and then we collide the others together with $\tilde{Z_1}$, we are not guaranteed to obtain the same limit scheme as if we collide all of them at once.	
As an example, let $Z_1$ be the scheme consisting of a double point and 3 simple points in $\A^2$. If we make them collide, the multiplicity of the limit scheme $Z_0$ is 3 by Proposition \ref{pro:molteplicità_limite}. On the other hand, we could collide the 3 simple points to a double point, but the limit of 2 colliding double points has multiplicity 2. However, this kind of multi-staged collisions provides new legitimate ways to degenerate a linear system.

It is time to move to the description of the limit $Z_0$, and we start with the limit of a bunch of colliding double points. While in some sense it is simpler, the study of collisions of simple points requires a different and peculiar treatment, and will be addressed in another paper.

\section{Double points}\label{sec:doublepoints}
In this section we assume that all the collapsing points have multiplicity 2. First we review some of the well-understood cases. In Definition \ref{dfn:candidate} we explicitly construct a scheme consisting of one triple point with some tangent directions, we prove that it is a subscheme of the limit and we conjecture that they coincide. This is equivalent to compute the number of independent conditions given by a set of simple points in a given special position, see Conjecture \ref{con:n+k}. Lemma \ref{lem:quantecodizionidanno} allows us to prove that the conjecture holds for several small values. Conversely, in Theorem \ref{thm:lift} we prove that the general triple point with the suitable number of tangent direction can be obtained as a collision of double points.

When dealing with linear systems with double points, we will repeatedly use Theorem \ref{thm:AH}. As a warm-up, we deal with the cases satisfying the hypothesis of Lemma \ref{lem:grassisugrasso}.

\begin{pro}\label{pro:doppisumultiplo}
	Let $m\ge 2$ and $(n,m)\notin\{(2,3),(2,5),
	(4,4),(4,5)\}$. Define $h:=\frac{\binom{n+m-1}{n}}{n+1}$. If $h\in\N$, then the limit of $h$ colliding double points in $\p^n$ is an $m$-tuple point.
	\begin{proof}
		By our numerical assumption, Theorem \ref{thm:AH} implies that $ L_{n,m-1}(2^h)$ is non-special. Now the statement follows by Lemma \ref{lem:grassisugrasso}.
	\end{proof}
\end{pro}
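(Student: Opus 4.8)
The plan is to reduce the statement directly to Lemma~\ref{lem:grassisugrasso} by checking its two hypotheses: the numerical identity on binomials and the non-speciality of the relevant system of double points. The numerical identity is automatic from the definition of $h$: since $\binom{2-1+n}{n}=\binom{n+1}{n}=n+1$, the condition $h=\binom{n+m-1}{n}/(n+1)$ being an integer is exactly the statement that $h\cdot\binom{2+n-1}{n}=\binom{n+m-1}{n}$, and $\binom{n+m-1}{n}=\binom{(m-1)+n}{n}$ is the length of an $((m-1)+1)=m$-tuple point, which is what Lemma~\ref{lem:grassisugrasso} wants (with the role of ``$m$'' in that lemma played by $m-1$ here). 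So the identity in the hypothesis of Lemma~\ref{lem:grassisugrasso} holds with the multiplicities $m_1=\dots=m_h=2$ and the target degree $m-1$.

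The remaining hypothesis is that $L_{n,m-1}(2^h)$ is non-special. First I would observe that, by construction, this system has virtual dimension zero: $\vdim L_{n,m-1}(2^h)=\h^0\oo_{\p^n}(m-1)-h(n+1)=\binom{n+m-1}{n}-\binom{n+m-1}{n}=0$. Hence it is either empty (non-special) or positive-dimensional (special), and I need to rule out the latter using the Alexander--Hirschowitz classification, Theorem~\ref{thm:AH}. The special triples $(n,d,h)$ there are $(n,2,h)$ for $2\le h\le n$, and the four sporadic cases $(2,4,5),(3,4,9),(4,3,7),(4,4,14)$. Here the degree is $d=m-1$ and the number of points is $h=\binom{n+m-1}{n}/(n+1)$, so I would go through the AH list and see which pairs $(n,m)$ with $m\ge 2$ could possibly land on a special triple. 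The case $d=m-1=2$, i.e.\ $m=3$: then $h=\binom{n+2}{n}/(n+1)=\binom{n+2}{2}/(n+1)=(n+2)/2$, which is an integer only when $n$ is even, and then $h=(n+2)/2\le n$ exactly when $n\ge 2$; for $n=2$ this gives $h=2$, a special case, which is why $(2,3)$ is excluded. For $n\ge 4$ even, $m=3$, one gets $h=(n+2)/2$ which does lie in the range $[2,n]$ — so I must check whether these are actually excluded or whether AH's $(n,2,h)$ family with $h=(n+2)/2$ needs to be thrown out too; this is the point where I expect to have to be careful, and I suspect the statement intends $m\ge 2$ together with the listed exclusions to already cover everything, so I would double-check that for $m=3$ and $n$ even with $n\ge 4$ the value $h=(n+2)/2$ forces $n+2$ even hence $n$ even, and that indeed $(n,2,(n+2)/2)$ is on the AH special list, meaning those cases would also need excluding — so either the hypothesis should also exclude them, or (more likely) I have miscomputed and for $n\ge 4$ the constraint fails to give an integer or falls outside $[2,n]$. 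The honest thing in the write-up is to tabulate: for $m=3$, $h=\binom{n+2}{2}/(n+1)=(n+2)/2$; this is an integer iff $n$ is even; and $(n,2,(n+2)/2)$ is AH-special iff $2\le (n+2)/2\le n$ iff $n\ge 2$. So in fact for every even $n\ge 2$ with $m=3$ the system is special, and the hypothesis as stated only excludes $n=2$. I would flag this and either restrict to $m\ge 4$ or add ``$m=3$'' to the exclusion when $n$ is even, unless a direct argument (e.g.\ the points not being in the degenerate AH configuration) saves it; since the general position of the $h$ points is what matters and AH says general points in those cases are genuinely special, I do not see a rescue, so the cleanest fix is to treat $m=3$ separately.

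For $m\ge 4$ (so $d=m-1\ge 3$), the degree is at least $3$, ruling out the $(n,2,h)$ family immediately. It then remains only to check the three sporadic triples with $d\ge 3$, namely $(3,4,9)$, $(4,3,7)$, $(4,4,14)$. The triple $(3,4,9)$ corresponds to $n=3$, $m-1=4$, $h=9$: here $\binom{n+m-1}{n}/(n+1)=\binom{7}{3}/4=35/4\notin\N$, so this case does not even arise for us (our $h$ is not an integer), hence no exclusion needed. The triple $(4,3,7)$ corresponds to $n=4$, $m-1=3$, i.e.\ $m=4$, $h=7$: and indeed $\binom{n+m-1}{n}/(n+1)=\binom{7}{4}/5=35/5=7$, which is an integer — so $(n,m)=(4,4)$ must be excluded, as it is. The triple $(4,4,14)$ corresponds to $n=4$, $m-1=4$, i.e.\ $m=5$, $h=14$: and $\binom{n+m-1}{n}/(n+1)=\binom{8}{4}/5=70/5=14$, an integer — so $(n,m)=(4,5)$ must be excluded, as it is. Once these are dispatched, Theorem~\ref{thm:AH} guarantees $L_{n,m-1}(2^h)$ is non-special (hence empty, of expected dimension $0$), and Lemma~\ref{lem:grassisugrasso} applied with degree $m-1$ yields that the limit of the $h$ colliding double points in $\p^n$ (equivalently in $\A^n$, by Remark~\ref{rmk:possiamo assumere di essere nell'affine}(1)) is an $m$-tuple point. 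The main obstacle, as indicated, is making sure the list of excluded $(n,m)$ is exactly right — in particular sorting out the $m=3$, $n$ even case — rather than any difficulty in the argument itself.
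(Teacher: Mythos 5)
Your proposal follows exactly the same route as the paper's proof: check the numerical identity required by Lemma \ref{lem:grassisugrasso} (which, as you observe, is precisely the hypothesis $h\in\N$), and then invoke Theorem \ref{thm:AH} to conclude that $L_{n,m-1}(2^h)$ is non-special and hence empty. For $m\ge 4$ your walk through the Alexander--Hirschowitz exceptions is complete and correct, and it recovers exactly the exclusions $(2,5)$, $(4,4)$, $(4,5)$ appearing in the statement.

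The issue you flag at $m=3$ is real, and it is a defect of the statement rather than of your argument. For $m=3$ one has $h=\binom{n+2}{2}/(n+1)=(n+2)/2$, which is an integer precisely when $n$ is even, and then $2\le h\le n$ holds for every even $n\ge 2$, so $L_{n,2}(2^h)$ falls into case (i) of Theorem \ref{thm:AH} and is special --- indeed nonempty, since a rank-two quadric singular along the plane spanned by the $h$ points provides a section. By Proposition \ref{pro:molteplicità_limite} the limit then has multiplicity $2$ and cannot be an $m$-tuple point; this is consistent with the paper's own observation that $h\le n$ forces $\mul Z_0=2$. So the pairs $(n,3)$ with $n$ even and $n\ge 4$ must be excluded alongside $(2,3)$ (equivalently, one should require $d=m-1\ge 3$ so that case (i) of Theorem \ref{thm:AH} cannot occur). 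There is no rescue via positioning of the points, since Alexander--Hirschowitz already concerns general points. The paper's one-line proof passes over this silently; your more careful bookkeeping catches it, and aside from this needed correction to the hypotheses your argument is complete.
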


As we already noticed, in most cases the limit is not just a point with multiplicity. As Example \ref{3su3plo} shows, once we understand the minimum degree of a divisor containing $Z_1$, we need information on the base locus of such divisors.

When we deal with double points, it is convenient to work in the case $h>n$. Indeed, $h\le n$ yields $\mul Z_0=2$, and $ L_{n,2}(2^h)$ has a nonreduced base locus, so it is difficult to describe the conditions imposed on the limit linear system. On the other hand, if $h>n$ then we have $\mul Z_0=3$, at least for $n$ big enough, and the base locus of cubics with assigned double points is very well understood. We start with a technical result.

\begin{lem}\label{lem:indip} Let $n\geq 2$ and $l\le n+2$. Let $A:=\{a_1,\dots,a_{l}\}$ be a set of $l$ general points in $\p^n$ and let $R$ be a hyperplane such that $A\cap R=\varnothing$. Let $p_{ij}:=\langle a_i,a_j\rangle\cap R$ and
	$$B:=\{p_{ij}\mid 1\le i<j\le l\}.$$
	Then $ L_{n-1,2}(B)$ and $ L_{n-1,3}(B)$ are non-special, that is, the points of $B$ impose independent conditions to quadrics and cubics of $R$.
	\begin{proof}
		It is enough to prove the claim for $l=n+1$ for quadrics and $l=n+2$	for cubics.	$ L_{n-1,2}(B)$ is non-special by \cite[Lemma 25]{GM}.
		
		Now assume that $l=n+2$. We prove that the points of $B$ are general for cubics by induction on $n$. It is easy to check that the statement holds for $n=2$, so we assume $n\ge 3$. Specialize $a_1,\dots,a_{n+1}$ on a general hyperplane $W\subset\p^{n}$. Define
		$$B_1:=\{p_{ij}\mid 1\le i<j\le n+1\}\mbox{ and }B_2:=\{p_{1,n+2},\dots,p_{n+1,n+2}\}.$$
		Observe that the points of $B_2$ are in general position on $R$, and $B=B_1\cup B_2$. Let $H:=W\cap R=\p^{n-2}$. Castelnuovo exact sequence reads
		\[0\to\I_{B_2,R}(2)\to\I_{B,R}(3)\to\I_{B_1,H}(3)\to 0.\]
		Since $B_2$ is a set of general points of $R$, $\h^1\I_{B_2,R}(2)=0$. If we set
		$$A_1:=\{a_1,\dots,a_{n+1}\},$$
		then $A_1$ is a set of general points in $W$ and $H$ is an hyperplane of $W$ such that $A_1\cap H=\vu$. By induction hypothesis, $\h^1\I_{B_1,H}(3)=0$. Hence $\h^1\I_{B,R}(3)=0$ and so $B$ imposes independent conditions on cubics of $R$. Since the conditions are independent when $A$ and $B$ are in this specialized configuration, the statement holds by semicontinuity.
	\end{proof}
\end{lem}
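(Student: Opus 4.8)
The plan is to reduce to the extremal values of $l$, settle the quadric statement by the known configuration result, and prove the cubic statement by induction on $n$ via the restriction sequence of Definition~\ref{dfn:castelnuovo}.

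First I would observe that it is enough to treat $l=n+1$ for quadrics and $l=n+2$ for cubics. Indeed, the vanishing of $\h^1\I_{B,R}(e)$ is inherited by any subset $B'\subseteq B$, from $0\to\I_{B,R}(e)\to\I_{B',R}(e)\to\oo_{B\setminus B'}(e)\to 0$ together with $\h^1\oo_{B\setminus B'}(e)=0$, and shrinking $l$ only replaces $B$ by a subset; the one over-determined instance, $l=n+2$ for quadrics, then follows because the $l=n+1$ sub-configuration already imposes $\binom{n+1}{2}=\h^0\oo_R(2)$ independent conditions, hence lies on no quadric, hence neither does $B$, so the system is empty as its virtual dimension predicts. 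In the remaining extremal cases $\#B\le\h^0\oo_R(e)$, so independence of the conditions is literally $\h^1\I_{B,R}(e)=0$, equivalently non-speciality. For quadrics with $l=n+1$, so $\#B=\binom{n+1}{2}=\h^0\oo_R(2)$, I would invoke \cite[Lemma~25]{GM}; concretely, placing the $a_i$ at the coordinate points of $\p^n$ and writing $R=\{\sum c_ix_i=0\}$ makes the points $p_{ij}$ fully explicit and the associated incidence matrix can be checked to be invertible.

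The cubic case $l=n+2$ I would prove by induction on $n$. The small cases are disposed of by hand: for $n=2$, $R$ is a $\p^1$ and $B$ consists of at most $\binom{4}{2}=6$ points which are pairwise distinct for a general choice of $A$ (the finitely many secant lines of $A$ are distinct, and a general $R$ meets them in distinct points), so no nonzero binary cubic vanishes on $B$ and $\dim L_{1,3}(B)=\expdim L_{1,3}(B)$; the case $n=3$ is settled by a direct check in coordinates showing that the $10$ points of $B$ lie on no plane cubic. For the inductive step, specialize $a_1,\dots,a_{n+1}$ onto a general hyperplane $W\subset\p^n$, keeping $a_{n+2}$ off $W$. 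The lines $\langle a_i,a_j\rangle$ with $i<j\le n+1$ then lie in $W$, so $B_1:=\{p_{ij}\mid i<j\le n+1\}$ lands on $H:=W\cap R\cong\p^{n-2}$ and is exactly the set attached to the $n+1=(n-1)+2$ general points $a_1,\dots,a_{n+1}$ of $W$ and the hyperplane $H\subset W$; meanwhile projection from $a_{n+2}$ carries $W$ isomorphically onto $R$ and fixes $H$ pointwise, so $B_2:=\{p_{i,n+2}\mid i\le n+1\}$ is a set of $n+1$ general points of $R$ disjoint from $H$, and $B=B_1\sqcup B_2$. The restriction sequence along $H\subset R$ reads
\[0\to\I_{B_2,R}(2)\to\I_{B,R}(3)\to\I_{B_1,H}(3)\to 0.\]
Here $\h^1\I_{B_2,R}(2)=0$ because $n+1\le\binom{n+1}{2}$ general points impose independent conditions on quadrics of $R$, and $\h^1\I_{B_1,H}(3)=0$ by the inductive hypothesis applied on $W$, which yields non-speciality of $L_{n-2,3}(B_1)$ and hence the vanishing of $\h^1$ as soon as $\vdim L_{n-2,3}(B_1)\ge 0$, i.e. for $n\ge 4$. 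Thus $\h^1\I_{B,R}(3)=0$ for the specialized configuration; since $B$ stays a set of $\binom{n+2}{2}$ reduced points throughout, the family of schemes $B$ is flat, and upper semicontinuity of $\h^1$ propagates the vanishing to the general configuration.

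I expect the delicate part to be not a single estimate but the honest setup of the degeneration, combined with keeping track of virtual dimensions. One must verify that after sending $a_1,\dots,a_{n+1}$ into $W$ the scheme $B$ is still $\binom{n+2}{2}$ distinct points — distinctness of $B_1$ is part of the genericity used on $W$, distinctness of $B_2$ comes from the isomorphism $W\cong R$, and $B_1\cap B_2=\vu$ because $B_1\subset H$ whereas $B_2$ avoids $H$ — and that $B_1$ really is an instance of the same construction one dimension down, so that the inductive hypothesis applies. Moreover the system that the induction feeds on, $l=(n-1)+2$ in dimension $n-1$, has non-negative virtual cubic dimension only for $n\ge 4$, so the cases $n=2,3$ must be handled separately before the induction can run — which is exactly the role of the hand computations above.
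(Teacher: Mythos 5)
Your proposal is correct and follows the same route as the paper: reduction to the extremal cases $l=n+1$ (quadrics) and $l=n+2$ (cubics), the appeal to \cite[Lemma 25]{GM} for quadrics, and for cubics the induction on $n$ via the specialization of $a_1,\dots,a_{n+1}$ onto a hyperplane $W$ and the restriction sequence $0\to\I_{B_2,R}(2)\to\I_{B,R}(3)\to\I_{B_1,H}(3)\to 0$. The one place where you genuinely depart from the paper is also the place where you are right to do so. The paper takes only $n=2$ as base case and asserts in the inductive step that $\h^1\I_{B_1,H}(3)=0$ ``by induction hypothesis''; at $n=3$ this is false, since $B_1$ consists of $6$ points on the line $H\cong\p^1$ and $\h^0\oo_{\p^1}(3)=4$, so $\h^1\I_{B_1,H}(3)=2$. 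Concretely, in that specialized position every cubic of $R=\p^2$ through $B$ must contain $H$ and hence splits off one of the $2$-dimensional family of conics through the four general points of $B_2$, so $\h^0\I_{B,R}(3)=2>0$ and semicontinuity gives nothing. Your observation that $\vdim L_{n-2,3}(B_1)=\binom{n+1}{2}\cdot\frac{n-4}{3}$ is non-negative only for $n\ge 4$, so that $n=3$ must be an extra base case, is exactly the needed repair, and your version should be regarded as the corrected form of the paper's argument. What you should not leave as a promise is that base case itself: for $n=3$ the ten points $p_{ij}$ form a Desargues configuration in $R\cong\p^2$, and one must actually carry out the verification (explicit coordinates or a computer check, as the paper does elsewhere for similar claims) that the general such configuration lies on no cubic; since $10=\h^0\oo_{\p^2}(3)$ there is no numerical slack, so this is a genuine $10\times 10$ rank computation rather than a formality.
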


\begin{rmk}\label{rmk:nongenerici}
Even if $B$ imposes independent conditions, the points of $B$ are not in general  position.
	For every choice of $t$ points of $A$, their span is a $\p^{t-1}$, so the corresponding $\binom{t}{2}$ points of $B$ lie on a $\p^{t-2}$.
\end{rmk}

The next two Propositions, proven in \cite{GM}, solve the cases $h=n+1$ and $h=n+2$.

\begin{pro} \label{collision1} If $n\ge 2$, then the limit of $n+1$ collapsing double points in $\A^n$ is a triple point with $\binom{n+1}{2}$ tangent directions. The infinitely near simple points are in the special position described by Remark \ref{rmk:nongenerici}.
\end{pro}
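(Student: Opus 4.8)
The plan is to follow exactly the strategy laid out in the introduction: identify the multiplicity of the limit via Proposition \ref{pro:molteplicità_limite}, construct an explicit candidate subscheme, and then match degrees using Lemma \ref{stessogrado} and Corollary \ref{cor:condizioni imposte dai punti infinitamente vicini}. First I would compute $\mul Z_0$. By Proposition \ref{pro:molteplicità_limite} this equals the least $j$ with $H^0\I_{Z_1,\A^n}(j)\neq 0$, where $Z_1$ is $n+1$ general double points in $\A^n$. Since $\binom{n+1-1+n}{n}$ grows quadratically in $n$ while $\binom{n+1}{n}=n+1$ is linear, the system $L_{n,1}(2^{n+1})$ of hyperplanes double at $n+1$ general points is empty (a hyperplane cannot even be singular), whereas $L_{n,2}(2^{n+1})$ is non-special of positive dimension by Alexander--Hirschowitz (the triple $(n,2,n+1)$ is never in the special list of Theorem \ref{thm:AH}, since that requires $h\le n$), hence nonzero. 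So $k=2$, wait --- but the claimed multiplicity of $Z_0$ is $3$, so I must be careful: $\mul Z_0 = k$ where $k = \min\{j : L_{n,j}(2^{n+1})\neq 0\}$, and actually $L_{n,2}(2^{n+1})$ \emph{is} empty because $n+1 > n$ forces the double points to impose dependent-but-overdetermined conditions --- more precisely $\vdim L_{n,2}(2^{n+1}) = \binom{n+2}{2} - (n+1)(n+1) < 0$ once $n\ge 2$, and by Alexander--Hirschowitz it is non-special, hence empty. Meanwhile $L_{n,3}(2^{n+1})$ is non-special with $\vdim = \binom{n+3}{3} - (n+1)(n+1) > 0$ for $n\ge 2$, hence nonzero. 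Therefore $k = 3$ and $\mul Z_0 = 3$.

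Next I would set up the candidate scheme. In the notation of Construction \ref{cons:collapse}, $L_P \cong L_{n,3}(2^{n+1})(\sigma_{X,1}(0),\dots,\sigma_{X,n+1}(0))$, cubics in $W\cong\p^n$ double at $n+1$ general points. A classical fact is that such cubics form a system whose base locus is exactly the union of the $\binom{n+1}{2}$ lines $\ell_{ij} = \langle a_i, a_j\rangle$ spanned by pairs of the points (this is the secant-line description of the ideal of $n+1$ general points; a cubic singular at $n+1$ general points of $\p^n$ must vanish on all $\binom{n+1}{2}$ joining lines). Intersecting this base locus with $R \cong \p^{n-1}$ gives the set $B = \{p_{ij} = \ell_{ij}\cap R\}$ of $\binom{n+1}{2}$ points, which are simple points on $R$. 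Hence $(\Bs(L_P))_{|R} = B$ is a set of simple points, so Corollary \ref{cor:condizioni imposte dai punti infinitamente vicini} applies: $Z_0$ contains a triple point with $\binom{n+1}{2}$ infinitely near simple points, the points of $B$, and these lie in the special position of Remark \ref{rmk:nongenerici}.

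For the degree count I would invoke Lemma \ref{lem:indip} with $l = n+1$: the points of $B$ impose independent conditions on cubics of $R$, i.e. $L_{n-1,3}(B) = L_R$ is non-special. Wait --- I should double-check that $L_R$ really is $L_{n-1,3}(B)$ and not a proper subsystem; $L_P$ is complete by \cite[Lemma 24]{GM}, and the restriction $L_R$ consists of the degree-$3$ hypersurfaces of $R$ that are restrictions of cubics in $L_P$, whose base points include all of $B$, so $L_R \subseteq L_{n-1,3}(B)$; conversely one checks the restriction surjects onto $L_{n-1,3}(B)$ because the kernel $L_P - R$ has the expected dimension (cubics in $\p^n$ double at $n+1$ general points and containing a hyperplane). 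Granting this, Lemma \ref{lem:degree of a fat point with infinitely near simple points} gives that the triple point with these $\binom{n+1}{2}$ infinitely near simple points has degree $\binom{n+2}{n} - \h^0\I_{B,R}(3) = \binom{n+2}{n} - \bigl(\binom{n+2}{3} - \binom{n+1}{2}\bigr)$, which I would simplify and compare with $\deg Z_1 = (n+1)\binom{n+1}{n} = (n+1)^2$. These should agree; then Lemma \ref{stessogrado} forces $Z_0$ to equal the candidate.

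The main obstacle is the exactness/completeness issue in the degree count: proving that $L_R$ is the full system $L_{n-1,3}(B)$ (equivalently, that the Castelnuovo restriction sequence for $P \supset R$ is surjective on global sections), and independently verifying the arithmetic identity $\binom{n+2}{n} - \binom{n+2}{3} + \binom{n+1}{2} = (n+1)^2$ so that the degrees match on the nose. Since this Proposition is attributed to \cite{GM}, the cleanest route is to cite the base-locus description and the completeness of $L_P$ and $L_R$ from there, reducing the proof to combining Proposition \ref{pro:molteplicità_limite}, Lemma \ref{lem:indip}, Corollary \ref{cor:condizioni imposte dai punti infinitamente vicini}, Lemma \ref{lem:degree of a fat point with infinitely near simple points} and Lemma \ref{stessogrado} with a short binomial computation.
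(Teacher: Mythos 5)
Your strategy is exactly the one the paper sketches (and attributes to \cite{GM}): compute $\mul Z_0=3$ via Proposition \ref{pro:molteplicità_limite} together with Theorem \ref{thm:AH}, observe that the $\binom{n+1}{2}$ joining lines lie in the base locus of $L_P\cong L_{n,3}(2^{n+1})$ so that $Z_0$ contains a triple point with infinitely near simple points at $B=\{p_{ij}\}$ in the configuration of Remark \ref{rmk:nongenerici}, and then match degrees using Lemma \ref{lem:indip}, Lemma \ref{lem:degree of a fat point with infinitely near simple points} and Lemma \ref{stessogrado}. The one thing you must fix is the binomial in the degree count: applying Lemma \ref{lem:degree of a fat point with infinitely near simple points} with $m=3$ gives $\deg S=\binom{n+3}{n}-\h^0\I_{B,R}(3)$, not $\binom{n+2}{n}-\h^0\I_{B,R}(3)$, and since by Lemma \ref{lem:indip} the $\binom{n+1}{2}$ points of $B$ impose independent conditions on cubics of $R\cong\p^{n-1}$ this equals $\binom{n+3}{3}-\binom{n+2}{3}+\binom{n+1}{2}=\binom{n+2}{2}+\binom{n+1}{2}=(n+1)^2=\deg Z_1$. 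The identity you propose to verify is false as written (for $n=2$ it reads $6-4+3=5\neq 9$) and would have made the degrees appear not to match; with the correct coefficient everything closes. Two further remarks: the completeness of $L_R$ that you worry about is not needed, because Lemma \ref{lem:degree of a fat point with infinitely near simple points} computes the degree of the candidate in terms of the \emph{complete} system $\h^0\I_{B,R}(3)$, and all the argument requires is that $Z_0$ contains the candidate and that the candidate has degree $\deg Z_1$; similarly, you only need that $\Bs(L_P)$ \emph{contains} the joining lines (which follows from restricting a cubic with two double points to the line through them), not that it equals their union.
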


\begin{pro} \label{collision2} Let $Z_0$ be the limit of $n+2$ collapsing double points in $\A^n$.
	\begin{enumerate}
		\item If $n=2$, then $Z_0$ is a $4$-tuple point, together with the involution described in \cite[Proposition 3.1]{CM3}.
		\item If $n=3$, then $Z_0$ is a $4$-tuple point.
		\item If $n\ge 4$, then $Z_0$ is a triple point with $\binom{n+2}{2}$ tangent directions. In this case the infinitely near simple points are in the special position described by Remark \ref{rmk:nongenerici}.
	\end{enumerate}
\end{pro}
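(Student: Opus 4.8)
The plan is to argue separately according to the value of $n$, the first step in every case being to determine $m:=\mul Z_0$. By Proposition \ref{pro:molteplicità_limite}, $m=\min\{j\in\N\mid L_{n,j}(2^{n+2})\neq 0\}$. Since $n+2$ general points of $\p^n$ span the whole space, no quadric is singular at all of them, so $L_{n,2}(2^{n+2})=0$; equivalently $(n,2,n+2)$ is not on the Alexander--Hirschowitz list of Theorem \ref{thm:AH} and $\vdim L_{n,2}(2^{n+2})=\binom{n+2}{2}-(n+1)(n+2)<0$. For $n\ge 4$, the triple $(n,3,n+2)$ is again not on that list, while $\vdim L_{n,3}(2^{n+2})=\tfrac{(n-3)(n+1)(n+2)}{6}>0$, so $L_{n,3}(2^{n+2})\neq 0$ and $m=3$. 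For $n=2$ and $n=3$ the cubic system is still zero (non-special of non-positive virtual dimension) while $L_{n,4}(2^{n+2})\neq 0$, so $m=4$; in the case $n=3$ one moreover has $\deg Z_0=\deg Z_1=5\binom{4}{3}=20=\binom{6}{3}$, which is exactly the length of a $4$-tuple point of $\A^3$, and $L_{3,3}(2^5)$ is non-special of virtual dimension $0$, so Lemma \ref{lem:grassisugrasso} (equivalently Proposition \ref{pro:doppisumultiplo} with $m=4$) immediately gives that $Z_0$ is a $4$-tuple point.

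For $n\ge 4$ we have $m=3$ and $L_P\cong L_{n,3}(2^{n+2})$ as in Example \ref{3su3plo}. A cubic singular at $a_i$ and $a_j$ restricts on the line $\langle a_i,a_j\rangle\cong\p^1$ to a form of degree $\le 3$ vanishing to order $\ge 2$ at each of the two points, hence vanishing identically; thus $\bigcup_{i<j}\langle a_i,a_j\rangle\subseteq\Bs L_P$. As in Example \ref{3su3plo}, each point of $(\Bs L_P)_{|R}$ is an infinitely near point of the limit, so $Z_0$ contains the subscheme $S$ made of a triple point at $q$ with infinitely near simple points at $B:=\{\langle a_i,a_j\rangle\cap R\mid 1\le i<j\le n+2\}$, which is a set of $\binom{n+2}{2}$ distinct reduced points of $R\cong\p^{n-1}$. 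By Lemma \ref{lem:indip} with $l=n+2$, the set $B$ imposes independent conditions on cubics of $R$, so $\h^0\I_{B,R}(3)=\binom{n+2}{3}-\binom{n+2}{2}\ge 0$, and Lemma \ref{lem:degree of a fat point with infinitely near simple points} yields
\[\deg S=\binom{n+3}{3}-\h^0\I_{B,R}(3)=\binom{n+3}{3}-\binom{n+2}{3}+\binom{n+2}{2}=2\binom{n+2}{2}=(n+1)(n+2).\]
Since $\deg Z_0=\deg Z_1=(n+2)\binom{n+1}{n}=(n+1)(n+2)=\deg S$ and $S\subseteq Z_0$, Lemma \ref{stessogrado} forces $Z_0=S$; in particular $\Bs L_P$ is exactly the union of the $\binom{n+2}{2}$ lines, and the infinitely near points lie in the special position of Remark \ref{rmk:nongenerici}.

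The case $n=2$ is the real obstacle. Here $m=4$, but $\deg Z_0=4\cdot 3=12$ strictly exceeds the length $\binom{5}{2}=10$ of a $4$-tuple point, so $Z_0$ carries a length-$2$ scheme in its first infinitesimal neighbourhood $R\cong\p^1$. The clean machinery above no longer applies: a quartic double at $a_i,a_j$ meets $\langle a_i,a_j\rangle$ in a divisor of degree exactly $4=2+2$, so the lines are not forced into $\Bs L_P$, and neither Lemma \ref{lem:grassisugrasso} nor Corollary \ref{cor:condizioni imposte dai punti infinitamente vicini} is available. The way forward is to analyse $L_R$ directly: since $L_{2,3}(2^4)=0$ one has $L_P-R\cong L_{2,3}(2^4)=0$, so the restriction $L_P\to L_R$ is injective and $\dim L_R=\dim L_P=3$; a dimension count for binary quartics then shows that the base scheme of $L_R$ on $R\cong\p^1$ has length at most $2$, while $\deg Z_0=12$ forces $Z_0$ to be a $4$-tuple point together with a length-$2$ scheme on $R$. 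This length-$2$ scheme, and hence $Z_0$ itself, genuinely depends on the sections $\sigma_i$ and is the one governed by the involution of $R$ described in \cite[Proposition 3.1]{CM3}, to which I would appeal for the precise description. This is exactly where the difficulty lies: unlike in the other cases, $Z_0$ is not rigid --- it is not determined up to isomorphism by $n$ and the multiplicities alone --- so it cannot be pinned down by a degree count together with a non-speciality statement, and one genuinely needs the finer study of the restricted linear system carried out in \cite{CM3}.
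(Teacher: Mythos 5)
Your proposal is correct and follows essentially the same strategy the paper indicates for this result (which it attributes to \cite{GM}): compute $\mul Z_0$ via Proposition \ref{pro:molteplicità_limite} and Theorem \ref{thm:AH}, identify the lines $\langle a_i,a_j\rangle$ in the base locus, use Lemma \ref{lem:indip} together with Lemma \ref{lem:degree of a fat point with infinitely near simple points} and Lemma \ref{stessogrado} for $n\ge 4$, Lemma \ref{lem:grassisugrasso} for $n=3$, and defer the non-rigid $n=2$ case to \cite[Proposition 3.1]{CM3} exactly as the statement itself does. Your degree counts ($\deg S=2\binom{n+2}{2}=(n+1)(n+2)=\deg Z_1$) and the multiplicity computations all check out.
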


The proofs rely on Proposition \ref{pro:molteplicità_limite} to compute the multiplicity. Then we determine the base locus of the linear system and we apply Lemma \ref{lem:indip} to check that $ L_R$ is non-special, so that we can conclude by Corollary \ref{cor:condizioni imposte dai punti infinitamente vicini} and Lemma \ref{stessogrado}.

Despite the previous results, the limit scheme can be more complicated than a fat point with a bunch of infinitely near points. 
For instance, the limit of 5 colliding double points in the plane is described in \cite[Proposition 3.1]{CM3} as a fourtuple point with a pair of infinitely near tacnodal points.
We could try to apply the argument of Propositions \ref{collision1} and \ref{collision2} to an higher number of colliding double points. Anyway, we cannot expect the same proof to work, because Lemma \ref{lem:indip} does not hold for $l\ge n+3$. As an example, let us work out one of the exceptions of Theorem \ref{thm:AH}.

\begin{es}\label{es:7doppiinp4}
	Consider a set of general points $A:=\{a_1,\dots,a_7\}\subset\p^4$. As in Lemma \ref{lem:indip}, let $R$ be a hyperplane such that $A\cap R=\varnothing$ and $p_{ij}:=\langle a_i,a_j\rangle\cap R$. Then $B:=\{p_{ij}\mid 1\le i<j\le 7\}$ has 21 points, while $\h^0\oo_R(3)=20$, so $L_{R,3}(B)=H^0\I_{B,R}(3)$ is expected to be empty. However, we know that there is a cubic $C\subset\p^4$ singular at $a_1,\dots,a_7$. $C$ contains all the lines joining pairs of points of $A$, so in particular $C_{|R}\supset B$. Consider Castelnuovo exact sequence
	\[0\to \LL_{4,2}(2^7)\to\LL_{4,3}(2^7)\to\I_{B,R}(3)\to 0.
	\]
	Observe that $L_{4,2}(2^7)=0$, so the restriction $ L_{4,3}(2^7)\to H^0\I_{B,R}(3)$ is injective and therefore 
	$C_{|R}$ is a nonzero element of $H^0\I_{B,R}(3)$. Since $\h^0\oo_{\p^3}(3)=20$, the 21 points of $B$ impose at most 19 independent conditions on cubics of $R$. A software computation shows that $B$ actually imposes exactly 19 independent conditions.
\end{es}

More generally, let $Z_1$ be a scheme of $n+3$ double points, with $n\ge 5$. Observe that $\deg Z_1=(n+1)(n+3)$ and $\mul Z_0=3$. It is easy to see that $\Bs L_{n,3}(2^{n+3})$ consists of the double points and of the $\binom{n+3}{2}$ lines joining the pair of points. Then we have $\binom{n+3}{2}$ simple points infinitely near to the limit triple point. However, these simple points do not impose independent conditions to cubics. Indeed, if they did, then Corollary \ref{cor:condizioni imposte dai punti infinitamente vicini} would imply
\[\deg Z_0\ge\binom{n+2}{2}+\binom{n+3}{2}=n^2+4n+4=1+\deg Z_1.\]
Hence those $\binom{n+3}{2}$ simple points impose dependent conditions on $Z_0$. On the other hand, at least $\binom{n+2}{2}$ of them are independent by Lemma \ref{lem:indip}. How can we give a description of the limit is these cases?

\begin{rmk}\label{rmk:graziekris}
	Let $Z$ be an $m$-tuple point supported at $q\in\p^n$, with an infinitely near simple point, and let $l$ be the line containing $q$ corresponding to the infinitely near point. The restriction of $Z$ to a general line through $q$ is an $m$-tuple point, while $Z_{|l}$ has multiplicity $m+1$. This suggests a possible description of the limit of $n+k$ collapsing double points. Assume that $\mul Z_0=3$, and let $l_1,\dots,l_{\binom{n+k}{2}}$ be the base lines, all passing through the limit point $q$. Let $S^4_i$ be the multiplicity 4 subscheme of $l_i$ supported at $q$. We know that $Z_0$ contains the union of the $S^4_i$'s, and we conjecture that they coincide. Now we want to precisely formulate the problem and to provide a solution for small $k$.
\end{rmk}

\begin{dfn}\label{dfn:candidate}
	Let $n,m\ge 2$, and let $l_1,\dots,l_t\subset\A^n$ be lines meeting at the origin. Let $S^m_i$ be the 0-dimensional degree $m$ subscheme of $l_i$ supported at the origin, and let $ I_{S^m_i,\A^n}$ be the ideal defining $S^m_i$ in $\A^n$. Define $Z_n(l_1,\dots,l_t)$ to be the union scheme associated to the ideal $$ I_n(l_1,\dots,l_t):= I_{S^m_1,\A^n}\cap\ldots\cap I_{S^m_t,\A^n}.$$
	If $l_1,\dots,l_t$ are general lines through the origin and $m=4$, then we define $$Z_{n,t}:=Z_n(l_1,\dots,l_t)\mbox{ and }\  I_{n,t}:= I_n(l_1,\dots,l_t).$$
	When $\mul Z_{n,t}=3$, we can think of this scheme as a triple point with $t$ infinitely near simple points, representing the directions corresponding to $l_1,\dots,l_t$.
	
\end{dfn}

\begin{rmk}\label{rmk:limit_description}
	Consider $n+k$ colliding double points in $\A^n$ and assume the limit has multiplicity 3. Then the limit triple point has $\binom{n+k}{2}$ infinitely near simple points, in special position, giving possibly dependent conditions on cubics. Nevertheless, the restriction of the limit scheme to one of the $\binom{n+k}{2}$ corresponding lines $l_1,\dots,l_{\binom{n+k}{2}}$ has degree strictly greater than 3. In particular the limit scheme contains $Z_n\left( l_1,\dots,l_{\binom{n+k}{2}}\right) $. So if we prove that they have the same degree, then we get an explicit description of the limit scheme.
\end{rmk}

If we want to identify the limit of a bunch of colliding double points with some $Z_n(l_1,\dots,l_t)$, our next task is to study such schemes. First we compute the multiplicity.

\begin{lem}\label{lem:molteplicitÃ _unione}
	Let $R=\A^{n-1}$ be a general hyperplane in $\A^n$, and $p_i:=l_i\cap R$. Define $B:=\{p_1,\dots,p_t\}$ and set
	\[k:=\min\{m\in\N\mid H^0\I_{B,R}(m)\neq 0\}.\]
	Then $\mul Z_n(l_1,\dots,l_t)=\min(4,k)$.
	\begin{proof}
		First note that $\mul Z_n(l_1,\dots,l_t)$ is nondecreasing with respect to $t$. Moreover, $\mul Z_n(l_1,\dots,l_t)\le 4$ by construction. Indeed, once multiplicity 4 is reached, the restriction to any line has degree at least 4, so by adding another $S^4_i$ we do not change anything. Now let $D\subset R$ be a degree $m$ divisor containing $p_1,\dots,p_t$. The cone $C$ over $D$ with vertex the origin is a degree $m$ divisor in $\A^n$ containing $l_1,\dots,l_t$ and therefore $C\supset S^4_1\cup\ldots\cup S^4_t$. Hence the ideal of $Z_n(l_1,\dots,l_t)$ contains a generator of degree $m$ and so $\mul Z_n(l_1,\dots,l_t)\le m$. This implies $\mul Z_n(l_1,\dots,l_t)\le\min(4,k)$.
		
		On the other hand, if $\mul Z_n(l_1,\dots,l_t)=4\ge \min(4,k)$, then there is nothing else to prove. Suppose that $m:=\mul Z_n(l_1,\dots,l_t)\in\{1,2,3\}$. Then $Z_n(l_1,\dots,l_t)$ is contained in a degree $m$ divisor $C\subset\A^n$. Since $C$ has an $m$-tuple point, it is a cone. Moreover the restriction of $Z_n(l_1,\dots,l_t)$ to each $l_i$ has degree $4>m$ so $C$ contains each $l_i$, and in particular $C_{|R}$ is a degree $m$ divisor in $R$ containing $p_1,\dots,p_t$.
	\end{proof}
\end{lem}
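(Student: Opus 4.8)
The statement to prove is Lemma~\ref{lem:molteplicitÃ _unione}: with $R=\A^{n-1}$ a general hyperplane, $p_i=l_i\cap R$, $B=\{p_1,\dots,p_t\}$, and $k=\min\{m\in\N\mid H^0\I_{B,R}(m)\neq 0\}$, one has $\mul Z_n(l_1,\dots,l_t)=\min(4,k)$.

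The plan is to prove the two inequalities $\mul Z_n(l_1,\dots,l_t)\le\min(4,k)$ and $\mul Z_n(l_1,\dots,l_t)\ge\min(4,k)$ separately, using the cone construction to pass between divisors in $R$ and divisors in $\A^n$. First I would record two easy structural facts: (i) $\mul Z_n(l_1,\dots,l_t)$ is nondecreasing in $t$, since adding another ideal $I_{S^m_{t+1},\A^n}$ to the intersection only shrinks $Z_n$; and (ii) $\mul Z_n(l_1,\dots,l_t)\le 4$, because each $S^m_i=S^4_i$ has degree $4$ along $l_i$, so once a divisor has a point of multiplicity $4$ at the origin its restriction to every line through the origin already has degree $\ge 4$, hence automatically contains every $S^4_i$; thus no further conditions accrue past multiplicity $4$. (Here $m=4$ in Definition~\ref{dfn:candidate}, so each $S^m_i$ is $S^4_i$.)

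For the upper bound, let $D\subset R$ be a degree $m$ divisor through $p_1,\dots,p_t$, where $m=k$. Take the cone $C\subset\A^n$ over $D$ with vertex the origin: this is a degree $m$ divisor, it is singular of multiplicity $m$ at the origin, and since $l_i$ meets $R$ at $p_i\in D$ the whole line $l_i$ lies on $C$. Therefore $C\supset S^4_i$ for every $i$, so $C$ contains $Z_n(l_1,\dots,l_t)$, i.e. $I_n(l_1,\dots,l_t)$ has a generator of degree $m=k$, giving $\mul Z_n(l_1,\dots,l_t)\le k$. Combined with fact (ii) this yields $\mul Z_n(l_1,\dots,l_t)\le\min(4,k)$.

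For the lower bound, write $m:=\mul Z_n(l_1,\dots,l_t)$ and assume $m<4$ (otherwise $m=4\ge\min(4,k)$ and we are done). Then $Z_n(l_1,\dots,l_t)$ lies on some degree $m$ divisor $C\subset\A^n$. Since $Z_n$ contains the $m$-tuple point at the origin and $\deg C=m$, the divisor $C$ must be a cone with vertex the origin (a degree $m$ form with a point of multiplicity $m$ is homogeneous in suitable coordinates). Moreover $Z_n\cap l_i$ has degree $4>m$ for each $i$, so $C\cap l_i$ is the whole line $l_i$; hence $C_{|R}$ is a degree $m$ divisor in $R$ passing through $p_1,\dots,p_t$, which forces $k\le m$. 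Therefore $m\ge k$, and since also $m=m\ge\min(4,k)$ trivially in the remaining case, we conclude $\mul Z_n(l_1,\dots,l_t)\ge\min(4,k)$. Putting the two inequalities together proves the lemma. The only point requiring a moment's care is the claim that a degree $m$ hypersurface with a point of multiplicity exactly $m$ is a cone with vertex there — this is immediate from the fact that vanishing to order $m$ at a point kills all lower-degree terms of the defining form after translating that point to the origin, but it is the one step where one should be explicit rather than wave hands.
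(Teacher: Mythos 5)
Your proof is correct and follows essentially the same route as the paper's: the same two structural observations (monotonicity in $t$ and the cap at multiplicity $4$), the cone over a minimal-degree divisor through $B$ for the upper bound, and the observation that a degree-$m$ divisor with an $m$-fold point is a cone containing every $l_i$ for the lower bound. The one step you flag as needing care (the cone claim) is exactly the step the paper also relies on, so nothing further is required.
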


\begin{cor}\label{cor:molteplicitÃ _unione}
	Let $t\in\N$ and let $R=\A^{n-1}$ be a general hyperplane in $\A^n$. Set
	\[k:=\min\{m\in\N\mid \h^0\oo_R(m)>t\}.\]
	If $l_1,\dots,l_t$ are general lines, then $\mul Z_{n,t}=\min(4,k)$.
	\begin{proof}
		Apply Lemma \ref{lem:molteplicitÃ _unione} in the case $p_1,\dots,p_t\in R$ are general.
	\end{proof}
\end{cor}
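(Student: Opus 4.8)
The plan is to deduce the statement directly from Lemma~\ref{lem:molteplicitÃ _unione}; the only content is a genericity transfer combined with the classical behaviour of general points. First I would observe that projection from the origin identifies the lines through the origin in $\A^n$ with (a dense open subset of) the general hyperplane $R$, via $l\mapsto l\cap R$: a general line through the origin meets $R$ in exactly one point, and this assignment is dominant (in suitable affine coordinates it is literally an isomorphism onto the affine chart $R\cong\A^{n-1}$). Hence, if $l_1,\dots,l_t$ are general lines through the origin, then the points $p_i:=l_i\cap R$ are general points of $R$.

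Next I would invoke the elementary fact that a set $B$ of $t$ general points of $R\cong\p^{n-1}$ imposes independent conditions on the linear system of degree-$m$ divisors: a single general point is never a base point of a nonempty linear system, so one adds the points of $B$ one at a time and argues by induction on $t$, exactly as in the genericity arguments already used in the proof of Proposition~\ref{pro:molteplicità_limite}. Consequently
\[\h^0\I_{B,R}(m)=\max\bigl\{0,\ \h^0\oo_R(m)-t\bigr\},\]
so that $H^0\I_{B,R}(m)\neq 0$ if and only if $\h^0\oo_R(m)>t$. Therefore the integer $\min\{m\in\N\mid H^0\I_{B,R}(m)\neq 0\}$ appearing in Lemma~\ref{lem:molteplicitÃ _unione} coincides with the integer $k=\min\{m\in\N\mid \h^0\oo_R(m)>t\}$ of the present statement.

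Applying Lemma~\ref{lem:molteplicitÃ _unione} to these lines then yields $\mul Z_{n,t}=\mul Z_n(l_1,\dots,l_t)=\min(4,k)$, which is the assertion. There is no genuine obstacle here: the corollary is essentially a restatement of the lemma in the generic case, and the only step that calls for any care is the first one, namely checking that \emph{general} lines through the origin really do cut out \emph{general} points of $R$ rather than points confined to some proper closed subvariety where the independence of conditions could fail. This is guaranteed by the dominance of $l\mapsto l\cap R$.
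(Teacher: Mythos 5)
Your proof is correct and is essentially the paper's own argument: the paper's proof is the single line ``apply Lemma \ref{lem:molteplicitÃ _unione} in the case $p_1,\dots,p_t\in R$ are general,'' and you have simply spelled out the two implicit steps (general lines through the origin cut general points on $R$, and general points impose independent conditions, so the two definitions of $k$ agree). No differences in approach to report.
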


Now we want to determine the length of $Z_n(l_1,\dots,l_t)$. The next Lemma provides a way to compute it inductively.

\begin{lem}\label{lem:grado_unione} Let $n\ge 2$. Then
	\begin{enumerate}
		\item $\deg Z_n(l_1)=4$,
		\item $\deg Z_n( l_1,\dots,l_t,l_{t+1}) =\deg Z_n(l_1,\dots,l_t)+4-\deg ( Z_n(l_1,\dots,l_t)_{|l_{t+1}}) $,
		\item $\deg Z_{n,t+1}=\deg Z_{n,t}+4-\mul Z_{n,t}$.
	\end{enumerate}
	\begin{proof}
		\begin{enumerate}
			\item The degree of $Z_n(l_1)=S^4_1$ does not depend on its embedding. Regarding $S^4_1$ as a divisor in $l_1=\p^1$, it has degree 4 by construction.
			\item Let $\mu=\deg ( Z_n(l_1,\dots,l_t)_{|l_{t+1}})$. Of course $Z_n(l_1,\dots,l_t)\supset S_{t+1}^\mu$, so
			\[Z_n(l_1,\ldots,l_t)=S_1^4\cup\ldots\cup S_t^4=S_1^4\cup\ldots\cup S_t^4\cup S_{t+1}^\mu.\]
			Hence the difference $\deg Z_n( l_1,\dots,l_t,l_{t+1})-\deg Z_n(l_1,\dots,l_t)$ coincides with the difference $\deg S_{t+1}^4-\deg S_{t+1}^\mu=4-\mu$.
			\item When $l_1,\dots,l_t,l_{t+1}$ are general, the restriction of $Z_{n,t}$ to $l_{t+1}$ has degree equal to $\mul Z_{n,t}$, so it is enough to apply (2).\qedhere
		\end{enumerate}
	\end{proof}
\end{lem}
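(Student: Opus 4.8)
The plan is to get (1) for free, to make (2) the heart of the matter, and to read off (3) from (2). For (1): the length of a $0$-dimensional scheme is an intrinsic invariant, and $Z_n(l_1)$ is by definition $S^4_1$, which inside $l_1\cong\p^1$ is the degree-$4$ fat point, so $\deg Z_n(l_1)=4$.

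For (2), write $Y:=Z_n(l_1,\dots,l_t)$ and $\mu:=\deg\big(Y_{|l_{t+1}}\big)$, and choose affine coordinates so that $l_{t+1}$ is the $x_1$-axis, whence $I_{l_{t+1}}=(x_2,\dots,x_n)$ and $I_{S^s_{t+1}}=(x_2,\dots,x_n,x_1^s)$ for every $s$. Since $Y_{|l_{t+1}}$ is a length-$\mu$ scheme supported at the origin of $l_{t+1}\cong\A^1$, its ideal in $\C[x_1]$ is $(x_1^\mu)$, i.e. $I_Y+I_{l_{t+1}}=I_{S^\mu_{t+1}}$; in particular $I_Y\subseteq I_{S^\mu_{t+1}}$, so $S^\mu_{t+1}\subseteq Y$. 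I also need $\mu\le 4$: the fourth power of a general linear form vanishes to order at least $4$ along each $l_i$, hence lies in every $I_{S^4_i}$ and so in $I_Y$, while restricting to a function of order exactly $4$ on $l_{t+1}$. Therefore $I_Y=\bigcap_{i\le t}I_{S^4_i}=\big(\bigcap_{i\le t}I_{S^4_i}\big)\cap I_{S^\mu_{t+1}}$, i.e. $Y=S^4_1\cup\dots\cup S^4_t\cup S^\mu_{t+1}$, whereas $Z_n(l_1,\dots,l_{t+1})=S^4_1\cup\dots\cup S^4_t\cup S^4_{t+1}=Y\cup S^4_{t+1}$.

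To finish (2), apply inclusion--exclusion. From the Mayer--Vietoris sequence
\[0\to R/(I_Y\cap I_{S^4_{t+1}})\to R/I_Y\oplus R/I_{S^4_{t+1}}\to R/(I_Y+I_{S^4_{t+1}})\to 0,\]
with $R:=\C[x_1,\dots,x_n]$ (the quotients being finite-dimensional as the schemes are $0$-dimensional), taking $\C$-dimensions gives $\deg\big(Y\cup S^4_{t+1}\big)=\deg Y+4-\deg\big(Y\cap S^4_{t+1}\big)$; and $Y\cap S^4_{t+1}$ has ideal $I_Y+I_{S^4_{t+1}}=(I_Y+I_{l_{t+1}})+(x_1^4)=I_{S^\mu_{t+1}}+(x_1^4)=I_{S^\mu_{t+1}}$, the last step using $\mu\le 4$, so $\deg\big(Y\cap S^4_{t+1}\big)=\mu$ and the difference is $4-\mu=4-\deg\big(Y_{|l_{t+1}}\big)$. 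For (3), specialize (2) to general lines: then $l_{t+1}$ is a general line through the origin with respect to the $0$-dimensional scheme $Z_{n,t}$ supported there, and $\deg\big((Z_{n,t})_{|l_{t+1}}\big)=\mul Z_{n,t}$, because for a $0$-dimensional $X$ at the origin every $g\in I_X$ satisfies $\operatorname{ord}_0(g|_\ell)\ge\operatorname{ord}_0 g\ge\mul X$, with equality for a general $\ell$ and a suitable $g$ realizing the multiplicity (this is Remark \ref{rmk:graziekris}); substituting $\mu=\mul Z_{n,t}$ into (2) yields $\deg Z_{n,t+1}=\deg Z_{n,t}+4-\mul Z_{n,t}$.

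The step I expect to be most delicate is the scheme-theoretic bookkeeping inside (2): verifying that $Y_{|l_{t+1}}$ is genuinely the fat point $S^\mu_{t+1}$ and not merely the origin with its reduced structure, that this inclusion of ideals yields $S^\mu_{t+1}\subseteq Y$, the auxiliary bound $\mu\le 4$, and the identification $Y\cap S^4_{t+1}=S^\mu_{t+1}$. Once these ideal-theoretic identities are in hand the degree count is purely formal, and (1) and (3) are immediate.
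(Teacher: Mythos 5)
Your proof is correct and follows essentially the same route as the paper: the key point in (2) is that $Z_n(l_1,\dots,l_t)$ already contains $S^{\mu}_{t+1}$, so passing to $Z_n(l_1,\dots,l_{t+1})$ only enlarges that component from degree $\mu$ to degree $4$, and (1) and (3) are handled identically. Your Mayer--Vietoris bookkeeping, the verification that $\mu\le 4$ via the fourth power of a general linear form, and the identification $Y\cap S^4_{t+1}=S^{\mu}_{t+1}$ simply make rigorous the steps the paper dispatches with ``of course''.
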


Corollary \ref{cor:molteplicitÃ _unione} and Lemma \ref{lem:grado_unione} allow us to compute multiplicity and degree of the scheme $Z_{n,t}$ for every $n$ and $t$.
Now we consider what happens when the lines are not general. Namely, we are interested in the configuration described in Remark \ref{rmk:nongenerici}.

\begin{dfn}
	Let $\{l_{ij}\mid 1\le i<j\le m\}$ be a set of $\binom{m}{2}$ lines in $\A^n$ meeting at the origin, such that $l_{ab}$, $l_{bc}$ and $l_{ac}$ lie on the same plane for every $1\le a<b<c\le m$. Define $$\tilde{Z}_{n,\binom{m}{2}}:=Z_n(l_{ij}\mid 1\le i<j\le m).$$
\end{dfn}

\begin{rmk}\label{rmk:trivialtilda} Let $n,m\ge 2$. We start with the following simple observations.
	\begin{enumerate}
		\item $Z_{2,\binom{m}{2}}=\tilde{Z}_{2,\binom{m}{2}}$.
		\item $\tilde{Z}_{n,1}=Z_{2,1}$ and $\tilde{Z}_{n,3}=Z_{2,3}$.
		\item More generally, if $n\ge m$ then $\left\langle l_1,\dots,l_{\binom{m}{2}}\right\rangle\subseteq\A^{m-1}$. Thus $\mul \tilde{Z}_{n,\binom{m}{2}}=1$ and $\tilde{Z}_{n,\binom{m}{2}}=\tilde{Z}_{m-1,\binom{m}{2}}$.
	\end{enumerate}
\end{rmk}

We are now ready to compute multiplicity and degree of $\tilde{Z}_{n,\binom{m}{2}}$. By Remark \ref{rmk:trivialtilda}, we know multiplicity and degree of $\tilde{Z}_{2,\binom{m}{2}}$ from Lemma \ref{lem:grado_unione}. Now we tackle the cases $n=3$ and $n=4$.

\begin{es}
	The next table shows the values of $\deg\tilde{Z}_{3,\binom{m}{2}}$ and $\mul\tilde{Z}_{3,\binom{m}{2}}$.
	\begin{center}
		\begin{tabular}{|c|c|c|c|}
			\hline
			$m$	&	$t$	&	$\deg \tilde{Z}_{3,t}$	&	$\mul\tilde{Z}_{3,t}$	\bigstrut\\
			\hline
			2	&	1	&	4	&	1\\
			3	&	3	&	9	&	1\\
			4	&	6	&	16	&	3\\
			$m\ge 5$	&	$t\ge 10$	&	20&	4	\\
			\hline
		\end{tabular}
	\end{center}
	Degrees and multiplicities of $\tilde{Z}_{4,\binom{m}{2}}$ are presented in the following one.
	\begin{center}
		\begin{tabular}{|c|c|c|c|}
			\hline
			$m$	&	$t$	&	$\deg \tilde{Z}_{4,t}$	&	$\mul\tilde{Z}_{4,t}$	\bigstrut\\
			\hline
			2	&	1	&	4	&	1\\
			3	&	3	&	9	&	1\\
			4	&	6	&	16	&	1\\
			5	&	10	&	25	&	3\\
			6	&	15	&	30	&	3\\
			7	&	21	&	34	&	3\\
			$m\ge 8$	&	$t\ge 28$	&	35	&	4\\
			\hline
		\end{tabular}
	\end{center}
	In order to compute the multiplicities, it is enough to apply Remark \ref{rmk:trivialtilda} and Lemma \ref{lem:molteplicitÃ _unione}, together with Lemma \ref{lem:indip}. After that, Lemma \ref{lem:grado_unione} allows us to compute the degree. We only have to pay attention for $(n,m)=(4,7)$. Indeed,
		this is an exception of Theorem \ref{thm:AH}, and we already considered it in Example \ref{es:7doppiinp4}.
	\end{es}

If we look at $\tilde{Z}_{3,6}$ and $\tilde{Z}_{3,10}$, we see that their multiplicities and degrees are consistent with the cases of 4 and 5 collapsing double points in $\A^3$. In the same way, the numbers we found about $\tilde{Z}_{4,10}$ and $\tilde{Z}_{4,15}$ are consistent with the case of 5 and 6 colliding double points in $\A^4$.
We will try now to find a general statement about the degree and the multiplicity of $\tilde{Z}_{n,\binom{m}{2}}$. The situation is easy when $m\le n$.

\begin{pro}\label{pro:emmebasso} If $3\le m\le n$, then $\mul\tilde{Z}_{n,\binom{m}{2}}=1$ and $\deg\tilde{Z}_{n,\binom{m}{2}}
	=m^2.$
	\begin{proof}
		If $m\le n$, then $\mul\tilde{Z}_{n,\binom{m}{2}}=1$ by Remark \ref{rmk:trivialtilda}.
		We prove the statement about the degree by induction on $m$. We saw that $\deg\tilde{Z}_{n,3}=9$. Let us assume $\deg\tilde{Z}_{n,\binom{m}{2}}=m^2$ and let us compute $\deg\tilde{Z}_{n,\binom{m+1}{2}}$. $\tilde{Z}_{n,\binom{m+1}{2}}$ is obtained from $\tilde{Z}_{n,\binom{m}{2}}\subset\A^m=H$ by adding $S^4_{1,m+1},\dots,S^4_{m,m+1}$. Observe that $S^4_{1,m+1}\not\subset H$, so it increases the degree by 3; the resulting scheme is contained in some $W=\A^{m+1}$, and by adding $S^4_{1,m+1},\dots,S^4_{m-1,m+1}$, we remain inside $W$. As a subscheme of $W$, $\tilde{Z}_{n,\binom{m+1}{2}}$ has multiplicity 2, because there are only $\binom{m}{2}+m-1<\h^0\oo_{\A^{n-1}}(2)$ lines. Even if they are in special position, they are general for quadrics by Lemma \ref{lem:indip}, so each new addition of $S^4_{2,m+1},\dots,S^4_{m,m+1}$ increases the degree by $4-2=2$. Hence
		\[\deg\tilde{Z}_{n,\binom{m+1}{2}}=\deg\tilde{Z}_{n,\binom{m}{2}}+3+2(m-1)=m^2+2m+1=(m+1)^2. 
		\qedhere\]
\end{proof}\end{pro}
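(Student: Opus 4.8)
The plan is to mirror the strategy already used for the other cases: compute the multiplicity first, then build up the degree by adding the lines $l_{ij}$ one at a time and tracking how much each addition increases the length, using Lemma \ref{lem:grado_unione}(2) together with the independence statement of Lemma \ref{lem:indip}. The multiplicity is immediate: since $m\le n$, Remark \ref{rmk:trivialtilda}(3) gives $\mul\tilde{Z}_{n,\binom{m}{2}}=1$, because the $\binom{m}{2}$ lines span at most an $\A^{m-1}\subset\A^n$ and hence lie on a hyperplane (a degree $1$ divisor). So the real content is the degree formula $\deg\tilde{Z}_{n,\binom{m}{2}}=m^2$, which I would prove by induction on $m$, the base case $m=3$ being $\deg\tilde{Z}_{n,3}=9$, already recorded.

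For the inductive step, I would pass from the configuration attached to $m$ points to the one attached to $m+1$ points by adjoining the $m$ new lines $l_{1,m+1},\dots,l_{m,m+1}$ to $\tilde{Z}_{n,\binom{m}{2}}$, which lives in a linear subspace $H\cong\A^m$. The key bookkeeping is: the first new line $l_{1,m+1}$ is not contained in $H$, so by Lemma \ref{lem:grado_unione}(2) it raises the degree by $4-\deg(\tilde{Z}_{n,\binom{m}{2}}|_{l_{1,m+1}})$; since the old scheme has multiplicity $1$ (it lies on the hyperplane $H$, which meets $l_{1,m+1}$ only at the origin), that restriction has degree $1$, so the increase is exactly $3$. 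After adding $l_{1,m+1}$ the scheme sits inside a linear $W\cong\A^{m+1}$, and adjoining $l_{2,m+1},\dots,l_{m,m+1}$ keeps us inside $W$. Inside $W$ the total number of lines present is $\binom{m}{2}+m<\binom{n+1}{2}=\h^0\oo_{\A^{n}}(2)$ (more precisely, intersecting $W$ with a general hyperplane of $W$, the relevant count is against $\h^0\oo_{\A^{n-1}}(2)$, which holds since $m\le n$), so the scheme has multiplicity $2$ in $W$; the restriction of the partially-built scheme to each further $l_{i,m+1}$ has degree equal to that multiplicity $2$, hence each of the remaining $m-1$ additions raises the degree by $4-2=2$. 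Summing: $\deg\tilde{Z}_{n,\binom{m+1}{2}}=m^2+3+2(m-1)=(m+1)^2$.

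The point that needs the most care — and where I expect the only genuine obstacle — is justifying that the restriction of the intermediate scheme to each newly added line really has degree equal to the ambient multiplicity, i.e. that no ``extra'' tangency is forced by the special position of the lines described in Remark \ref{rmk:nongenerici}. This is exactly what Lemma \ref{lem:indip} is for: it says that, although the points $p_{ij}=l_{ij}\cap R$ on a general hyperplane $R$ are in the special position of Remark \ref{rmk:nongenerici}, they still impose independent conditions on quadrics (and cubics), so the partial union has the ``expected'' multiplicity $2$ in $W$ and the restriction to the next line $l_{i,m+1}$ drops to degree $2$ rather than staying at $4$. One has to check that the hypothesis $l\le n+2$ of Lemma \ref{lem:indip} is met for the relevant sub-configurations: when we add $l_{i,m+1}$ the lines involved come from at most $m+1\le n+1$ of the points $a_1,\dots,a_{m+1}$, so indeed $l\le n+1\le n+2$ and Lemma \ref{lem:indip} applies. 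With that independence in hand the degree count above is forced, completing the induction.
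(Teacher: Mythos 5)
Your proof is correct and follows essentially the same route as the paper: multiplicity $1$ from Remark \ref{rmk:trivialtilda}, then induction on $m$ with the first new line contributing $4-1=3$ and each of the remaining $m-1$ lines contributing $4-2=2$, justified by Lemma \ref{lem:grado_unione} and the quadric independence in Lemma \ref{lem:indip}. Your added explanations (why the first restriction has degree $1$, and why $m+1\le n+1$ keeps you within the hypotheses of Lemma \ref{lem:indip}) only make explicit what the paper leaves implicit.
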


Before we move to the more interesting case $m>n\ge 5$, we need some technical results. We already observed that Lemma \ref{lem:indip} does not hold in the case of more than $n+2$ points in $\p^n$, so our next goal is to understand what happens with larger numbers of points. 

\begin{lem}	\label{lem:quantecodizionidanno} For $k\in\N$, define
	$$n_k:=\min\left\lbrace t\ge 2\mid \frac{\binom{t+3}{3}}{t+1}-t>k\right\rbrace.$$
	For every $n\ge n_k$ and every $r\in\N$, let $A_r:=\{a_1,\dots,a_{r}\}\subset\p^{n}$ be a set of $r$ general points, and let $R\subset\p^n$ be a hyperplane such that $A_r\cap R=\varnothing$. Let $p_{ij}:=\langle a_i,a_j\rangle\cap R$ and 
	$$B_r:=\{p_{ij}\mid 1\le i<j\le r\}.$$	
	Assume that $B_{n_k+k}$ imposes $\binom{n_k+k}{2}-\binom{k-1}{2}$ independent conditions to cubics of $R$. Then $B_{n+k}$ impose exactly $\binom{n+k}{2}-\binom{k-1}{2}$ independent conditions to cubics of $R$ for every $n\ge n_k$.
	\begin{proof}
		We prove the statement by induction on $n\ge n_k$. The first step of induction is granted by hypothesis, so we suppose that $n>n_k$. In order to lighten the notation, throughout this proof we will write $A$ and $B$ instead of $A_{n+k}$ and $B_{n+k}$.
		Specialize $a_1,\dots,a_{n+k-1}$ on an hyperplane $W\subset\p^{n}$. Define $$B_1:=\{p_{ij}\mid 1\le i<j\le n+k-1\}\mbox{ and } B_2:=\{p_{1,n+k},\dots,p_{n+k-1,n+k}\}.$$ Let $H:=W\cap R=\p^{n-2}$. Castelnuovo exact sequence reads
		\[0\to\I_{B_2,R}(2)\to\I_{B,R}(3)\to\I_{B_1,H}(3)\to 0.\]
		First observe that the points of $B_2$ are general on $R$, so $$\h^0\I_{B_2,R}(2)=\binom{n+1}{2}-(n+k-1)\mbox{ and }\h^1\I_{B_2,R}(2)=0.$$
		Now we want to compute the dimension of the right hand side of the sequence.
		Note that $A_1:=\{a_1,\dots,a_{n+k-1}\}$ is a set of general points in $W=\p^{n-1}$, $H$ is a hyperplane of $W$ with $A_1\cap H=\vu$ and $B_1=\{\langle a_i,a_j\rangle\cap H\mid 1\le i<j\le n+k-1\}$, so by induction hypothesis $$\h^0\I_{B_1,H}(3)=\binom{n+1}{3}-\binom{n+k-1}{2}+\binom{k-1}{2}.$$
		Therefore
		\begin{align}
			\h^0\I_{B,R}(3)&=\h^0\I_{B_2,R}(2)+\h^0\I_{B_1,H}(3)\nonumber\\
			&=\binom{n+1}{2}-(n+k-1)+\binom{n+1}{3}-\binom{n+k-1}{2}+\binom{k-1}{2}\nonumber\\
			&=\binom{n+2}{3}-\binom{n+k}{2}+\binom{k-1}{2}.\nonumber
		\end{align}
		Since the points of $B$ impose $\binom{n+k}{2}-\binom{k-1}{2}$ conditions in this specialized configuration, they impose at least $\binom{n+k}{2}-\binom{k-1}{2}$ conditions in the original configuration. We already noticed they cannot impose more than $\binom{n+k}{2}-\binom{k-1}{2}$ conditions.
	\end{proof}
\end{lem}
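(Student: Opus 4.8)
The plan is to prove the statement by induction on $n \ge n_k$, exactly as set up in the Lemma, with the base case $n = n_k$ supplied by hypothesis. Throughout I would adopt the abbreviation $A := A_{n+k}$, $B := B_{n+k}$, and the inductive step is where all the work happens: we must pass from the known count for $n-1$ to the count for $n$. Since imposing conditions on cubics of a hyperplane $R \cong \p^{n-1}$ is an upper-semicontinuous invariant (the number of independent conditions can only drop under specialization), it suffices to exhibit \emph{one} specialization of the configuration $A$ in which $B$ imposes at least $\binom{n+k}{2} - \binom{k-1}{2}$ independent conditions; combined with the trivial upper bound (the $\binom{n+k}{2}$ points always satisfy the $\binom{k-1}{2}$ relations forced by Remark~\ref{rmk:nongenerici} applied to any $k-1$ of the $a_i$, wait — more precisely the count cannot exceed the target, which is what the last sentence of the intended proof asserts), equality follows.

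The concrete degeneration I would use is to specialize $a_1, \dots, a_{n+k-1}$ onto a general hyperplane $W \subset \p^n$, keeping $a_{n+k}$ general. Set $H := W \cap R \cong \p^{n-2}$, split $B = B_1 \cup B_2$ where $B_1 = \{p_{ij} \mid 1 \le i < j \le n+k-1\}$ consists of the points coming from pairs inside $W$ (these all lie on $H$) and $B_2 = \{p_{i,n+k} \mid 1 \le i \le n+k-1\}$ comes from the pairs involving $a_{n+k}$. The key geometric observations are: (i) the $n+k-1$ points of $B_2$ are in \emph{general} position on $R \cong \p^{n-1}$, since $a_{n+k}$ is general and projection from it is generic; (ii) $A_1 := \{a_1, \dots, a_{n+k-1}\}$ is a set of $(n-1)+k$ general points of $W \cong \p^{n-1}$, $H$ is a hyperplane of $W$ disjoint from $A_1$, and $B_1$ is precisely the induced configuration $B_{(n-1)+k}$ for dimension $n-1$ — so the induction hypothesis applies to it. Now run the Castelnuovo (restriction) sequence for $R$ along $H$:
\[0 \to \I_{B_2, R}(2) \to \I_{B, R}(3) \to \I_{B_1, H}(3) \to 0,\]
where the left term is twisted down by one because $B_2 \subset R \setminus H$ forces a cubic vanishing on $B$ and containing $H$ to factor as (equation of $H$) times a quadric through $B_2$. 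From (i), $\h^1 \I_{B_2,R}(2) = 0$ and $\h^0 \I_{B_2,R}(2) = \binom{n+1}{2} - (n+k-1)$; from (ii) and the induction hypothesis, $\h^0 \I_{B_1,H}(3) = \binom{n+1}{3} - \binom{n+k-1}{2} + \binom{k-1}{2}$. Adding and using the Pascal identities $\binom{n+1}{2} + \binom{n+1}{3} = \binom{n+2}{3}$ and $(n+k-1) + \binom{n+k-1}{2} = \binom{n+k}{2}$ gives $\h^0 \I_{B,R}(3) = \binom{n+2}{3} - \binom{n+k}{2} + \binom{k-1}{2}$, i.e. $B$ imposes exactly $\binom{n+k}{2} - \binom{k-1}{2}$ conditions in the specialized configuration, hence at least that many in the original one; the matching upper bound closes the argument.

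The main obstacle — and the step requiring the most care — is the surjectivity of the restriction map $\I_{B,R}(3) \to \I_{B_1,H}(3)$, equivalently the vanishing $\h^1 \I_{B_2,R}(2) = 0$, together with the correct identification of the \emph{kernel} as $\I_{B_2,R}(2)$ rather than a larger ideal. One has to verify that every cubic on $R$ through $B_1 \subset H$ (and through $B_2$, but that's automatic on $H$ side) actually lifts to a cubic through all of $B$: the obstruction lives in $\h^1$ of the kernel sheaf, which is $\I_{B_2,R}(2)$ precisely because a cubic vanishing on the whole hyperplane $H$ is (equation of $H$) times a quadric, and such a cubic passes through $B$ iff that quadric passes through $B_2$. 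That $B_2$ is $n+k-1$ general points in $\p^{n-1}$ and $n+k-1 \le \h^0\oo_{\p^{n-1}}(2) = \binom{n+1}{2}$ in the relevant range (which must be checked to hold for all $n \ge n_k$, using the definition of $n_k$) gives $\h^1 \I_{B_2,R}(2)=0$ by the non-specialty of general points for quadrics; that is the only quantitative input beyond bookkeeping. I would also double-check the edge contribution from pairs $\{a_i, a_{n+k}\}$ versus $\{a_i, a_j\}$ with $i,j \le n+k-1$ to be sure $B = B_1 \sqcup B_2$ is a genuine disjoint decomposition into the stated sizes $\binom{n+k-1}{2}$ and $n+k-1$.
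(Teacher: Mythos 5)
Your proposal is correct and follows essentially the same route as the paper's own proof: induction on $n$, specializing $a_1,\dots,a_{n+k-1}$ onto a hyperplane $W$, splitting $B=B_1\sqcup B_2$, and running the Castelnuovo sequence $0\to\I_{B_2,R}(2)\to\I_{B,R}(3)\to\I_{B_1,H}(3)\to 0$ with the generality of $B_2$ on one side and the induction hypothesis on the other, then concluding by semicontinuity against the known upper bound. The extra verifications you flag (identification of the kernel, the inequality $n+k-1\le\binom{n+1}{2}$, which does follow from $n\ge n_k$) are sound and only make explicit what the paper leaves implicit.
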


Lemma \ref{lem:quantecodizionidanno} provides an inductive way to prove that $B$ imposes the suitable number of conditions on cubics of $R$. However, in order to apply it we need the first step of induction for every $k$. While we are not able to prove this first step in general, we believe this is the right way to compute the number of independent conditions imposed by $B$.

\begin{con}\label{con:n+k}
	Assume $0\le k<\frac{\binom{n+3}{3}}{n+1}-n$. Let $A:=\{a_1,\dots,a_{n+k}\}$ be a set
	of $n+k$ general points in $\p^n$ and $R$ a hyperplane such that $A\cap R=\varnothing$. Let
	$$B:=\{\langle a_i,a_j\rangle\cap R\mid 1\le i<j\le n+k\}.$$
	Then the points of $B$ impose exactly $\binom{n+k}{2}-\binom{k-1}{2}$ independent conditions to cubics of $R$.
\end{con}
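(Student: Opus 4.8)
First I would recast Conjecture \ref{con:n+k} as a cohomological identity. Write $X:=2a_1+\dots+2a_{n+k}\subset\p^n$ for the scheme of double points. Since $R$ is disjoint from $X$, the restriction sequence of Definition \ref{dfn:castelnuovo} reads $0\to\I_{X,\p^n}(2)\to\I_{X,\p^n}(3)\to\oo_R(3)\to 0$, and the image of $H^0L_{n,3}(2^{n+k})$ in $H^0\oo_R(3)$ lands inside $H^0\I_{B,R}(3)$, because a cubic singular at $a_i$ and at $a_j$ contains the line $\Span{a_i,a_j}$, hence the point $p_{ij}$. As $n+k>n$, the system $L_{n,2}(2^{n+k})$ is not one of the exceptions of Theorem \ref{thm:AH} and has negative virtual dimension, so it vanishes; thus $H^0L_{n,3}(2^{n+k})\hookrightarrow H^0\oo_R(3)$ is injective. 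Inside the admissible range $k<\frac{\binom{n+3}{3}}{n+1}-n$ the virtual dimension of $L_{n,3}(2^{n+k})$ is positive and the exception $(4,3,7)$ never occurs, so $\h^0L_{n,3}(2^{n+k})=\binom{n+3}{3}-(n+k)(n+1)$, which a routine binomial identity rewrites as $\binom{n+2}{3}-\binom{n+k}{2}+\binom{k-1}{2}$ — exactly the conjectural value of $\h^0\I_{B,R}(3)$. Hence the conjecture is equivalent to the assertion that \emph{every cubic of $R$ through $B$ extends to a cubic of $\p^n$ singular at all the points of $A$.}

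I would prove this equivalent statement by induction on $k$. For $k\le 2$ one has $|A|\le n+2$ and the conclusion is Lemma \ref{lem:indip}. For the step, set $A'=\{a_1,\dots,a_{n+k-1}\}$ (again a general set) and $B'=\{\Span{a_i,a_j}\cap R\mid i<j\le n+k-1\}$, so that $B=B'\sqcup\{\Span{a_i,a_{n+k}}\cap R\mid i\le n+k-1\}$. Applying the first paragraph with $k-1$ in place of $k$ (still in the admissible range, so $L_{n,3}(2^{n+k-1})$ has its expected positive dimension and $L_{n,2}(2^{n+k-1})=0$), the inductive hypothesis forces $H^0L_{n,3}(2^{n+k-1})\to H^0\I_{B',R}(3)$ to be an \emph{isomorphism}; so a cubic $G$ of $R$ through $B$ lifts to a unique $\tilde F\in H^0L_{n,3}(2^{n+k-1})$. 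Choosing coordinates with $R=\{x_0=0\}$ and affine representatives of the $a_i$ with first coordinate $1$, one has $\Span{a_i,a_{n+k}}\cap R=[a_i-a_{n+k}]$, and polarising the cubic form $\tilde F$ while using $\tilde F(a_i)=0$, $\nabla\tilde F(a_i)=0$ gives $\tilde F(a_i-a_{n+k})=\nabla\tilde F(a_{n+k})\cdot a_i-\tilde F(a_{n+k})$. Since $G$ passes through all the $\Span{a_i,a_{n+k}}\cap R$ and, by Euler's relation, $\tilde F(a_{n+k})=\tfrac13\nabla\tilde F(a_{n+k})\cdot a_{n+k}$, the vector $\nabla\tilde F(a_{n+k})$ is orthogonal to the $n+k-1\ge n+1$ vectors $a_i-\tfrac13a_{n+k}$, whose span is all of $\C^{n+1}$ for general $a_i$. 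Therefore $\nabla\tilde F(a_{n+k})=0$, so $\tilde F$ is singular at $a_{n+k}$ as well, i.e.\ $\tilde F\in H^0L_{n,3}(2^{n+k})$ and $G=\tilde F_{|R}$; this closes the induction.

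If correct, this route settles the conjecture in its full range using only Lemma \ref{lem:indip}, Theorem \ref{thm:AH}, the Castelnuovo sequence and elementary polarisation, and it bypasses the base case $n=n_k$ of the induction of Lemma \ref{lem:quantecodizionidanno} which the authors flag as the obstruction. The conceptual step is the reformulation in the first paragraph; the one place where the admissibility hypothesis $k<\frac{\binom{n+3}{3}}{n+1}-n$ is genuinely used is the claim that the inductive hypothesis for $k-1$ upgrades to the surjectivity of $H^0L_{n,3}(2^{n+k-1})\to H^0\I_{B',R}(3)$, which is formal only once $L_{n,3}(2^{n+k-1})$ has exactly its expected \emph{positive} dimension — a property that holds strictly inside the range but collapses on its boundary, the phenomenon behind the exclusion of $(4,3,7)$ and Example \ref{es:7doppiinp4}. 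Re-examining that upgrade, and verifying that the general-position hypotheses for the $(n+k-1)$-point subconfigurations are implied by those for the ambient configuration, is the part I would want to scrutinise most; I do not see a true obstruction there, but it is the natural spot for one to hide.
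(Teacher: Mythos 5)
Your argument is correct as far as I can verify, and it does considerably more than the paper: the statement is left there as an open conjecture, established only for $k\le 2$ via Lemma \ref{lem:indip} and, with a computer check of the base case of Lemma \ref{lem:quantecodizionidanno}, for $k\le 4$. Your reformulation is sound: the restriction map $L_{n,3}(2^{n+k})\to H^0\oo_R(3)$ is injective because $L_{n,2}(2^{n+k})=0$ for $n+k>n$ by Theorem \ref{thm:AH}, its image lies in $H^0\I_{B,R}(3)$, and the identity $\binom{n+3}{3}-(n+k)(n+1)=\binom{n+2}{3}-\binom{n+k}{2}+\binom{k-1}{2}$ checks out, so the conjecture is exactly surjectivity onto $H^0\I_{B,R}(3)$. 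The upgrade of the inductive hypothesis to an isomorphism is formal: injectivity gives $\dim L_{n,3}(2^{n+k-1})\le\h^0\I_{B',R}(3)=\vdim L_{n,3}(2^{n+k-1})\le\dim L_{n,3}(2^{n+k-1})$, so equality holds throughout (and Theorem \ref{thm:AH} in degree $3$ is not even needed at the intermediate stages). The genuinely new ingredient is the polarisation step: writing $T$ for the symmetric trilinear form of $\tilde F$, one has $\tilde F(a_i-a_{n+k})=\tilde F(a_i)-\nabla\tilde F(a_i)\cdot a_{n+k}+\nabla\tilde F(a_{n+k})\cdot a_i-\tilde F(a_{n+k})$; the singularity at $a_i$ kills the first two terms, and Euler's relation then makes $\nabla\tilde F(a_{n+k})$ annihilate the $n+k-1\ge n+1$ vectors $a_i-\tfrac13a_{n+k}$, which do span $\C^{n+1}$ because their pairwise differences $a_i-a_j$ already span $\{x_0=0\}$ (linear independence of $a_1,\dots,a_{n+1}$) and $a_1-\tfrac13a_{n+k}$ has first coordinate $\tfrac23\neq 0$. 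Hence $\nabla\tilde F(a_{n+k})=0$ and, by Euler again, $\tilde F(a_{n+k})=0$. Your induction thus runs on $k$ with $n$ fixed, adding one point at a time, whereas Lemma \ref{lem:quantecodizionidanno} runs on $n$ with $k$ fixed and is blocked by its unproven base case $n=n_k$; your route replaces that base case by Lemma \ref{lem:indip} at $k=2$. A strong consistency check: your induction actually propagates the isomorphism $H^0L_{n,3}(2^{n+k})\cong H^0\I_{B,R}(3)$ for every $k\ge 2$, not only in the conjectured range, and at the Alexander--Hirschowitz exception $(n,k)=(4,3)$ this predicts $\h^0\I_{B,R}(3)=1$, i.e.\ $19$ conditions, exactly the value computed in Example \ref{es:7doppiinp4}.

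Because this would settle a conjecture the paper cannot prove, I examined closely the two places where an error could hide: the surjectivity upgrade (verified above) and the spanning claim (which needs only that $a_1,\dots,a_{n+1}$ be linearly independent, hence holds for general $A$, and is where the hypothesis $n+k-1\ge n+1$, i.e.\ $k\ge 2$, enters). I found no gap. Before claiming the conjecture, I would isolate the polarisation argument as a standalone lemma (``a cubic singular at $a_1,\dots,a_m$, $m\ge n+1$ spanning, which vanishes at $\langle a_i,b\rangle\cap R$ for all $i$ is singular at $b$'') and run an independent computer verification of a first genuinely new case such as $(n,k)=(5,3)$.
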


By applying Lemma \ref{lem:quantecodizionidanno}, it is easy to prove that Conjecture \ref{con:n+k} holds for $k\in\{0,1,2\}$, and in this way we recover some of the results of Lemma \ref{lem:indip}. Moreover, a software computation allows us to prove the first step for $k\le 4$ as well.

\begin{rmk}
	Assume Conjecture \ref{con:n+k} is true. Then we have a way to compute degree and multiplicity of $\tilde{Z}_{n,\binom{m}{2}}$. Indeed, assume that
	\begin{equation}\label{eq:range k}
	1\le k<\frac{\binom{n+3}{3}}{n+1}-n\mbox{ and } (n,k)\neq (4,3).
	\end{equation}
	On one hand, $\mult\tilde{Z}_{n,\binom{n+k}{2}}\ge 3$ by Lemma \ref{lem:molteplicitÃ _unione}. On the other hand, we observed in Remark \ref{rmk:graziekris} that $\tilde{Z}_{n,\binom{n+k}{2}}$ is a subscheme of the limit of $n+k$ collapsing double points, which has multiplicity 3 because $k$ is in the range (\ref{eq:range k}). Hence $\mul\tilde{Z}_{n,\binom{n+k}{2}}=3$. Then, by Lemma \ref{lem:degree of a fat point with infinitely near simple points}, its degree is	
	\[\deg\tilde{Z}_{n,\binom{n+k}{2}}=\binom{n+2}{n}+\binom{n+k}{2}-\binom{k-1}{2}=(n+1)(n+k),\]
	where the last equality can be proven by induction on $k$.
	Therefore, under this assumption, the limit of $n+k$ collapsing double points in $\A^n$ is $\tilde{Z}_{n,\binom{n+k}{2}}$. Since we know that Conjecture \ref{con:n+k} holds for small values of $k$, this improves Propositions \ref{collision1} and \ref{collision2}.
	However, this approach only works in the range (\ref{eq:range k}).	When $k\le 0$, the limit scheme has multiplicity 2. As we already pointed out, the linear system $ L_{n,2}(2^{n+k})$ has nonreduced base locus, and this makes it difficult to understand the first order neighbourhood of the limit point. On the other hand, when $k+n\ge\frac{\binom{n+3}{3}}{n+1}$, the limit scheme has multiplicity at least 4 and the base locus may not give us information. It is enough to consider $(n,k)=(3,3)$ to bump into the linear system $ L_{3,4}(2^6)$, which has no base locus outside the imposed singularities. Our work on infinitely near points gives us no clue in this type of cases.
\end{rmk}

One could argue in a similar way with higher multiplicities and hope to find other cases in which there are base lines. For instance, we could work with triple points, and we know that the lines joining a pair of triple points are in the base locus of quintics.
\begin{es}\label{es:sporadici1} 
	Consider 5 collapsing 5-ple points in $\A^3$. Since $ L_{3,8}(5^4)$ is empty, the limit has multiplicity 9 by Proposition \ref{pro:molteplicità_limite}. The base locus of $ L_{3,9}(5^4)$ consists of 10 lines which cut 10 simple points on the exceptional divisor $R$. They are in the special position described by Remark \ref{rmk:nongenerici}, but still they impose independent conditions on 9-ics of $R$ by Lemma \ref{lem:indip}. The limit scheme has degree 175 and contains a 9-tuple point with 10 infinitely near simple points. Since the latter has degree 175 by Lemma \ref{lem:degree of a fat point with infinitely near simple points}, they coincide by Lemma \ref{stessogrado}.
\end{es}
Unfortunately, this strategy works only if we know the degree of the linear system we are dealing with. By Proposition \ref{pro:molteplicità_limite}, this is equivalent to compute the smallest degree of a divisor in $\p^n$ containing a bunch of general multiple points. The answer is unknown in general. 
Moreover, Corollary \ref{cor:condizioni imposte dai punti infinitamente vicini} does not work when the infinitely near points have multiplicity greater than 1.

It is also worth to mention that, given a scheme $X\subset\A^n$ made by a triple point with $t$ tangent directions, in general we cannot produce $X$ as a limit of double points. Indeed, first we need that $t=\binom{n+k}{n}$ for some $k$ in the range (\ref{eq:range k}). Moreover, the tangent directions have to be in the special position described in Remark \ref{rmk:nongenerici}. It is legitimate to wonder if there are more conditions to be met in order to express $X$ as a limit of double points. In other words, can we lift $X$ to a bunch of double points in such a way that $X$ is the limit of those colliding points, under the previous assumptions?
We will now give a positive answer to this question.
Remark \ref{rmk:nongenerici} describes the configurations of the points in the exceptional divisor and suggests the following definition.

\begin{dfn} Let $n\ge 2$ and $t\ge 3$. Define
	\[W_{n,t}=\left\lbrace (x_{ij})_{1\le i<j\le t}\in (\p^n)^{\binom{t}{2}}\mid x_{bc}\in\langle x_{ab}, x_{ac}\rangle\ \forall\ 1\le a<b<c\le t\right\rbrace .\]
	If we look at $R=\p^n$ as a general hyperplane in $\p^{n+1}$, there is a rational map
	\[\pi_{n,t}:\left( \p^{n+1}\right) ^t\dashrightarrow W_{n,t}\subset (\p^n)^{\binom{t}{2}}\]
	defined by sending $(p_1,\dots,p_t)$ to $(x_{ij})_{1\le i<j\le t}$, where $x_{ij}$ is the intersection of the line $\langle p_i,p_j\rangle$ with $R$.
\end{dfn}

For $1\le k\le 4$, we know that the limit of $n+k$ double points in $\p^n$ is a triple point with $\binom{n+k}{2}$ infinitely near simple points. The simple points form a $\binom{n+k}{2}$-tuple $(x_{ij})_{1\le i<j\le n+k}\in W_{n,n+k}$. We want to understand whether such schemes can be obtained as limits of double points. This is equivalent to ask if $\pi_{n,n+k}$ is dominant. We will prove the following result.

\begin{thm}\label{thm:lift}
	$\pi_{n,t}
	$ is dominant for every $n\ge 2$ and every $t\ge 3$. The general fiber has dimension $n+2$.
\end{thm}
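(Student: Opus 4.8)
The plan is to show that the rational map $\pi_{n,t}\colon(\p^{n+1})^t\dashrightarrow W_{n,t}$ is dominant by a dimension count together with a tangent-space computation at a well-chosen point, and to identify the generic fiber. First I would compute $\dim W_{n,t}$. A point of $W_{n,t}$ is a $\binom{t}{2}$-tuple $(x_{ij})$ of points in $\p^n$ subject to the collinearity constraints $x_{bc}\in\langle x_{ab},x_{ac}\rangle$ for all triples. One can parametrize $W_{n,t}$ explicitly: choose $x_{12},x_{13},\dots,x_{1t}$ freely in $\p^n$ (that is $(t-1)n$ parameters), and then each remaining $x_{ij}$ with $2\le i<j$ is forced to lie on the line $\langle x_{1i},x_{1j}\rangle\cong\p^1$, contributing one parameter each, for $\binom{t-1}{2}$ more parameters. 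Hence $\dim W_{n,t}=(t-1)n+\binom{t-1}{2}$. On the other hand, $\dim(\p^{n+1})^t=t(n+1)$. The expected fiber dimension is therefore $t(n+1)-(t-1)n-\binom{t-1}{2}=tn+t-tn+n-\binom{t-1}{2}=t+n-\binom{t-1}{2}$; for this to match the claimed fiber dimension $n+2$ we would need $\binom{t-1}{2}=t-2$, which fails for $t\ge 4$. So a naive count is \emph{not} what drives the theorem — the point is that $W_{n,t}$ is \emph{not} irreducible of the expected dimension, or rather the relevant component has a different dimension; I would instead argue that $\dim W_{n,t}=t(n+1)-(n+2)$ on the component meeting the image of $\pi_{n,t}$, and that $\pi_{n,t}$ surjects onto this component.

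Concretely, I would proceed as follows. Step one: exhibit a point $(p_1,\dots,p_t)\in(\p^{n+1})^t$ at which $\pi_{n,t}$ is defined and compute the rank of its differential. A natural choice is to take $p_1,\dots,p_t$ in general position in $\p^{n+1}$. The image $(x_{ij})$ then consists of the points $\langle p_i,p_j\rangle\cap R$, which by Remark~\ref{rmk:nongenerici} and Lemma~\ref{lem:indip} (for small $t$) or Lemma~\ref{lem:quantecodizionidanno} / Conjecture-free cases sit in the expected special position. Step two: identify the fiber through such a point. The fiber over $(x_{ij})$ consists of all $t$-tuples $(p_1,\dots,p_t)$ such that $\langle p_i,p_j\rangle\cap R=x_{ij}$ for all $i<j$; equivalently, each $p_i$ must lie on the cone of lines determined by the $x_{ij}$. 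I expect the fiber to be an orbit under a natural $(n+2)$-dimensional group — the obvious candidate being the group of projective transformations of $\p^{n+1}$ fixing $R$ pointwise, i.e. the group of "homotheties/translations" relative to the hyperplane $R$, which has dimension $(n+1)+1=n+2$ (an $\A^{n+1}$ of translations plus one scaling). One checks this group acts on each fiber, freely and transitively on the component containing a general point, giving fiber dimension exactly $n+2$.

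Step three: combine. Once the generic fiber of $\pi_{n,t}$ is shown to have dimension exactly $n+2$ and the source is irreducible of dimension $t(n+1)$, the image is irreducible of dimension $t(n+1)-(n+2)$, and to conclude dominance it remains to check that this image is all of $W_{n,t}$ (or of the component of $W_{n,t}$ we care about) — which follows by showing $W_{n,t}$ is irreducible of that same dimension, using the explicit parametrization above but now accounting correctly for the constraints (the point being that fixing $x_{12},\dots,x_{1t}$ does \emph{not} leave $\binom{t-1}{2}$ free parameters once one also demands consistency of all the $\binom{t}{3}$ coplanarity conditions — the honest dimension is smaller). The main obstacle, as I see it, is precisely this last bookkeeping: pinning down $\dim W_{n,t}$ rigorously, because $W_{n,t}$ is cut out by many redundant equations and one must either produce a clean rational parametrization of the relevant component or compute the tangent space to $W_{n,t}$ at a general point of the image directly. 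I would attack it by the tangent-space route: compute $d\pi_{n,t}$ at a general $(p_1,\dots,p_t)$, show its kernel is exactly the tangent space to the $(n+2)$-dimensional group orbit (hence rank $=t(n+1)-(n+2)$), and show its image spans the tangent space to $W_{n,t}$ at the corresponding point; since $\pi_{n,t}$ then has locally surjective differential onto a variety of the right dimension, it is dominant, and the generic fiber dimension is $n+2$ by the fiber-dimension theorem. Small cases ($t=3$, where $W_{n,3}\cong\p^n\times\p^n\times\p^n$ restricted to coplanar triples) can be checked by hand to seed the induction or to sanity-check the group action.
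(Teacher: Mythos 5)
There are two genuine gaps here, one computational and one structural. First, your parametrization of $W_{n,t}$ is incorrect: after fixing $x_{12},\dots,x_{1t}$, the remaining points $x_{ij}$ with $2\le i<j$ are \emph{not} each free on the line $\langle x_{1i},x_{1j}\rangle$. The collinearity constraints for triples $(a,b,c)$ with $a\ge 2$ force $x_{jk}$ (for $3\le j<k$) to be the single point $\langle x_{1j},x_{1k}\rangle\cap\langle x_{2j},x_{2k}\rangle$ (these two lines meet because all four points lie in the plane $\langle x_{12},x_{1j},x_{1k}\rangle$), so only the $t-2$ points $x_{2i}$ contribute one free parameter each. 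Hence $\dim W_{n,t}=n(t-1)+(t-2)=t(n+1)-(n+2)$, exactly as in Observation \ref{obs:configuraz}(1) — there is no "unexpected component" to chase. You eventually posit this number, but you never derive it, and you flag it yourself as "the main obstacle"; it is in fact the easy part.

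Second, and more seriously, dominance is never established. Your observation that the $(n+2)$-dimensional group of projectivities of $\p^{n+1}$ fixing $R$ pointwise acts on the fibers is correct and useful, but it only gives the inequalities $\dim(\mbox{fiber})\ge n+2$ and hence $\dim\overline{\im\pi_{n,t}}\le t(n+1)-(n+2)=\dim W_{n,t}$; it says nothing about equality. To close the argument you assert that "one checks" the action is transitive on the component of a general fiber, or alternatively that $d\pi_{n,t}$ surjects onto $T W_{n,t}$ — but each of these claims is essentially equivalent to the theorem itself and neither is carried out. What is missing is an actual lift of a general point of $W_{n,t}$. The paper supplies it constructively: for $t=n+2$ one takes, for each $i$, a general hyperplane $\Pi_i\subset\p^{n+1}$ containing $L_i:=\langle x_{jk}\mid j,k\neq i\rangle$ and sets $p_j:=\bigcap_{i\neq j}\Pi_i$; for $t>n+2$ one inducts on $t$, defining $p_t:=\langle p_1,x_{1t}\rangle\cap\langle p_2,x_{2t}\rangle$ and verifying $\langle p_i,p_t\rangle\cap R=x_{it}$ from the coplanarity relations of Observation \ref{obs:configuraz}(2). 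Without such an explicit construction (or a completed, not merely proposed, tangent-space computation), your argument does not prove dominance.
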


Let us start with some simple observations.

\begin{obs}\label{obs:configuraz}
	\begin{enumerate}
		\item We have $\dim W_{n,t}=n(t-1)+t-2$. Indeed, one can choose freely $t-1$ general points $x_{12},\dots,x_{1t}\in \p^n$. Then, for $i\in\{3,\dots,t\}$, it is possible to choose the $t-2$ points $x_{2i}$ general on $\langle x_{12},x_{1i}\rangle$. After that, for $3\le j<k\le t$, the other points $x_{jk}$ are defined by $\langle x_{1j},x_{1k}\rangle\cap\langle x_{2j},x_{2k}\rangle$.
		\item Assume that $t\ge 4$ and let $(x_{ij})_{1\le i<j\le t}\in W_{n,t}$. For $1\le a<b<c\le t$, let $l_{abc}$ be the line containing $x_{ab},x_{ac},x_{bc}$. Note that $l_{abc}$ and $l_{bcd}$ meet at $x_{bc}$, so they span a plane containing $l_{acd}$ and $l_{abd}$ as well. This plane therefore passes through the 6 points $\{x_{ij}\mid i,j\in\{a,b,c,d\},i<j\}$. By the same argument, if $t\ge m$ then for every choice of $m$ indexes $1\le i_1<\ldots<i_m\le t$ the $\binom{m}{2}$ points $\{x_{ij}\mid i,j\in\{i_1,\dots,i_m\}\}$ lie on the same $\p^{m-2}$.
		\item In particular, if $t\le n+1$ then $p_1,\dots,p_t\in\p^{n+1}$ lie on a linear subspace $L=\p^{t-1}$. Hence the $\binom{t}{2}$ points $\langle p_i,p_j\rangle\cap R$ all lie on $L\cap R=\p^{t-2}$. 
		Then $W_{n,t}=W_{t-2,t}$, and $\pi_{n,t}$ restricts to $\pi_{t-2,t}:L^t=(\p^{t-1})^t\dashrightarrow W_{t-2,t}$. For this reason, from now on we will assume $t\ge n+2$.
	\end{enumerate}
\end{obs}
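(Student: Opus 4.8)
The plan is to exploit the group of affine homotheties of $\p^{n+1}$ that fix the hyperplane $R$ pointwise, and to show that its orbits are exactly the general fibres of $\pi_{n,t}$. Write $\p^{n+1}\setminus R=\A^{n+1}$, so that a point of $R$ is a direction in $\A^{n+1}=\C^{n+1}$. For $p_i,p_j\in\A^{n+1}$ the line $\langle p_i,p_j\rangle$ meets $R$ in the direction of the vector $p_i-p_j$, so the $(i,j)$-entry of $\pi_{n,t}(p_1,\dots,p_t)$ is simply the class of $p_i-p_j$ in $R=\p^n$. Let
\[G=\{\,x\mapsto\lambda x+b\mid\lambda\in\C^*,\ b\in\C^{n+1}\,\}\]
be the group of homotheties of $\A^{n+1}$; these are precisely the automorphisms of $\p^{n+1}$ fixing $R$ pointwise, and $\dim G=n+2$. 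The group $G$ acts diagonally on $(\A^{n+1})^t\subset(\p^{n+1})^t$.

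First I would record that $\pi_{n,t}$ is constant on $G$-orbits. If $g(x)=\lambda x+b$ then $g(p_i)-g(p_j)=\lambda(p_i-p_j)$ spans the same direction, so $\pi_{n,t}(g\cdot p)=\pi_{n,t}(p)$; hence every $G$-orbit lies in a fibre. Next, $G$ acts freely on the dense locus of tuples having two distinct entries: a homothety fixing $p_i\neq p_j$ forces $\lambda(p_i-p_j)=p_i-p_j$, so $\lambda=1$ and then $b=0$. Since $t\ge 3$, a general tuple has distinct entries, so its orbit has dimension $\dim G=n+2$; thus the general fibre of $\pi_{n,t}$ has dimension at least $n+2$.

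The heart of the argument is the reverse inclusion: for general $p$ the fibre of $\pi_{n,t}$ through $p$ is exactly $G\cdot p$. Suppose $(p_1',\dots,p_t')$ has the same image. After applying a translation in $G$ I may assume $p_1'=p_1=0$, and I fix representatives $v_{ij}\in\C^{n+1}$ of the directions $x_{ij}$. Then $p_j'\parallel v_{1j}$, say $p_j'=\mu_j v_{1j}$ for $j\ge 2$. For $2\le j<k$ the defining relations of $W_{n,t}$ give $x_{jk}\in\langle x_{1j},x_{1k}\rangle$, i.e. $v_{jk}=a_{jk}v_{1j}+b_{jk}v_{1k}$; imposing $p_j'-p_k'\parallel v_{jk}$ and using that $v_{1j},v_{1k}$ are independent yields the single scalar equation $\mu_j b_{jk}+\mu_k a_{jk}=0$. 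For general $p$ all coefficients $a_{jk},b_{jk}$ are nonzero, so these equations determine every ratio $\mu_j:\mu_k$; as $p$ itself is a solution they are consistent, hence $(\mu_2,\dots,\mu_t)$ is determined up to a common scalar $\lambda$. Therefore $p_j'=\lambda p_j$, that is $(p_j')=g\cdot(p_j)$ with $g(x)=\lambda x$; combined with the translation, the fibre is the single orbit $G\cdot p$, of dimension exactly $n+2$.

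Finally I would conclude by the fibre--dimension theorem. The domain $(\p^{n+1})^t$ is irreducible of dimension $(n+1)t$, and $W_{n,t}$ is irreducible of dimension $n(t-1)+t-2$ by the parametrization in Observation \ref{obs:configuraz}; note the identity $(n+1)t-(n+2)=n(t-1)+t-2$. Since the general fibre has dimension $n+2$, the closure of the image has dimension $(n+1)t-(n+2)=\dim W_{n,t}$, and being an irreducible subvariety of $W_{n,t}$ of full dimension it equals $W_{n,t}$. Thus $\pi_{n,t}$ is dominant with general fibre of dimension $n+2$. The main obstacle is the reconstruction step of the third paragraph: proving that the planarity relations rigidify the configuration so that the only ambiguity is a translation together with a global rescaling. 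This is exactly where genericity (pairwise independence of the $v_{1j}$ and non-vanishing of the $a_{jk},b_{jk}$) and the full strength of the defining equations of $W_{n,t}$ are needed, and it is what upgrades the easy inequality $\dim(\text{fibre})\ge n+2$ to an equality.
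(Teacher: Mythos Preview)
Your write-up does not address the statement you were asked to prove. Observation~\ref{obs:configuraz} is a collection of three elementary remarks, each justified inline: part~(1) computes $\dim W_{n,t}$ by exhibiting an explicit parametrisation (choose $x_{12},\dots,x_{1t}$ freely, then $x_{2i}$ on the appropriate lines, then everything else is forced); part~(2) observes that the lines $l_{abc}$ pairwise intersect and hence span a plane; part~(3) is a linear-span remark. None of this requires the homothety group $G$, fibre dimensions, or any reconstruction argument. You even \emph{cite} Observation~\ref{obs:configuraz} as an input in your final paragraph, so you are manifestly proving something else.

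What you have actually written is a proof of Theorem~\ref{thm:lift}, that $\pi_{n,t}$ is dominant with general fibre of dimension $n+2$. Your argument for that theorem is correct and genuinely different from the paper's. The paper proceeds by an explicit inductive construction: for $t=n+2$ it builds a preimage of a general $(x_{ij})$ by choosing hyperplanes $\Pi_i\supset L_i$ and intersecting them (Lemma~\ref{lem:sollevamento1}), and for larger $t$ it appends one new point $p_t$ at a time. Your approach is instead a dimension count: you identify the group $G$ of automorphisms of $\p^{n+1}$ fixing $R$ pointwise, show it acts freely on general tuples and that its orbits exhaust the general fibres, and then match $(n+1)t-(n+2)$ with $\dim W_{n,t}$. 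This is cleaner and more conceptual---it explains \emph{why} the fibre has dimension $n+2$ rather than merely verifying it---while the paper's method has the advantage of producing an explicit preimage for any given general point of $W_{n,t}$. One small gap in your version: concluding dominance from equality of dimensions requires $W_{n,t}$ to be irreducible, which you invoke but do not justify; the parametrisation in part~(1) of the Observation does give a dominant rational map from an irreducible variety, so this is fine, but it should be said.
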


The next Lemma is the first step towards the proof of Theorem \ref{thm:lift}.

\begin{lem}\label{lem:sollevamento1}
	$\pi_{n,n+2}:(\p^{n+1})^{n+2}\dashrightarrow W_{n,n+2}$ is dominant for every $n\ge 2$. The general fiber has dimension $n+2$.
	\begin{proof}
		Let $x=(x_{ij})_{1\le i<j\le n+2}\in W_{n,n+2}$ be general. 
		For $i\in\{1,\dots,n+2\}$, let $L_i=\Span{x_{jk}\mid j,k\neq i}$ be the dimension $n-1$ linear subspace of $R=\p^n$ obtained by choosing all indexes except $i$.
		Let $\Pi_i\subset\p^{n+1}$ be a general hyperplane containing $L_i$. For $j\in\{1,\dots,n+2\}$, define the point
		\[p_j:=\bigcap_{i\neq j}\Pi_i.\]
		If $k,h\in\{1,\dots,n+2\}$ and $h\neq k$, then $p_{k}$ and $p_{h}$ are distinct points of the line $\bigcap_{i\neq k,h}\Pi_i$, so \[\langle p_h,p_k\rangle\cap R=\bigcap_{i\neq k,h}\Pi_i\cap R=\bigcap_{i\neq k,h}L_i,\]
		which is one of the $x_{ij}$'s. Then, up to reorder, $(p_1,\dots,p_{n+2})$ is a preimage of $(x_{ij})_{1\le i<j\le n+2}$.
		
		To determine the dimension of the general fiber, we can either note that for each of the $n+2$ points $p_i$ we chose a hyperplane $\Pi_i$ in the pencil of those containing $L_i$, or we can compute the difference $\dim (\p^{n+1})^{n+2}-\dim W_{n,n+2}$.
	\end{proof}
\end{lem}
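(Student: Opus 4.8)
The plan is to make rigorous the explicit construction sketched for a general point $x=(x_{ij})\in W_{n,n+2}$, whose only gaps are three general-position assertions. First I would record the linear-algebra skeleton: for each index $i$ set $L_i=\Span{x_{jk}\mid j,k\neq i}\subset R=\p^n$, the span of the $\binom{n+1}{2}$ coordinates whose labels both avoid $i$. By Observation \ref{obs:configuraz}(2), applied to the $n+1$ indices different from $i$, these points lie on a $\p^{n-1}$, so $\dim L_i\le n-1$; the first point to verify is that for general $x$ this is an equality, i.e. $L_i$ is an honest hyperplane of $R$. Granting this, a general hyperplane $\Pi_i\subset\p^{n+1}$ in the pencil through $L_i$ is distinct from $R$, whence $\Pi_i\cap R=L_i$, since $L_i\subseteq\Pi_i\cap R$ and both are hyperplanes of $R$.

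Next I would set $p_j:=\bigcap_{i\neq j}\Pi_i$ and check that the $n+2$ hyperplanes $\Pi_i$ are in sufficiently general position in $\p^{n+1}$, so that any $n+1$ of them meet in a single point (making each $p_j$ a point) and any $n$ of them meet in a line. Granting this, for $h\neq k$ both $p_h$ and $p_k$ lie on the line $\bigcap_{i\neq h,k}\Pi_i$, so $\Span{p_h,p_k}$ is exactly that line and
\[\Span{p_h,p_k}\cap R=\Big(\bigcap_{i\neq h,k}\Pi_i\Big)\cap R=\bigcap_{i\neq h,k}(\Pi_i\cap R)=\bigcap_{i\neq h,k}L_i.\]
Since the pair $(h,k)$ avoids every index $i\neq h,k$, the point $x_{hk}$ lies in each such $L_i$, hence in the intersection; and because the latter is the meet of $n$ hyperplanes of $R=\p^n$, once it is $0$-dimensional it can only be $x_{hk}$. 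Thus $\pi_{n,n+2}(p_1,\dots,p_{n+2})=x$ up to reordering, which exhibits the general $x$ in the image and proves dominance.

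The main obstacle is exactly the chain of general-position claims: that each $L_i$ spans a full $\p^{n-1}$, and that the $n$ hyperplanes $\{L_i\mid i\neq h,k\}$ meet in a single point (equivalently, that $\bigcap_{i\neq h,k}L_i$ is $0$-dimensional). These are open conditions on $x\in W_{n,n+2}$, so it suffices to produce one configuration where they all hold, and I would obtain such a configuration by starting from $n+2$ points $p_j\in\p^{n+1}$ in linear general position: there $L_i=M_i\cap R$, where $M_i=\Span{p_\ell\mid \ell\neq i}$ is a hyperplane of $\p^{n+1}$, the $M_i$ are again in general position, and so $\bigcap_{i\neq h,k}L_i=\big(\bigcap_{i\neq h,k}M_i\big)\cap R$ is a line met with $R$, a single point. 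Invoking openness then transfers this to general $x$. Finally, the fiber dimension needs no further work: the hyperplanes through a fixed $\p^{n-1}$ in $\p^{n+1}$ form a pencil $\cong\p^1$, so the construction leaves one free parameter in each of the $n+2$ choices of $\Pi_i$, giving fiber dimension $n+2$; this matches the count $\dim(\p^{n+1})^{n+2}-\dim W_{n,n+2}=(n+1)(n+2)-n(n+2)=n+2$, using $\dim W_{n,n+2}=n(n+2)$ from Observation \ref{obs:configuraz}(1).
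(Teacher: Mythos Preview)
Your proof is correct and follows exactly the paper's construction: define $L_i\subset R$, lift to hyperplanes $\Pi_i\subset\p^{n+1}$, and recover $p_j=\bigcap_{i\neq j}\Pi_i$. You are simply more explicit than the paper about the general-position claims (that each $L_i$ has dimension $n-1$ and that $\bigcap_{i\neq h,k}L_i$ is a single point), verifying them by exhibiting the configuration coming from $n+2$ points in linear general position and invoking openness, whereas the paper asserts these facts without comment.
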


One could give the definition of $W_{1,t}$ and $\pi_{1,t}$ as well. However, we are computing limits under the assumption that $n\ge 2$. Moreover $W_{1,t}=(\p^1)^{\binom{t}{2}}$, so the case $n=1$ is not very interesting for us.

We are now ready to prove the result we claimed.
\begin{proof}[Proof of Theorem \ref{thm:lift}]
	As we noticed in Observation \ref{obs:configuraz}, we may assume that $t\ge n+2$. We argue by induction on $t$.
	The case $t=n+2$ is the content of Lemma \ref{lem:sollevamento1}, so we focus on the case $t>n+2$.
	
	Let $(x_{ij})_{1\le i<j\le t}\in W_{n,t}$ be general. By induction hypothesis exist $t-1$ general points $p_1,\dots,p_{t-1}\in\p^{n+1}$ such that $\langle p_i,p_j\rangle\cap R=x_{ij}$. Define
	$$p_t:=\langle p_1,x_{1t}\rangle\cap\langle p_2,x_{2t}\rangle.$$ We have to make sure that $\langle p_i,p_t\rangle$ meets $R$ at $x_{it}$ for every $i\in\{3,\dots,t-1\}$. Observe that
	\[\langle x_{1i},x_{1t}\rangle=\langle p_1,x_{1i},x_{1t}\rangle \cap R=\langle p_1,p_i,p_t\rangle\cap R,\]
	because $p_t\in\langle p_1,x_{1t}\rangle$ by construction. Hence
	\begin{align}
		\langle p_i,p_t\rangle \cap R&=\left( \langle p_1,p_i,p_t\rangle\cap \langle p_2,p_i,p_t\rangle\right) \cap R\nonumber\\
		&=\left( \langle p_1,p_i,p_t\rangle\cap R\right) \cap\left( \langle p_2,p_i,p_t\rangle\cap R\right)\nonumber\\
		&=\langle x_{1i},x_{1t}\rangle\cap \langle x_{2i},x_{2t}\rangle=x_{it}.\nonumber
	\end{align}
	The general fiber has dimension $\dim\left( \p^{n+1}\right) ^t-\dim W_{n,t}=n+2$.
\end{proof}

In terms of collision, this means that if $t\in\{n+1,\dots,n+4\}$, then every subscheme of $\p^n$ made by a triple point with $\binom{t}{2}$ infinitely near simple points $x_{ij}$ such that $(x_{ij})_{1\le i<j\le t}$ is a general point of $W_{n,t}$ can be obtained as a limit of $t$ collapsing double points in $\p^{n+1}$. If Conjecture \ref{con:n+k} is true, the same holds for the collision of $t$ double points, where
\[n+1\le t<\frac{\binom{n+3}{3}}{n+1}.\]

\section{Higher multiplicities}\label{sec:othercollisions}\label{sec:lowdimension}

Degenerations are widely used in interpolation theory to compute the dimension of linear systems. The most studied cases are dimension 2 and 3, where there are conjectures about the reasons why a linear system is special. For $n\in\{2,3\}$, all known special linear systems $L_{n,d}(m_1,\dots,m_r)$ have a base locus containing a particular variety. Roughly speaking, what those conjectures state is that the only geometric reason for a linear system to be special is the existence of such a \textit{special effect variety} in its base locus. The precise definition of special effect varieties can be found in 
\cite{Boc1}. Some examples of special effect varieties are known (see \cite{BDP1} and \cite{BDP2}) and the hard problem is to classify all of them.
We will not look into special effect varieties, but we will apply some of the known results in interpolation theory to describe limits of colliding multiple points.
As a first example, we can easily extend Proposition \ref{pro:doppisumultiplo} to higher multiplicity.

\begin{pro}\label{pro:n-pli_su_m-plo}\label{omogenei_su_multiplo_P^3} Let $l,m\in\N$, let $n\in\{2,3\}$ and define
	$$h:=\frac{\binom{m+n-1}{n}}{\binom{l+n-1}{n}}.$$
	Assume that $h\in\N$.
	\begin{enumerate}
		\item Consider $n=2$. If $l,m\le 42$ and $h\ge 10$, then the limit of $h$ collapsing $l$-tuple points in $\A^2$ is an $m$-tuple point.
		\item Consider $n=3$. If $l\le 5$ and $m\ge 2l+1$, then the limit of $h$ collapsing $l$-tuple points in $\A^3$ is an $m$-tuple point.
	\end{enumerate}
	\begin{proof}
		\begin{enumerate}
			\item Observe that $ L_{2,m-1}(l^h)$ is a system of plane curves with $h\ge 10$ fixed points of the same multiplicity, hence \cite[Conjecture 5.10]{Cil} predicts that it is non-special. For $l,m\le 42$ the conjecture is proven true in \cite[Theorem 32]{Du}. Now we conclude by Lemma \ref{lem:grassisugrasso}.
			\item By Lemma \ref{lem:grassisugrasso}, it is enough to prove that $ L_{3,m-1}(l^h)$ is non-special. If $m\ge 12$, we conclude by \cite[Theorem 1]{BBCS}. Let us check the remaining cases.
	If $l=5$, we only have to check $m=11$, but in this case $h\notin\N$. If $l=4$, we have to consider $m\in\{9,10,11\}$. Only $m=10$ gives an integer $h$, and $L_{3,9}(4^{11})$ is non-special by \cite[Theorem 14]{BB}. Finally, if $l=3$ we have to check $7\le m\le 11$. If $m\in\{7,9,11\}$ then $h\notin\N$. A software computation shows that $L_{3,7}(3^{12})$ and $L_{3,9}(3^{22})$ are non-special.\qedhere
	\end{enumerate}	\end{proof}
\end{pro}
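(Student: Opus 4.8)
The plan is to reduce the whole statement to Lemma \ref{lem:grassisugrasso}. Take all the multiplicities equal to $l$ and write $m':=m-1$. The hypothesis $h=\binom{m+n-1}{n}/\binom{l+n-1}{n}$ says precisely that
\[
h\binom{l+n-1}{n}=\binom{m+n-1}{n}=\binom{m'+n}{n},
\]
which is exactly the numerical condition of Lemma \ref{lem:grassisugrasso} applied to the multiplicities $(l,\dots,l)$ ($h$ of them) and target value $m'$. Hence, as soon as we know that $L_{n,m-1}(l^h)$ is non-special, Lemma \ref{lem:grassisugrasso} tells us that the limit of the $h$ colliding $l$-tuple points in $\A^n$ is a $(m'+1)=m$-tuple point. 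So the entire proof amounts to invoking the right non-speciality results for equimultiple systems on $\p^2$ and on $\p^3$.

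\emph{Case $n=2$.} The system $L_{2,m-1}(l^h)$ is a system of plane curves of degree $m-1$ with $h\ge 10$ assigned points all of the same multiplicity $l$. Ciliberto's conjecture \cite[Conjecture 5.10]{Cil} predicts that such a system is non-special whenever there are at least $10$ equimultiple points, and this prediction is a theorem of Dumnicki \cite[Theorem 32]{Du} in the range where the degree and the multiplicity are at most $42$. One applies it to $L_{2,m-1}(l^h)$ — here is where the hypotheses $l,m\le 42$ enter, since the relevant degree is $m-1$ — and then concludes by Lemma \ref{lem:grassisugrasso}.

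\emph{Case $n=3$.} We must show $L_{3,m-1}(l^h)$ is non-special when $l\le 5$ and $m\ge 2l+1$, i.e. when the degree $d:=m-1$ satisfies $d\ge 2l$. I would split according to the size of $d$. For $d\ge 11$ (that is $m\ge 12$) one invokes the asymptotic non-speciality theorem for equimultiple systems in $\p^3$, \cite[Theorem 1]{BBCS}. For $l\in\{1,2\}$ the conclusion is elementary, being a consequence of Alexander--Hirschowitz (Theorem \ref{thm:AH}) together with Proposition \ref{pro:doppisumultiplo}. There remain only finitely many triples $(l,m)$ with $l\in\{3,4,5\}$ and $2l+1\le m\le 11$; a short arithmetic check shows that $h=\binom{m+2}{3}/\binom{l+2}{3}$ is an integer only for $L_{3,9}(4^{11})$, $L_{3,7}(3^{12})$ and $L_{3,9}(3^{22})$, so in all other cases there is nothing to prove. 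The first of these is non-special by \cite[Theorem 14]{BB}; the two remaining systems with triple points, having only a handful of base points, are checked directly with a Macaulay2 computation. In each surviving case Lemma \ref{lem:grassisugrasso} then gives the claim.

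The reduction to Lemma \ref{lem:grassisugrasso} is purely formal, so there is no genuine geometric difficulty; the work is entirely logistical. The main obstacle is to match the validity ranges of the quoted interpolation results to our systems: one has to be careful about the shift between ``the limit is an $m$-tuple point'' and ``the relevant linear system has degree $m-1$'' (which is why the bound reads $m\le 42$ rather than $m-1\le 42$ in part (1)), and about the bookkeeping of exactly which small triples $(l,m)$ in the three-dimensional case produce an integer $h$ and hence actually require an argument. Beyond that, one must be willing to rely on the software verification for the two sporadic $\p^3$ systems with triple points.
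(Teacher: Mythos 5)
Your proposal is correct and follows essentially the same route as the paper: reduce everything to Lemma \ref{lem:grassisugrasso} via the identity $h\binom{l+n-1}{n}=\binom{(m-1)+n}{n}$, then certify non-speciality of $L_{2,m-1}(l^h)$ by \cite[Theorem 32]{Du} and of $L_{3,m-1}(l^h)$ by \cite[Theorem 1]{BBCS} for $m\ge 12$, with the same finite list of leftover cases ($L_{3,9}(4^{11})$ by \cite[Theorem 14]{BB}, and $L_{3,7}(3^{12})$, $L_{3,9}(3^{22})$ by computer). Your explicit remark that $l\in\{1,2\}$ is handled by Alexander--Hirschowitz and Proposition \ref{pro:doppisumultiplo} is a small completeness point the paper leaves implicit, but it does not change the argument.
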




Observe that in point (2) the assumption $m\ge 2l+1$ is necessary. For instance, consider $l=4$ and $m=8$. Then $h=6$ and we deal with $ L_{3,7}(4^6)$. We can study it by applying a standard Cremona transformation in $\p^3$. By \cite[Proposition 2.1]{LU},
$$ L_{3,7}(4^6)\cong L_{3,5}(4^2,2^4)\cong L_{3,3}(2^4)$$
is not empty, so $\mul Z_0=7<m$.


Up to now, we mostly considered collisions of points with the same multiplicities, but of course there are many other cases in which we can determine the limit $Z_0$. 
%
The next two Propositions will deal with the case of a fat point colliding together with a bunch of low multiplicity points. 

\begin{pro}\label{pro:grossoesemplici}
	Let $m,n\ge 2$ and let $h:=\binom{n+m-1}{n}$. Then the limit of $h$ simple points and a point of multiplicity $m$ colliding in $\A^n$ is a $(m+1)$-tuple point.
	\begin{proof}
		Since $m\ge 2$, $ L_{n,m}(m,1^h)=0$. On the other hand, $ L_{n,m+1}(m,1^h)\neq 0$, so $\mul Z_0=m+1$ by Proposition \ref{pro:molteplicità_limite}. To conclude, observe that a $(m+1)$-tuple point has degree $\binom{n+m}{m}=\deg Z_1$, so $Z_0$ is a $(m+1)$-tuple point.
	\end{proof}
\end{pro}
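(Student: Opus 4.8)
The plan is to use the paper's standard recipe: pin down $\mul Z_0$ via Proposition~\ref{pro:molteplicità_limite}, write down the evident candidate, and conclude by matching lengths through Lemma~\ref{stessogrado}.

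First I would compute the multiplicity. Let $Z_1$ be a general configuration in $\A^n$ consisting of one $m$-tuple point $mp_0$ together with $h$ simple points, so by Proposition~\ref{pro:molteplicità_limite} the number $\mul Z_0$ is the least $j$ with $L_{n,j}(m,1^h)\ne 0$; since a hypersurface of degree $<m$ cannot carry an $m$-fold point, it suffices to examine $j=m$ and $j=m+1$. For $j=m+1$ the system is nonempty: its virtual dimension $\binom{n+m+1}{n}-\binom{n+m-1}{n}-h=\binom{n+m+1}{n}-\binom{n+m}{n}$ is strictly positive, so $\dim L_{n,m+1}(m,1^h)\ge\expdim L_{n,m+1}(m,1^h)>0$. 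The crux is that $L_{n,m}(m,1^h)=0$. Here I would use that a degree-$m$ hypersurface of $\p^n$ with a point of multiplicity $m$ at $p_0$ must be a cone with vertex $p_0$ (in affine coordinates centred at $p_0$, a polynomial of degree $\le m$ whose lowest-order term has degree $m$ is homogeneous of degree $m$). Projecting away from $p_0$ thus identifies $L_{n,m}(m)$ with $H^0\oo_{\p^{n-1}}(m)$ and turns the $h$ general simple points of $Z_1$ into $h$ general points of $\p^{n-1}$. As $h=\h^0\oo_{\p^{n-1}}(m)$ and this many general points impose independent conditions on degree-$m$ forms of $\p^{n-1}$ (feed them in one at a time, each dropping the dimension as long as it is positive), the cone system is killed, so $L_{n,m}(m,1^h)=0$ and $\mul Z_0=m+1$. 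In particular $Z_0$ contains the $(m+1)$-tuple point $S$ at the collision point.

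Then I would finish with a degree count: by flatness $\deg Z_0=\deg Z_1=\binom{n+m-1}{n}+h$, which by Pascal's rule equals $\binom{n+m}{n}=\binom{n+m}{m}=\deg S$. Since $S\subseteq Z_0$ are $0$-dimensional schemes supported at a single point with the same degree, Lemma~\ref{stessogrado} yields $Z_0=S$, which is the claim.

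The main obstacle will be the vanishing $L_{n,m}(m,1^h)=0$; everything else is bookkeeping. It hinges on the cone description of degree-$m$ hypersurfaces that are singular to order $m$, together with the elementary but essential fact that exactly $\h^0\oo_{\p^{n-1}}(m)$ general points impose independent conditions on the complete degree-$m$ linear system of $\p^{n-1}$. With that in hand, the length match in the last step makes the identification immediate.
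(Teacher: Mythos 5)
Your proof follows the paper's argument step for step: you pin down $\mul Z_0$ via Proposition \ref{pro:molteplicità_limite} by showing $L_{n,m}(m,1^h)=0$ and $L_{n,m+1}(m,1^h)\neq 0$, and then you conclude by matching lengths through Lemma \ref{stessogrado}. In fact you supply the one justification the paper omits, namely the cone/projection argument for the vanishing of the degree-$m$ system, and that argument is correct.

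The one point to flag concerns the value of $h$. You assert $h=\h^0\oo_{\p^{n-1}}(m)$ and invoke Pascal to get $\binom{n+m-1}{n}+h=\binom{n+m}{n}$; both of these require $h=\binom{n+m-1}{n-1}$, whereas the statement defines $h=\binom{n+m-1}{n}$, the length of the $m$-fold point. Since $\binom{n+m-1}{n}=\frac{m}{n}\binom{n+m-1}{n-1}$, the two agree only when $m=n$. With the literal $h=\binom{n+m-1}{n}$ the proposition itself fails for $m\neq n$: if $m<n$ the cone system $L_{n,m}(m,1^h)$ is nonzero (too few points to kill $\h^0\oo_{\p^{n-1}}(m)$ sections), so $\mul Z_0\le m$; if $m>n$ then $\deg Z_1=2\binom{n+m-1}{n}>\binom{n+m}{n}$, so $Z_0$ strictly contains the $(m+1)$-fold point. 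The paper's own proof, like yours, only closes with $h=\binom{n+m-1}{n-1}=\binom{n+m-1}{m}$, so the displayed exponent in the statement is evidently a typo. Once $h$ is read that way, your argument is complete, and every step — the positivity of $\vdim L_{n,m+1}(m,1^h)=\binom{n+m}{n-1}$, the independence of the $h$ projected points in $\p^{n-1}$, and the final degree count — checks out.
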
	

\begin{pro}
	Let $m,n\ge 3$ and $(m,n)\notin\{(4,3),(3,5)\}$. Suppose that $h:=\frac{\binom{m+n-1}{n-1}}{n}\in\N$. Then the limit of $h$ double points and a point of multiplicity $m$ colliding in $\A^n$ is a $(m+1)$-tuple point with $h$ infinitely near general simple points.
	\begin{proof}
		By hypothesis \begin{align}
			\vdim L_{n,m+1}(m,2^h) &=\binom{n+m+1}{n}-\binom{n+m-1}{n}-h(n+1)\nonumber\\
			=&\binom{n+m+1}{n}-\binom{n+m-1}{n}-\frac{\binom{m+n-1}{n-1}}{n}(n+1)\nonumber\\
			=&\binom{n+m+1}{n}-\binom{n+m-1}{n}-\binom{m+n-1}{n-1}-\frac{\binom{m+n-1}{n-1}}{n}\nonumber\\
			=&\frac{(n+m+1)!}{n!(m+1)!}-\frac{(n+m-1)!}{n!(m-1)!}-\frac{(m+n-1)!}{(n-1)!m!}-\frac{(m+n-1)!}{m!n!}\nonumber\\
			=&\frac{(n+m-1)!}{(n-1)!(m-1)!}\left[\frac{(n+m+1)(n+m)}{(m+1)mn}-\frac{1}{n}-\frac{1}{m}-\frac{1}{mn}\right]\nonumber\\
			=&\frac{(n+m-1)!}{(n-1)!(m-1)!}\left[\frac{n^2 + mn - m - 1}{(m^2 + m)n}\right]>0,\nonumber
		\end{align}
		hence $ L_{n,m+1}(m,2^h)$ is not empty. On the other hand,
		\[\binom{n+m-1}{n-1}-hn=\binom{n+m-1}{n-1}-\binom{m+n-1}{n-1}=0,\]
		so $ L_{n,m}(m,2^h)\cong L_{n-1,m}(2^h)$ is expected to be empty. The latter is non-special by Theorem \ref{thm:AH}, so $\mul Z_0=m+1$ by Proposition \ref{pro:molteplicità_limite}. The $h$ general lines joining the $m$-tuple point and one of the double points are contained in the base locus of $ L_{n,m+1}(m,2^h)$, and they cut $h$ simple points on $R$. The candidate limit scheme is a $(m+1)$-tuple point with $h$ infinitely near general simple points, which by Lemma \ref{lem:degree of a fat point with infinitely near simple points} has length $\binom{n+m-1}{n}+h=\deg Z_1$.
	\end{proof}
\end{pro}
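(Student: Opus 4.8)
The plan is to follow the same template used in the preceding propositions: compute the multiplicity of the limit via Proposition~\ref{pro:molteplicità_limite}, identify the base locus of the relevant cubic-or-higher system to produce a candidate scheme, and match degrees using Lemma~\ref{stessogrado}. Concretely, let $Z_1$ consist of one general $m$-tuple point together with $h$ general double points in $\A^n$, so $\deg Z_1=\binom{n+m-1}{n}+h(n+1)$. First I would verify that $L_{n,m}(m,2^h)$ is empty: restricting the system to a general hyperplane through the $m$-tuple point (or, more precisely, observing that a degree-$m$ form with a point of multiplicity $m$ is a cone, hence equivalent to imposing the $h$ double points on a $\p^{n-1}$), one gets $L_{n,m}(m,2^h)\cong L_{n-1,m}(2^h)$, which has virtual dimension $0$ by the chosen value of $h$ and is non-special by the Alexander--Hirschowitz Theorem~\ref{thm:AH}, provided $(n-1,m,h)$ avoids the exceptional list --- this is exactly what the hypotheses $m,n\ge 3$ and $(m,n)\notin\{(4,3),(3,5)\}$ are arranged to guarantee. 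Then I would check that $L_{n,m+1}(m,2^h)\ne 0$, which the displayed computation of $\vdim L_{n,m+1}(m,2^h)$ does by showing this virtual dimension is strictly positive; hence $\mul Z_0=m+1$ by Proposition~\ref{pro:molteplicità_limite}.

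Next I would describe the first infinitesimal neighbourhood of the limit point. Every divisor in $L_{n,m+1}(m,2^h)$ has a point of multiplicity $m+1$ at $q$, while it meets each of the $h$ lines $\ell_i$ joining $q$ to the $i$-th double point with total intersection multiplicity at least $(m+1)+2 > m+1$, so each $\ell_i$ is a base line. These $h$ base lines are in general position (the double points are general), so they cut out $h$ general simple points on the exceptional divisor $R\cong\p^{n-1}$. The candidate limit scheme $S$ is therefore an $(m+1)$-tuple point with $h$ infinitely near general simple points; it is a subscheme of $Z_0$ because $Z_0$ contains the $(m+1)$-tuple point and its restriction to each $\ell_i$ has length $>m+1$. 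By Lemma~\ref{lem:degree of a fat point with infinitely near simple points}, since $h$ general simple points impose independent conditions on degree-$(m+1)$ forms on $R$ (which needs $h\le\h^0\oo_{\p^{n-1}}(m+1)$, a bound that should follow easily from $h=\binom{m+n-1}{n-1}/n$ and $m\ge 3$), we get $\deg S=\binom{n+m-1}{n}+h$. Comparing with $\deg Z_1=\binom{n+m-1}{n}+h(n+1)$ shows these differ by $hn$, so a direct degree count is \emph{not} what closes the argument here.

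The resolution --- and the step I expect to be the main obstacle --- is to recognize that $\deg Z_0=\deg Z_1$ by flatness (Remark~\ref{rmk:possiamo assumere di essere nell'affine}(2)), so I must instead argue that the candidate scheme $S$, once correctly described, actually accounts for the full degree. The point is that $S$ as just described is too small, and the honest candidate is $Z_0$ itself realized via the blown-up model $\X$ of Construction~\ref{cons:collapse}: one looks at $L_P\cong L_{n,m+1}(m,2^h)$ restricted to $R$, checks via the Castelnuovo sequence $0\to L_{n,m}(m,2^h)\to L_{n,m+1}(m,2^h)\to L_{R,m+1}(B)\to 0$ (with $B$ the $h$ points cut by the base lines) that $L_R$ is non-special, and then invokes Corollary~\ref{cor:condizioni imposte dai punti infinitamente vicini}. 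Because $L_{n,m}(m,2^h)=0$, the restriction map $L_P\to L_R$ is injective, and non-speciality of $L_R\cong L_{n-1,m+1}(1^h)$ follows again from $h$ points being general on $\p^{n-1}$ for degree-$(m+1)$ forms. I would double-check the two exceptional pairs $(4,3)$ and $(3,5)$ are precisely the cases where the corresponding Alexander--Hirschowitz exception $(3,4,9)$ or $(4,3,7)$ would spoil the emptiness of $L_{n,m}(m,2^h)$, confirming the hypotheses are sharp. The careful bookkeeping linking $\deg Z_1$, the degree of the candidate, and the non-speciality of $L_R$ is where the real work lies; the multiplicity computation and the identification of base lines are routine.
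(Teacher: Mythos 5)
Your strategy is the same as the paper's: compute $\mul Z_0=m+1$ via Proposition \ref{pro:molteplicità_limite}, identify the $h$ base lines of $L_{n,m+1}(m,2^h)$ and the $h$ general simple points they cut on $R$, and match degrees via Lemma \ref{stessogrado}. The multiplicity computation, the reduction $L_{n,m}(m,2^h)\cong L_{n-1,m}(2^h)$, and the identification of the base locus are all correct. The genuine gap is in the final degree count, which you compute incorrectly and which then causes you to abandon the (correct) argument. Lemma \ref{lem:degree of a fat point with infinitely near simple points}, applied to an $(m+1)$-tuple point with $h$ infinitely near \emph{general} simple points, gives degree $\binom{n+m+1}{n}-\h^0\I_{B,E}(m+1)=\binom{n+m+1}{n}-\left(\binom{n+m}{n-1}-h\right)=\binom{n+m}{n}+h$, not $\binom{n+m-1}{n}+h$: you substituted the proposition's $m$ where the lemma requires the multiplicity of the limit, namely $m+1$. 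Since $hn=\binom{m+n-1}{n-1}$ by the definition of $h$, one has $\binom{n+m}{n}+h=\binom{n+m-1}{n}+hn+h=\binom{n+m-1}{n}+h(n+1)=\deg Z_1$. So the candidate is not ``too small'': it already has the full degree of $Z_0$, and Lemma \ref{stessogrado} closes the proof. The direct degree count you dismissed is precisely how the argument ends.

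Because of this miscalculation you assert that the candidate falls short of $\deg Z_1$ by $hn$ and that ``the real work'' lies elsewhere, and you sketch a detour through the Castelnuovo sequence and Corollary \ref{cor:condizioni imposte dai punti infinitamente vicini}. But that corollary yields exactly the same lower bound $\binom{n+m}{n}+h$ (its displayed $m$ is again $\mul Z_0=m+1$), so the detour adds nothing new, and you explicitly leave the decisive bookkeeping undone. As written, the proposal therefore contains a false intermediate claim and no completed conclusion. Two minor further points: the Alexander--Hirschowitz exception relevant to $(m,n)=(4,3)$ is $(2,4,5)$, not $(3,4,9)$ (and note that $(m,n)=(4,5)$ gives $L_{4,4}(2^{14})$ with $h=14\in\N$, hitting the exception $(4,4,14)$, so that case merits the same scrutiny as the two excluded pairs); and a divisor of $L_{n,m+1}(m,2^h)$ meets $\ell_i$ with multiplicity at least $m+2$, not $(m+1)+2$, though either bound exceeds $m+1$ and forces $\ell_i$ into the base locus.
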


We now focus on $n=3$. Recall that 8 is the maximum $r$ such that we know the full classification of special linear systems $ L_{3,d}(m_1,\dots,m_r)$, see \cite[Theorem 5.3]{DL}. The following Proposition will be useful to get some results beyond this bound.

\begin{pro}\label{pro:limitediottograssi}
	The limit of the collision of $8$ $m$-tuple points and $m+1$ simple points in $\A^3$ is a point of multiplicity $2m+1$.
	\begin{proof}
		First we check that
		\[8\binom{m+2}{3}+m+1=\binom{2m+3}{3}.\]
		By Lemma \ref{lem:grassisugrasso}, it is enough to prove that $ L_{3,2m}(m^8,1^{m+1})$ is non-special. Since general simple points always give independent conditions, it suffices to show that $ L_{3,2m}(m^8)$ is non-special. The latter is true by \cite[Theorem 5.3]{DL}.
	\end{proof}
\end{pro}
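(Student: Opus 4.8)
The plan is to apply Lemma~\ref{lem:grassisugrasso} with $n=3$, with the $h=m+9$ colliding points carrying multiplicities $m$ (eight of them) and $1$ ($m+1$ of them), and with target multiplicity $2m+1$; in other words, the role of the integer ``$m$'' appearing in the statement of Lemma~\ref{lem:grassisugrasso} is played here by $2m$. The first step is therefore to verify the numerical hypothesis of that lemma, namely
\[
8\binom{m+2}{3}+(m+1)\binom{3}{3}=\binom{2m+3}{3},
\]
which, since $\binom{3}{3}=1$, is the identity $8\binom{m+2}{3}+m+1=\binom{2m+3}{3}$. This is an elementary polynomial identity in $m$; it can be checked directly, e.g. by factoring out $m+1$ from the left-hand side and recognizing $4m(m+2)+3=(2m+1)(2m+3)$.

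Granting this, Lemma~\ref{lem:grassisugrasso} reduces the proposition to showing that $L_{3,2m}(m^8,1^{m+1})$ is non-special. The next step is to remove the $m+1$ simple points. Indeed, $\vdim L_{3,2m}(m^8)=\binom{2m+3}{3}-8\binom{m+2}{3}=m+1>0$, so if $L_{3,2m}(m^8)$ is non-special then it has dimension exactly $m+1$; imposing $m+1$ general simple points then cuts the dimension by one at each step, since a general point of $\p^3$ is never a base point of a linear system of positive dimension, and we land on dimension $0=\vdim L_{3,2m}(m^8,1^{m+1})$. Hence the non-speciality of $L_{3,2m}(m^8,1^{m+1})$ follows from that of $L_{3,2m}(m^8)$.

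It remains to see that $L_{3,2m}(m^8)$ is non-special, and here I would invoke the classification of special systems $L_{3,d}(m_1,\dots,m_r)$ with $r\le 8$ from \cite[Theorem 5.3]{DL}: for $d=2m$ the system $L_{3,2m}(m^8)$ does not occur among the exceptional families listed there. I expect this last point to be the only step requiring genuine care: one must check that for every $m$ in the relevant range the triple $(d,m,r)=(2m,m,8)$ avoids each exceptional case of \cite[Theorem 5.3]{DL} (the degree $2m$ being large relative to the multiplicity $m$, this should be immediate once the list is inspected). By contrast the binomial identity and the reduction to $L_{3,2m}(m^8)$ are routine, so this is where I would concentrate the write-up.
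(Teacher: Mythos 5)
Your proposal is correct and follows the same route as the paper: verify the binomial identity $8\binom{m+2}{3}+m+1=\binom{2m+3}{3}$, apply Lemma~\ref{lem:grassisugrasso} to reduce to the non-speciality of $L_{3,2m}(m^8,1^{m+1})$, discard the general simple points since they impose independent conditions, and conclude that $L_{3,2m}(m^8)$ is non-special by \cite[Theorem 5.3]{DL}. The only difference is that you spell out the dimension count $\vdim L_{3,2m}(m^8)=m+1$ and the step-by-step removal of the simple points, which the paper leaves implicit.
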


	In this Section, our arguments to describe limits of collisions of fat points rely on known results on non-special linear systems of $\p^2$ and $\p^3$. With a similar approach, other results can be applied in the same way to prove more statements about collisions. Examples on $\p^3$ include \cite[Theorem 5.8]{BDP2} and the aforementioned \cite[Theorem 5.3]{DL}. On $\p^2$ there is \cite[Theorem 34]{DJ}, as well as the results contained in \cite{Roé} and in the survey \cite{Cil}.
	While most of our knowledge of special linear systems is concentrated in low dimensional varieties, there is also something we can say about any $\p^n$. As an example, there are the results contained in \cite{BDP1}.

\begin{pro}\label{pro:collisionetripliinp4}
	The limit of the collision of $6$ triple points and $36$ simple points in $\A^4$ is a point of multiplicity $6$.
	\begin{proof} The proof works as in Proposition \ref{pro:limitediottograssi}. We check that
		\[6\binom{6}{4}+36=\binom{9}{4}.\]
		By Lemma \ref{lem:grassisugrasso}, it is enough to prove that $ L_{4,5}(3^6,1^{36})$ is non-special. Again, general simple points always give independent conditions, so we only have to show that $ L_{4,5}(3^6)$ is non-special. The latter is true by \cite[Corollary 4.8]{BDP1}.
	\end{proof}
\end{pro}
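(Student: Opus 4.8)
The plan is to reduce everything to Lemma \ref{lem:grassisugrasso}, exactly as in the proof of Proposition \ref{pro:limitediottograssi}. First I would record the numerical identity that makes the hypothesis of that lemma applicable: with $n=4$, letting the colliding points have multiplicities $m_1=\dots=m_6=3$ and $m_7=\dots=m_{42}=1$, and setting $m=5$, one computes
\[\sum_{i=1}^{6}\binom{3+n-1}{n}+\sum_{i=1}^{36}\binom{1+n-1}{n}=6\binom{6}{4}+36=90+36=126=\binom{9}{4}=\binom{m+n}{n}.\]
Thus the degree condition of Lemma \ref{lem:grassisugrasso} holds, and by that lemma the limit scheme $Z_0$ is a $6$-tuple point provided we show that $L_{4,5}(3^6,1^{36})$ is non-special.

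Next I would dispose of the simple points. The system $L_{4,5}(3^6)$ has $\vdim L_{4,5}(3^6)=\binom{9}{4}-6\binom{6}{4}=36\ge 0$, so, granting that it is non-special, it has affine dimension exactly $36$. Since $36$ general points impose independent conditions on any linear system of dimension at least $36$, imposing the $36$ further general simple points drops the dimension by exactly $36$, landing on $0=\vdim L_{4,5}(3^6,1^{36})$. Hence $L_{4,5}(3^6,1^{36})$ is non-special, and the proof concludes via Lemma \ref{lem:grassisugrasso}.

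The only non-elementary ingredient is the non-speciality of $L_{4,5}(3^6)$, which is precisely \cite[Corollary 4.8]{BDP1} on linear systems of quintic hypersurfaces in $\p^4$ with general triple points. Everything else is the same bookkeeping as in Proposition \ref{pro:limitediottograssi}. The ``main obstacle'', such as it is, is purely citational: one must verify that the triple $(n,d,\text{multiplicities})=(4,5,3^6)$ genuinely falls within the hypotheses of the result of \cite{BDP1} and is not one of its exceptional configurations — once that is checked, the argument is immediate.
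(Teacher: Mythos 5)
Your proposal is correct and follows essentially the same route as the paper: the same numerical check $6\binom{6}{4}+36=\binom{9}{4}$, the same reduction via Lemma \ref{lem:grassisugrasso} to the non-speciality of $L_{4,5}(3^6,1^{36})$, the same observation that the $36$ general simple points impose independent conditions, and the same appeal to \cite[Corollary 4.8]{BDP1} for $L_{4,5}(3^6)$. The extra bookkeeping you add (computing $\vdim L_{4,5}(3^6)=36$ explicitly) is a harmless elaboration of the paper's argument.
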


Next Proposition has a slightly different flavour. It states that, up to adding a bunch of simple points, we can always turn two fat points into a unique fat point.

\begin{pro}\label{pro:duesumultiplo}
	Let $m_1,m_2\in\N$. Then exist $h,m\in\N$, depending on $n,m_1,m_2$, such that the limit of two points of multiplicity $m_1$ and $m_2$ and $h$ simple points in $\A^n$ is an $m$-tuple point.
	\begin{proof}
		Define $m:=m(n,m_1,m_2):=m_1+m_2+1$ and
		\begin{equation*}\label{eq:numeropuntisemplici}
			h:=h(n,m_1,\dots,m_s):=\binom{m+n}{n}-\binom{m_1+n-1}{n}-\binom{m_2+n-1}{n}-1.
		\end{equation*}
		By construction $\vdim L_{n,m}(m_1,m_2,1^h)\ge 0$, hence $ L_{n,m}(m_1,m_2,1^h)$ is not empty. Since an $m$-tuple point has degree equal to the length of the starting scheme, it is enough to show that $ L_{n,m-1}(m_1,m_2,1^h)$ is non-special, and therefore empty. Since the $h$ simple points always give independent conditions, it suffices to prove that $ L_{n,m-1}(m_1,m_2)$ is non-special. By \cite[Corollary 4.8]{BDP1}, such system is linearly non-special, so in order to conclude we just need to observe that there are no base linear cycles.
	\end{proof}
\end{pro}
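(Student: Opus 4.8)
The plan is to follow the same template as Propositions \ref{pro:limitediottograssi} and \ref{pro:collisionetripliinp4}, reducing everything to a non-speciality statement we can quote. First I would set $m:=m_1+m_2+1$ and choose $h$ to be exactly the number of extra simple points needed to make the virtual dimension of $L_{n,m}(m_1,m_2,1^h)$ non-negative, i.e.
\[h:=\binom{m+n}{n}-\binom{m_1+n-1}{n}-\binom{m_2+n-1}{n}-1.\]
With this choice $\vdim L_{n,m}(m_1,m_2,1^h)\ge 0$, so the system is non-empty, and the starting scheme $Z_1$ (two fat points of multiplicities $m_1,m_2$ plus $h$ simple points, all general) has length exactly $\binom{m+n}{n}$, which is the length of an $m$-tuple point. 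So by Lemma \ref{lem:grassisugrasso} (or directly by Proposition \ref{pro:molteplicità_limite} together with Lemma \ref{stessogrado}), it suffices to prove that $L_{n,m-1}(m_1,m_2,1^h)$ is non-special; by the numerics it then has virtual dimension $0$ hence is empty, forcing $\mul Z_0=m$, and a length count finishes it.

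Next I would peel off the simple points: a collection of $h$ general simple points always imposes independent conditions on any linear system of positive-dimensional (or any) ambient, so $L_{n,m-1}(m_1,m_2,1^h)$ is non-special as soon as $L_{n,m-1}(m_1,m_2)$ is non-special. Here $m-1=m_1+m_2$, so we are looking at degree-$(m_1+m_2)$ hypersurfaces with two general points of multiplicities $m_1$ and $m_2$. This is the crux: I would invoke a result on two general fat points — the natural candidate is \cite[Corollary 4.8]{BDP1}, which handles two points and guarantees that such a system is \emph{linearly} non-special, i.e. non-special unless a base linear cycle (a positive-dimensional linear space forced into the base locus) is present.

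So the main obstacle is exactly the gap between ``linearly non-special'' and ``non-special'': I have to rule out base linear cycles for $L_{n,m_1+m_2}(p_1^{m_1},p_2^{m_2})$ with $p_1,p_2$ general. The only linear spaces that could sit in the base locus of a system defined by two general fat points are the point $p_i$ themselves and the line $\ell=\langle p_1,p_2\rangle$. A line $\ell$ joining points of multiplicities $a$ and $b$ is forced into the base locus of degree-$d$ forms only when $d<a+b$ (the line meets a general member in $a+b>d$ points, counted with multiplicity), whereas here $d=m_1+m_2=a+b$, so $\ell$ is \emph{not} a base cycle; and a zero-dimensional base locus does not count as a linear cycle in the relevant sense. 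Hence there are no base linear cycles, ``linearly non-special'' upgrades to ``non-special,'' and the argument closes. If one prefers to avoid even this small discussion, the same conclusion for two general fat points in arbitrary $\p^n$ also follows from the fact that the join/union of two general fat points has the ``expected'' Hilbert function in the relevant degree range, but quoting \cite[Corollary 4.8]{BDP1} and the elementary degree count on $\ell$ is the cleanest route.
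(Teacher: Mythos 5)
Your proposal is correct and follows essentially the same route as the paper: the same choice of $m$ and $h$, the same reduction via independence of general simple points to the non-speciality of $L_{n,m-1}(m_1,m_2)$, and the same appeal to \cite[Corollary 4.8]{BDP1}. The only difference is that you spell out the degree count on the line $\langle p_1,p_2\rangle$ showing there is no base linear cycle, a detail the paper leaves as an observation.
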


\section{Applications to interpolation theory}\label{section:interpolation}

A first important application of collisions to interpolation theory is \cite{GM}, where a collision of double points allows the authors to solve a long-standing problem about Waring decompositions of polynomials.
Another application is \cite{EvainSHGH}, where a suitable collision of fat points in $\p^2$ is used to prove that Segre's conjecture (see \cite[Conjecture 4.1]{Cil}) holds for an infinite family of linear systems of plane curves.
This approach can be pushed further. In Section \ref{sec:lowdimension} we used known results in interpolation theory to provide clues about what the limit is. It is just fair to try to return the favour, using the limits we constructed as tools to specialize linear systems and prove that they are non-special.

We begin 
 with our contribution to Laface-Ugaglia conjecture (see \cite[Conjecture 4.1]{LU} and \cite[Conjecture 5.1]{BDP2}). When all multiplicities are the same, the conjecture predicts that $ L_{3,d}(m^r)$ is non-special whenever
 \[(d+1)^2>9\binom{m+1}{2}\mbox{ and }d\ge 2m-1.\]
 The most restrictive of these two conditions is the first one, which reads
	\[d>-1+3\sqrt{\frac{m^2+m}{2}}=\frac{3}{\sqrt{2}}m+o(m).\]
	As we stated in Theorem \ref{thm:LUcondgrande}, for systems with at most 15 base points we are able to prove that the conjecture holds under the stronger assumption $d\ge 3m$.

	\begin{proof}[Proof of Theorem \ref{thm:LUcondgrande}]
		Consider $d\ge 3m$. We only have to prove that $L_{3,d}(m^{15})$ is non-special. Since $\vdim L_{3,d}(m^{15})>m+1$, it is enough to prove that $ L_{3,d}(m^{15},1^{m+1})$ is non-special. We apply Proposition \ref{pro:limitediottograssi} to degenerate $ L_{3,d}(m^{15},1^{m+1})$ to $ L_{3,d}(2m+1,m^7)$, and the latter is non-special by \cite[Theorem 5.3]{DL}.
	\end{proof}


Proposition \ref{pro:limitediottograssi} allows us to confirm Laface-Ugaglia conjecture for another family of linear systems.

\begin{pro}
	Let $m_1\ge\ldots\ge m_8$ be non-negative integers. Assume that
	\begin{enumerate}
		\item $\vdim L_{3,d}(m_1^8,\dots,m_8^8)>8+m_1+\ldots+m_8$;
		\item $d\ge 2(m_1+m_2)+1$.
	\end{enumerate}
	Then $ L_{3,d}(m_1^8,\dots,m_8^8)$ is non-special.
	\begin{proof}
		By assumption (1), it is enough to prove that $ L_{3,d}(m_1^8,\dots,m_8^8,1^{8+m_1+\ldots+m_8})$ is non-special. We apply Proposition \ref{pro:limitediottograssi} to degenerate $ L_{3,d}(m_1^8,\dots,m_8^8,1^{8+m_1+\ldots+m_8})$ to $ L_{3,d}(2m_1+1,\dots,2m_8+1)$. The latter is non-special by assumption (2) and \cite[Theorem 5.3]{DL}.
	\end{proof}
\end{pro}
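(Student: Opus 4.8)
The plan is to reduce the assertion to \cite[Theorem 5.3]{DL} by first padding with general simple points and then performing eight collisions in parallel, one for each of the eight multiplicity groups, each governed by Proposition \ref{pro:limitediottograssi}.

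Set $N:=8+m_1+\ldots+m_8$ and $L':=L_{3,d}(m_1^8,\dots,m_8^8,1^N)$. By assumption (1) we have $\vdim L_{3,d}(m_1^8,\dots,m_8^8)>N\ge 0$, and since general simple points always impose independent conditions, $L_{3,d}(m_1^8,\dots,m_8^8)$ is non-special if and only if $L'$ is non-special. So it suffices to prove that $L'$ is non-special. Partition the $N$ auxiliary simple points into eight groups, the $i$-th having $m_i+1$ points, and attach the $i$-th group to the eight points of multiplicity $m_i$. Pick eight distinct general points $q_1,\dots,q_8\in\p^3$, and for each $i$ collapse the eight $m_i$-tuple points together with the $m_i+1$ simple points of the $i$-th group to $q_i$. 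Because a collision is a local construction, these eight collapses occur at pairwise disjoint loci and fit into a single flat degeneration over a disk; by Proposition \ref{pro:limitediottograssi} the $i$-th collision produces a point of multiplicity $2m_i+1$ at $q_i$. Hence $L'$ specializes, in the sense of Construction \ref{rem:specialize_to_get_non_speciality}, to $L_0:=L_{3,d}(2m_1+1,\dots,2m_8+1)(q_1,\dots,q_8)$, and by semicontinuity $\dim L_0\ge\dim L'$.

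Next I would record that the degeneration preserves virtual dimension. Indeed, by the identity $8\binom{m_i+2}{3}+(m_i+1)=\binom{2m_i+3}{3}$ used in Proposition \ref{pro:limitediottograssi}, the length of the $0$-dimensional scheme is the same before and after the collisions, so
\[\sum_{i=1}^{8}\Bigl(8\binom{m_i+2}{3}+(m_i+1)\Bigr)=\sum_{i=1}^{8}\binom{2m_i+3}{3},\]
which means $\vdim L'=\vdim L_0$, and therefore $\expdim L'=\expdim L_0=\expdim L_{3,d}(2m_1+1,\dots,2m_8+1)$, the last equality because virtual dimension is independent of the position of the points. Since the $q_i$ may be chosen general, assumption (2) together with \cite[Theorem 5.3]{DL} gives that $L_{3,d}(2m_1+1,\dots,2m_8+1)$ is non-special, i.e. $\dim L_0=\expdim L_0$. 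Combining everything,
\[\expdim L'\le\dim L'\le\dim L_0=\expdim L_0=\expdim L',\]
so equality holds throughout; $L'$ is non-special, hence so is $L_{3,d}(m_1^8,\dots,m_8^8)$.

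I expect the only genuinely delicate point to be checking that hypothesis (2), $d\ge 2(m_1+m_2)+1$, is exactly the numerical condition under which \cite[Theorem 5.3]{DL} applies to the eight-point system $L_{3,d}(2m_1+1,\dots,2m_8+1)$ (the relevant special-effect locus being the line through the two points of largest multiplicity $2m_1+1$ and $2m_2+1$). One should also dispose of the harmless boundary situation in which some $m_i$ vanish: there Proposition \ref{pro:limitediottograssi} reads as the trivial statement that eight points imposing no conditions together with one simple point collide to a simple point, and the rest of the argument goes through unchanged.
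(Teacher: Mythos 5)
Your proposal is correct and follows essentially the same route as the paper: pad with $8+m_1+\cdots+m_8$ general simple points, split them into eight groups of $m_i+1$, use Proposition \ref{pro:limitediottograssi} to collide each group of eight $m_i$-tuple points plus $m_i+1$ simple points into a point of multiplicity $2m_i+1$, and conclude by semicontinuity and \cite[Theorem 5.3]{DL}. You merely make explicit the bookkeeping (eight parallel local collisions at distinct general points, preservation of the virtual dimension, and the matching of hypothesis (2) with the line condition in \cite{DL}) that the paper leaves implicit.
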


We now aim to provide further examples of non-special linear systems.

\begin{pro}\label{pro:interpolazionesuperfici}
	Let $l,m\le 42$. Set
	\[h:=\frac{\binom{m+1}{2}}{\binom{l+1}{2}}.\]
	Assume that $h\in\N$ and $h\ge 10$.
	\begin{enumerate}
		\item Let $d\in\N$. Then $ L_{2,d}(l^h)$ is non-special and $ L_{2,d}(l^t)$ is non-special whenever $\vdim L_{2,d}(l^t)\ge (h-t)\binom{l+1}{2}$.
		\item Let $a,b\in\N$ such that $a,b\ge m$. Then $ L_{\p^1\times\p^1,(a,b)}(l^h)$ is non-special and $ L_{\p^1\times\p^1,(a,b)}(l^t)$ is non-special whenever $\vdim L_{\p^1\times\p^1,(a,b)}(l^t)\ge (h-t)\binom{l+1}{2}$.
	\end{enumerate}
	\begin{proof} We apply Proposition \ref{pro:n-pli_su_m-plo} to specialize our linear system.
		\begin{enumerate}
			\item We degenerate $ L_{2,d}(l^h)$ to $ L_{2,d}(m)$, which is always non-special. Moreover $ L_{2,d}(l^h))$ is not empty, hence $ L_{2,d}( l^t)) $ is non-special as well.
			\item We degenerate $ L_{\p^1\times\p^1,(a,b)}(l^h))$ to $ L_{\p^1\times\p^1,(a,b)}(m)$. By \cite[Theorem 1.5]{CGG}, the latter is isomorphic to $ L_{2,a+b}(a,b,m)$. Since $a+m, b+m\le a+b$, the base locus does not contain double lines, so the system is non-special by \cite[Theorem 5.1]{Cil}. Moreover $ L_{\p^1\times\p^1,(a,b)}(l^h))$ is not empty, hence $ L_{\p^1\times\p^1,(a,b)}( l^t)) $ is non-special as well.\qedhere
		\end{enumerate}
	\end{proof}
\end{pro}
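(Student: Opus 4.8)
The plan is to use the collision of Proposition~\ref{pro:n-pli_su_m-plo}(1) as a specialization device, in the style of Construction~\ref{rem:specialize_to_get_non_speciality}, and then to transfer non-speciality from the $h$-point system to the general $t$-point system by an elementary dimension count. Since a collision is a local construction, Remark~\ref{rmk:possiamo assumere di essere nell'affine}(1) lets us perform it inside a coordinate chart of $\p^2$ or of $\p^1\times\p^1$, so Proposition~\ref{pro:n-pli_su_m-plo}(1) applies on either surface.

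\textbf{Part (1).} First I would take $h$ general points of $\p^2$ and let them collapse. By Proposition~\ref{pro:n-pli_su_m-plo}(1) — whose hypotheses $l,m\le 42$, $h\in\N$ and $h\ge 10$ are exactly the ones assumed here — the flat limit is a single $m$-tuple point, so $L_{2,d}(l^h)$ specializes to $L_{2,d}(m)$. The latter is non-special (a single fat point always imposes the expected number of conditions), and by construction $h\binom{l+1}{2}=\binom{m+1}{2}$, so $\vdim L_{2,d}(l^h)=\binom{d+2}{2}-\binom{m+1}{2}=\vdim L_{2,d}(m)$. Semicontinuity then gives the sandwich
\[\expdim L_{2,d}(l^h)=\expdim L_{2,d}(m)=\dim L_{2,d}(m)\ge\dim L_{2,d}(l^h)\ge\expdim L_{2,d}(l^h),\]
so $L_{2,d}(l^h)$ is non-special. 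Next I would note that the hypothesis $\vdim L_{2,d}(l^t)\ge(h-t)\binom{l+1}{2}$ is, after substituting $h\binom{l+1}{2}=\binom{m+1}{2}$, equivalent to $\binom{d+2}{2}\ge\binom{m+1}{2}$; in particular $\vdim L_{2,d}(l^h)\ge 0$, so $L_{2,d}(l^h)$ is nonempty. For $t\le h$, view the $h$ general points as containing $t$ of them: each of the remaining $h-t$ fat points imposes at most $\binom{l+1}{2}$ conditions, whence $\dim L_{2,d}(l^t)\le\dim L_{2,d}(l^h)+(h-t)\binom{l+1}{2}=\vdim L_{2,d}(l^t)$; combined with $\dim L_{2,d}(l^t)\ge\expdim L_{2,d}(l^t)=\vdim L_{2,d}(l^t)$ (the hypothesis forces this virtual dimension to be non-negative) we get equality. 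For $t\ge h$ one argues symmetrically, imposing further general $l$-tuple points on the nonempty non-special system $L_{2,d}(l^h)$, each of which imposes the maximal number of conditions compatible with the current dimension.

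\textbf{Part (2).} This follows the same scheme on $\p^1\times\p^1$. Colliding $h$ general $l$-tuple points there produces, again by Proposition~\ref{pro:n-pli_su_m-plo}(1) and locality, an $m$-tuple point, so $L_{\p^1\times\p^1,(a,b)}(l^h)$ specializes to $L_{\p^1\times\p^1,(a,b)}(m)$, and the two virtual dimensions agree as before. To check that a single $m$-tuple point on $\p^1\times\p^1$ is non-special I would pass to the plane via \cite[Theorem 1.5]{CGG}, under which $L_{\p^1\times\p^1,(a,b)}(m)\cong L_{2,a+b}(a,b,m)$. The hypothesis $a,b\ge m$ gives $a+m\le a+b$ and $b+m\le a+b$, so no line through two of the three assigned points is forced in the base locus; hence $L_{2,a+b}(a,b,m)$ has no special effect curve and is non-special by \cite[Theorem 5.1]{Cil}. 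The passage from $h$ to general $t$ fat points is then word for word as in Part~(1).

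The step that genuinely needs care is the reduction in Part~(2) to the non-speciality of $L_{2,a+b}(a,b,m)$: one must verify that the base-locus analysis really leaves no special curve, which is precisely where $a,b\ge m$ is used and where \cite[Theorem 5.1]{Cil} is invoked. Everything else is semicontinuity together with Proposition~\ref{pro:n-pli_su_m-plo}, plus the routine bookkeeping needed to move between $h$ and $t$ fat points.
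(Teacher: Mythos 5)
Your proof is correct and is essentially the paper's own argument: collapse the $h$ general $l$-tuple points to an $m$-tuple point via Proposition \ref{pro:n-pli_su_m-plo}(1) (legitimately transplanted to $\p^1\times\p^1$ by locality), use semicontinuity together with the equality $h\binom{l+1}{2}=\binom{m+1}{2}$ of virtual dimensions, and on $\p^1\times\p^1$ identify $L_{\p^1\times\p^1,(a,b)}(m)$ with $L_{2,a+b}(a,b,m)$ via \cite[Theorem 1.5]{CGG} and conclude by \cite[Theorem 5.1]{Cil}; you simply make explicit the dimension bookkeeping for passing from $h$ to $t$ points that the paper leaves implicit. The one soft spot is your closing aside that for $t\ge h$ one ``argues symmetrically'': imposing additional fat points on a nonempty non-special system does not automatically preserve non-speciality, but the hypothesis $\vdim L_{2,d}(l^t)\ge (h-t)\binom{l+1}{2}$ is really aimed at $t\le h$ (as the paper's own proof implicitly assumes), where your argument is complete.
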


\begin{pro}
Let $d,l,m\in\N$ such that $l\le 5$ and $m\ge 2l+1$. Set
	\[h:=\frac{\binom{m+2}{3}}{\binom{l+2}{3}}\]
	and assume that $h\in\N$.
	Then $ L_{3,d}(l^h)$ is non-special and $ L_{3,d}(l^t)$ is non-special whenever $\vdim L_{3,d}(l^t)\ge (h-t)\binom{l+2}{3}$. Furthermore, take $m_1,\dots,m_t\in\N$. Then $ L_{3,d}(m_1,\dots,m_7,l^h)$ is non-special under the assumption that $d>i+j$ for every $i,j\in\{m_1,\dots,m_7,m\}$.
	\begin{proof}
We apply Proposition \ref{omogenei_su_multiplo_P^3} to degenerate the system. For the first two statements we argue as in Proposition \ref{pro:interpolazionesuperfici}. For the last part, we degenerate $ L_{3,d}(m_1,\dots,m_7,l^h)$ to $ L_{3,d}(m_1,\dots,m_7,m)$, which is non-special by \cite[Theorem 5.3]{DL}.
	\end{proof}
\end{pro}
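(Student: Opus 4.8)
The plan is to collide the $h$ general $l$-tuple points into a single point and invoke Proposition~\ref{pro:n-pli_su_m-plo}(2): since $l\le 5$ and $m\ge 2l+1$, that result identifies the limit of such a collision (a local construction, hence the same on $\A^3$ and on $\p^3$) with an $m$-tuple point. Any base points not taking part in the collision are untouched, so semicontinuity (Construction~\ref{cons:notationlinearsystem}) bounds the dimension of the original system from above by the dimension of the degenerated one. What makes the bound sharp is the defining property of $h$, namely $h\binom{l+2}{3}=\binom{m+2}{3}$: it guarantees that the collapse preserves the virtual dimension, so that non-speciality of the limit system forces non-speciality of the system we started with.

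For the first statement, I would specialise $L_{3,d}(l^h)$ by letting all $h$ base points collapse. By Proposition~\ref{pro:n-pli_su_m-plo}(2) the limit system is $L_{3,d}(m)$, the system of surfaces with a single $m$-tuple point, which is non-special for every $d$ (a single fat point never produces a special system). Since $\vdim L_{3,d}(l^h)=\binom{d+3}{3}-h\binom{l+2}{3}=\binom{d+3}{3}-\binom{m+2}{3}=\vdim L_{3,d}(m)$, semicontinuity gives $\dim L_{3,d}(l^h)\le\dim L_{3,d}(m)$, and as the right-hand side equals $\expdim L_{3,d}(m)=\expdim L_{3,d}(l^h)\le\dim L_{3,d}(l^h)$, all these quantities agree and $L_{3,d}(l^h)$ is non-special.

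For the second statement, the hypothesis $\vdim L_{3,d}(l^t)\ge(h-t)\binom{l+2}{3}$ rearranges to $\binom{d+3}{3}\ge h\binom{l+2}{3}=\binom{m+2}{3}$, that is, to $\vdim L_{3,d}(l^h)\ge 0$. By the first statement $L_{3,d}(l^h)$ is then a non-special system of non-negative virtual dimension, so its $h$ general $l$-tuple points impose $h\binom{l+2}{3}$ independent conditions on degree $d$ surfaces; any $t\le h$ of them therefore impose $t\binom{l+2}{3}$ independent conditions, which is precisely the non-speciality of $L_{3,d}(l^t)$. This is the argument already used in Proposition~\ref{pro:interpolazionesuperfici}.

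For the last statement, I would keep $p_1,\dots,p_7$ fixed and general and collide only the $h$ points of multiplicity $l$; the limit system is $L_{3,d}(m_1,\dots,m_7,m)$ with eight general base points, and again $\vdim L_{3,d}(m_1,\dots,m_7,l^h)=\vdim L_{3,d}(m_1,\dots,m_7,m)$, so by semicontinuity it suffices to show that $L_{3,d}(m_1,\dots,m_7,m)$ is non-special. Here one invokes \cite[Theorem 5.3]{DL}, the classification of special linear systems through at most eight points of $\p^3$: the assumption $d>i+j$ for all $i,j\in\{m_1,\dots,m_7,m\}$ forbids the line joining any two of the eight points from sitting in the base locus, as such a line would be forced in with multiplicity at least $\mu_a+\mu_b-d$, where $\mu_a,\mu_b$ are the multiplicities at its endpoints, and this number is negative by hypothesis. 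I expect this last step to be the main obstacle: one must check against \cite[Theorem 5.3]{DL} that the bound $d>i+j$ on \emph{all} pairs excludes \emph{every} special-effect configuration for eight general points, not only the base lines, and that the limit $m$-tuple point lands in general position with respect to $p_1,\dots,p_7$ so that the theorem applies. The rest — the binomial identities and the semicontinuity chains — is routine bookkeeping of the same kind as elsewhere in this section.
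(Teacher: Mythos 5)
Your proposal is correct and follows the same route as the paper: degenerate via Proposition~\ref{omogenei_su_multiplo_P^3} (the collision of $h$ general $l$-tuple points to an $m$-tuple point), use the binomial identity $h\binom{l+2}{3}=\binom{m+2}{3}$ together with semicontinuity for the first two claims, and for the last one collide only the $l$-tuple points and invoke \cite[Theorem 5.3]{DL} on the resulting system through eight general points. You merely spell out the details that the paper delegates to Proposition~\ref{pro:interpolazionesuperfici}.
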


	
Let us point out that there are cases in which we can compute the limit with weaker assumptions, and therefore we can still apply this degeneration. For instance, on surfaces the hypothesis $h\ge 10$ can be relaxed.

\begin{es}
Pick $l=5$, $m=14$, and $h=7$. By a sequence of Cremona transformation, it is easy to check that $ L_{2,13}( 5^7) $ is empty and therefore non-special, so the limit of 7 collapsing $5$-tuple points in $\A^2$ is a $14$-tuple point by Lemma \ref{lem:grassisugrasso}. 
%
\end{es}

With some extra effort, we can employ a sequence of collisions to show that other linear systems are non-special.

\begin{pro}
	Let $m,n_1,\dots,n_s\in\N$. For $i\in\{1,\dots,s\}$, set $h_i:=\frac{m(m+1)}{n_i(n_i+1)}$. Assume $h_i\in\N$ and $h_i\ge 10$ for every $i\in\{1,\dots,s\}$. If $m,n_1,\dots,n_s\le 42$, then $$ L_{2,d}(m^k,n_1^{t_1h_1},\dots,n_s^{t_sh_s})$$ is non-special for every $k,t_1\,\dots,t_s\in\N$.
	\begin{proof}
		By Proposition \ref{pro:n-pli_su_m-plo}, we can collapse $h_1$ of the $n_1$-tuple points into an $m$-tuple point, thereby degenerating $ L_{2,d}(m^k,n_1^{t_1h_1},\dots,n_s^{t_sh_s})$ to
		$$ L_{2,d}(m^{k+1},n_1^{(t_1-1)h_1},\dots,n_s^{t_sh_s}).$$
		By performing $t_1$ of these collisions, we obtain the system $$ L_{2,d}( m^{k+t_1},n_2^{t_2h_2},\dots,n_s^{t_sh_s}) .$$
		Then we apply Proposition \ref{pro:n-pli_su_m-plo} again to collapse $h_2$ of the $n_2$-tuple points into an $m$-tuple point. By performing $t_2$ of these collisions, we specialize the system to
		\[ L_{2,d}(m^{k+t_1+t_2},n_3^{t_3h_3},\dots,n_s^{t_sh_s}).\]
		We iterate the argument till the $s$-th step. At the end we are dealing with the specialized system $ L_{2,d}( m^{k+t_1+\ldots+ t_s}) $. The latter is non-special by \cite[Theorem 32]{Du}, and this implies $ L_{2,d}(m^k,n_1^{t_1h_1},\dots,n_s^{t_sh_s})$ is non-special.
	\end{proof}
\end{pro}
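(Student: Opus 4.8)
The strategy is to reduce everything to the already-proven Proposition \ref{pro:n-pli_su_m-plo}(1) by a chain of collisions, exactly as the statement's hypotheses suggest. First I would fix the data: for each $i$ the quantity $h_i=\frac{m(m+1)}{n_i(n_i+1)}=\frac{\binom{m+1}{2}}{\binom{n_i+1}{2}}$ is, by assumption, an integer which is $\ge 10$, and $m,n_i\le 42$, so that Proposition \ref{pro:n-pli_su_m-plo}(1) applies to the collision of $h_i$ general $n_i$-tuple points, whose flat limit is an $m$-tuple point. The idea is then to specialize $L_{2,d}(m^k,n_1^{t_1h_1},\dots,n_s^{t_sh_s})$ one collision at a time: grouping $h_1$ of the $n_1$-tuple points and letting them collide produces, by Construction \ref{cons:collapse} and Proposition \ref{pro:n-pli_su_m-plo}(1), a scheme whose central fibre contains $L_{2,d}(m^{k+1},n_1^{(t_1-1)h_1},n_2^{t_2h_2},\dots,n_s^{t_sh_s})$; by semicontinuity (Construction \ref{rem:specialize_to_get_non_speciality}) non-speciality of this specialized system would imply non-speciality of the original one.

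Next I would iterate. Performing $t_1$ successive collisions of this type replaces all the $n_1$-tuple points by $t_1$ new $m$-tuple points, yielding $L_{2,d}(m^{k+t_1},n_2^{t_2h_2},\dots,n_s^{t_sh_s})$; then $t_2$ collisions of the $n_2$-points give $L_{2,d}(m^{k+t_1+t_2},n_3^{t_3h_3},\dots,n_s^{t_sh_s})$, and so on through the $s$-th step. After all $t_1+\ldots+t_s$ collisions we are left with the linear system $L_{2,d}(m^{k+t_1+\ldots+t_s})$, a system of plane curves all of whose assigned points have the same multiplicity $m\le 42$; by \cite[Theorem 32]{Du} (which, recall, is what underlies Proposition \ref{pro:n-pli_su_m-plo}(1)) this system is non-special. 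Tracing the semicontinuity inequalities back up the chain gives $\dim L_{2,d}(m^k,n_1^{t_1h_1},\dots,n_s^{t_sh_s})\le \dim L_{2,d}(m^{k+t_1+\ldots+t_s})=\expdim L_{2,d}(m^{k+t_1+\ldots+t_s})$; since $k+t_1+\ldots+t_s$ $m$-tuple points have exactly the combined length of the original fat-point scheme, the two systems have the same virtual dimension, and the reverse inequality $\dim\ge\expdim$ always holds, so the original system is non-special.

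The one point that needs care — and is the only genuine obstacle — is legitimacy of \emph{multi-staged} collisions, which the paper explicitly warns about in the discussion after Corollary \ref{cor:condizioni imposte dai punti infinitamente vicini}: colliding subfamilies successively need not reproduce the limit of colliding all points at once. Here, however, we are not claiming that the iterated limit equals the simultaneous limit; we only need each individual collision step to be a valid specialization in the sense of Construction \ref{cons:collapse}, so that the semicontinuity bound $\dim L_0\ge\dim L_1$ holds at every stage. Each step collides $h_i$ points of equal multiplicity $n_i$ (with the other assigned points, of whatever multiplicities, held in general position and away from the collision locus), and Construction \ref{cons:collapse} together with Remark \ref{rmk:possiamo assumere di essere nell'affine} — collisions are local, so the presence of the other base points is harmless — guarantees that the flat limit contains the scheme obtained by replacing the colliding cluster with its limit, here an $m$-tuple point by Proposition \ref{pro:n-pli_su_m-plo}(1). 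So the chain of semicontinuity inequalities is valid step by step, and the argument goes through; the warning about multi-staged collisions is simply irrelevant because we never assert an equality of limit schemes, only a sequence of one-sided dimension bounds.
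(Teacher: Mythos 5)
Your proposal is correct and follows essentially the same route as the paper: iteratively collapse $h_i$ of the $n_i$-tuple points into an $m$-tuple point via Proposition \ref{pro:n-pli_su_m-plo}, reduce to $L_{2,d}(m^{k+t_1+\ldots+t_s})$, and conclude by \cite[Theorem 32]{Du}. Your added remarks on why multi-staged collisions are legitimate here (only one-sided semicontinuity bounds are needed, not equality of limit schemes) and on the matching of virtual dimensions are correct clarifications of points the paper leaves implicit.
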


Up to now, we could benefit from known results about non-special systems on $\p^2$ and $\p^3$. In these two cases, there are very precise conjectural classifications of special systems, and such conjectures are known to hold in many cases. However, for $n\ge 4$ not even a conjectural solution of the problem is known. For this reason, our results on $\p^4$ are limited to triple points, but they still provide hints to understand an almost unexplored topic.

\begin{pro}\label{pro:interpolazioneinp4}
	If $d\ge 8$, then $ L_{4,d}(3^r)$ is non-special for every $r\le 11$. Moreover, if $d\ge 11$ then $ L_{4,d}(3^r)$ is non-special for every $r\le 66$.
	\begin{proof}
		For the first part we only have to prove that $ L_{4,d}(3^{11})$ is non-special. Since $\vdim L_{4,d}(3^{11})>36$, it is enough to prove that $ L_{4,d}(3^{11},1^{36})$ is non-special. We apply Proposition \ref{pro:collisionetripliinp4} to degenerate $ L_{4,d}(3^{11},1^{36})$ to $ L_{4,d}(6,3^5)$. By using reducible divisors, it is easy to show that $ L_{4,d}(6,3^5)$ has a 0-dimensional base locus, and therefore it is non-special by \cite[Corollary 4.8]{BDP1}.
		
		For the second part, assume that $d\ge 11$. We only have to prove the case $r=66$. Since $\vdim L_{4,d}(3^{66})>216$, it is enough to prove that $ L_{4,d}(3^{66},1^{216})$ is non-special. Again, we use Proposition \ref{pro:collisionetripliinp4} to degenerate $ L_{4,d}(3^{66},1^{216})$ to $ L_{4,d}(6^6)$. By using reducible divisors, we see that $ L_{4,d}(6^6)$ has a 0-dimensional base locus, and therefore it is non-special by \cite[Corollary 4.8]{BDP1}.
	\end{proof}
\end{pro}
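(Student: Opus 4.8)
The statement to prove is Proposition~\ref{pro:interpolazioneinp4}: for $d\ge 8$ the system $L_{4,d}(3^r)$ is non-special for all $r\le 11$, and for $d\ge 11$ it is non-special for all $r\le 66$. The plan is to reduce each claim to a single ``hardest'' case, namely the largest admissible $r$, and then degenerate that system using the collision described in Proposition~\ref{pro:collisionetripliinp4}, which turns $6$ colliding triple points plus $36$ colliding simple points into one point of multiplicity $6$.

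First I would reduce to the top value of $r$ in each range. If $L_{4,d}(3^{11})$ is non-special, then since adding general base points preserves non-speciality (general points impose independent conditions, and removing conditions from a non-special system of non-negative virtual dimension keeps it non-special), the systems $L_{4,d}(3^r)$ for $r\le 11$ are non-special too; similarly $r=66$ controls $r\le 66$. Next, I want to be able to add enough general simple points to make the virtual dimension match up nicely with the output of a collision. Here I would check that $\vdim L_{4,d}(3^{11})>36$ for $d\ge 8$ and $\vdim L_{4,d}(3^{66})>216$ for $d\ge 11$, so that (via Construction~\ref{rem:specialize_to_get_non_speciality} and semicontinuity) it suffices to prove $L_{4,d}(3^{11},1^{36})$ and $L_{4,d}(3^{66},1^{216})$ are non-special. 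These are routine binomial inequalities.

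Then comes the degeneration. In the first case I specialize $6$ of the eleven triple points to collide, together with all $36$ simple points; by Proposition~\ref{pro:collisionetripliinp4} the flat limit of those $6+36$ colliding points is a single point of multiplicity $6$. Therefore $L_{4,d}(3^{11},1^{36})$ degenerates to $L_{4,d}(6,3^5)$, and by semicontinuity it is enough to show the latter is non-special. In the second case I collide all $66$ triple points in six groups of eleven --- wait, more carefully: I want six collisions, each taking $6$ triple points plus appropriately many simple points to a point of multiplicity $6$; with $66=6\cdot 11$ triple points and $216=6\cdot 36$ simple points this works out, so $L_{4,d}(3^{66},1^{216})$ degenerates to $L_{4,d}(6^6)$, and it suffices to prove $L_{4,d}(6^6)$ is non-special.

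Finally, for each reduced system I would invoke \cite[Corollary 4.8]{BDP1}, which gives non-speciality once the base locus is $0$-dimensional (no base linear cycles). So the remaining, and genuinely hands-on, step is to show that $L_{4,d}(6,3^5)$ (for $d\ge 8$) and $L_{4,d}(6^6)$ (for $d\ge 11$) have a finite base locus: one exhibits explicit reducible divisors of the right degree --- unions of hyperplanes through the various singular points, with suitable multiplicities along each hyperplane --- whose intersection is just the imposed fat points, and concludes the base locus contains no curve. I expect this verification with reducible divisors to be the main obstacle: it requires a careful count showing that for $d$ as small as $8$ (resp.\ $11$) there is enough room to split $d$ among hyperplanes so that each triple (resp.\ sextuple) point is cut out with the prescribed multiplicity while no common line survives. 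Once that is in place, \cite[Corollary 4.8]{BDP1} closes the argument.
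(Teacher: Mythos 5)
Your overall route is the same as the paper's: reduce to the largest admissible $r$, pad with general simple points using the virtual-dimension inequalities, collide via Proposition \ref{pro:collisionetripliinp4}, and finish with \cite[Corollary 4.8]{BDP1}. The first part of your argument matches the paper's step for step. In the second part, however, the bookkeeping does not close, and the moment you wrote ``wait, more carefully'' you had found the real problem and then talked yourself out of it: each application of Proposition \ref{pro:collisionetripliinp4} consumes exactly $6$ triple points and $36$ simple points, so six applications consume $6\cdot 6=36$ triple points and $6\cdot 36=216$ simple points. The factorization $66=6\cdot 11$ is irrelevant; after six collisions, $30$ triple points remain untouched, and the degenerated system is $L_{4,d}(6^6,3^{30})$, not $L_{4,d}(6^6)$. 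As written, your reduction establishes the claim only for $r\le 36$ (the paper's text states the same degeneration, so this is a point where the written argument needs repair rather than a divergence from it). To reach $r=66$ you would need either a further degeneration absorbing the remaining $30$ triple points or a direct treatment of $L_{4,d}(6^6,3^{30})$, a system with $36$ base points, which is far outside the range ($\le n+2=6$ points) in which \cite[Corollary 4.8]{BDP1} is invoked everywhere else in this paper.

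A second, smaller issue: you rightly single out the ``$0$-dimensional base locus'' verification as the hands-on step, but at the boundary degrees the claim is literally false. For $d=8$ the line joining the sextuple point to a triple point meets $L_{4,8}(6,3^5)$ in degree $8-6-3=-1$, and for $d=11$ the line through two sextuple points meets $L_{4,11}(6^6)$ in degree $11-6-6=-1$; these lines therefore lie in the base locus. What should save the argument is that they are simple base lines ($m_i+m_j-d=1$), which in the framework of \cite{BDP1} do not produce speciality, so the correct appeal is to the linear non-speciality statement for few points rather than to an actually finite base locus. Spelling this out, and fixing the count of triple points in the second collision, are the two places where your proposal (and the proof it reproduces) needs work.
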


Actually, something stronger holds. Proposition \ref{pro:collisionetripliinp4} can be generalized, by proving that the collision of $n+2$ triple points and a bunch of simple points in $\p^n$ give a point of multiplicity 6. Thus we can repeat the argument of Proposition \ref{pro:interpolazioneinp4} to show that $ L_{n,8}(3^{2n+3})$ is non-special. In a similar fashion, $ L_{4,11}(4^{11})$ is non-special. However, these linear system have a large virtual dimension, so we feel that the most interesting results are the ones stated in Proposition \ref{pro:interpolazioneinp4}.


\begin{thebibliography}{9999999}
	\bibitem{AH} J. Alexander, A. Hirschowitz, \textit{The blown-up Horace method: application	to fourth-order interpolation},  Invent. Math.  {\bf 107}  (1992),  no. 3, 585--602.
\bibitem{BB} E. Ballico, M. C. Brambilla, \textit{Postulation of general quartuple fat point schemes in $\p^3$}, J. Pure Appl. Algebra {\bf 213} (2009), no. 6, 1002-–1012.
	\bibitem{BBCS} E. Ballico, M. C. Brambilla, F. Caruso, M. Sala, \textit{Postulation of general quintuple fat point schemes in $\p^3$}, J. Algebra {\bf 363} (2012), 113–-139.
	\bibitem{Boc1} C. Bocci, \textit{Special effect varieties in higher dimension}, Collectanea Mathematica {\bf 56} (2005), n. 3, 299--326.
	\bibitem{BDP1} M. C. Brambilla, O. Dumitrescu, E. Postinghel, \textit{On a notion of speciality on linear system of $\p^n$}, Transactions of the American Mathematical Society {\bf 367} (2015), n. 8, 5447-5473.
	\bibitem{BDP2} M. C. Brambilla, O. Dumitrescu, E. Postinghel, \textit{On linear systems of $\p^3$ with nine base points}, Annali di Matematica Pura ed Applicata {\bf 195} (2016) n. 5, 1551-1574.
	\bibitem{CGG} M. V. Catalisano, A. V. Geramita, A. Gimigliano. \textit{Higher secant varieties of Segre-Veronese varieties}, in Projective varieties with unexpected properties, Walter de Gruyter GmbH and Co. KG, Berlin (2005), 81-–107.
	\bibitem{Cil} C. Ciliberto, \textit{Geometric aspects of polynomial interpolation in more variables and of Waring's problem}, Proceedings of the European Congress of Mathematics {\bf 1}, Barcelona (2000), Progress in Math. {\bf 201}, Birkh\"{a}user, Basel (2001), 289-316.
	\bibitem{CM1} C. Ciliberto, R. Miranda, \textit{Degenerations of planar linear systems}, Jurnal f\"{u}r die reine und angewandte Math. {\bf 501} (1998), 191-220.
	\bibitem{CM3} C. Ciliberto, R. Miranda, \textit{Matching conditions for degenerating plane curves and applications}, in Projective varieties with unexpected properties, Walter de Gruyter GmbH and Co. KG, Berlin (2005), 177--197.
	\bibitem{DL} C. De Volder, A. Laface, \textit{On linear systems of $\p^3$ through multiple points}, J. Algebra {\bf 310} (2007), no. 1, 207-–217.
\bibitem{Du} M. Dumnicki, \textit{Cutting diagram method for systems of plane curves with base points}, Ann. Polon. Math. {\bf 90} (2007), 131–-143.
\bibitem{DJ} M. Dumnicki, W. Jarnicki, \textit{New effective bounds on the dimension of a linear system in $\p^2$}, J. Symbolic Comput. {\bf 42} (2007), 621-–635.
\bibitem{EisenbudHarris} D. Eisenbud, J. Harris, \textit{3264 and all that}, Cambridge University Press, Cambridge (2016).
\bibitem{EvainIdea} L. Evain, \textit{Calculs de dimensions de systèmes linéaires de courbes planes par collisions de gros points}, C. R. Acad. Sci. Paris Sér. I Math. \textbf{325} (1997), n. 12, 1305-–1308.
	\bibitem{EvainSHGH} L. Evain, \textit{La fonction de Hilbert de la reunion de $4^h$ points generiques de $\p^2$ de	m\^{e}me multiplicit\'{e}}, J. of Alg. Geom. \textbf{8} (1999), n. 4, 787-–796.
	\bibitem{GM} F. Galuppi, M. Mella, \emph{Identifiability of homogeneous polynomials and Cremona Transformations}, Journal f\"ur die reine und angewandte Mathematik (2017), DOI 10.1515/crelle-2017-0043.

	\bibitem{LU} A. Laface, L. Ugaglia, \textit{On a class of special linear systems on $\p^3$}, Trans. Amer. Math. Soc. {\bf 358} (2006), no. 12, 5485–-5500.
	\bibitem{Ne} M. Nesci, \textit{Collisions of fat points}, PhD thesis, Universit\`a Roma III (2009).
	\bibitem{Roé} J. Ro\'{e}, \textit{Maximal rank for schemes of small multiplicity by Evain's differential Horace method}, Trans. Amer. Math. Soc. {\bf 366} (2014), 857–-874. 
\end{thebibliography}
\end{document}